\def\lijntje{\vrule height2.4pt depth-2pt width0.5in}
\def\vlijntje{\vrule height0.45in depth0.4pt width0.4pt}
\def\vlijn{\buildrel {\hbox to 0pt{\hss$\textstyle\circ$\hss}}\over\vlijntje}
\def\dlijntje{{\vrule height2pt depth-1.6pt
		width0.5in}\llap{\vrule height4pt depth-3.6pt width0.5in}}
\def\tlijntje{{\vrule height1.7pt depth-1.3pt
		width0.5in}\llap{\vrule height3.0pt depth-2.6pt width0.5in}\llap{\vrule height4.3pt depth-3.9pt width0.5in}
}
\def\vtriple#1\over#2\over#3{\mathrel{\mathop{\kern0pt #2}\limits_{\hbox
			to 0pt{\hss$#1$\hss}}^{\hbox to 0pt{\hss$#3$\hss}}}}
\def\rvtriple#1\over#2\over#3{\mathrel{\mathop{\kern0pt #2}\limits_{\hbox
			to 0pt{\hss$#3$\hss}}^{\hbox to 0pt{\hss$#1$\hss}}}}
\def\Ct{\vtriple{\scriptstyle 2}\over\circ\over{}
	\kern-1pt\lijntje\kern-1pt\vtriple{\scriptstyle 1}\over\circ\over{}
	\kern-4pt{\dlijntje \kern -25pt<}\kern8pt
	\vtriple{\scriptstyle 0}\over\circ\over{}\kern-1pt
}
\def\Bt{\vtriple{\scriptstyle 2}\over\circ\over{}
	\kern-1pt\lijntje\kern-1pt\vtriple{\scriptstyle 1}\over\circ\over{}
	\kern-4pt{\dlijntje \kern -25pt>}\kern8pt
	\vtriple{\scriptstyle 0}\over\circ\over{}\kern-1pt}
\def\ddA{{\rm A}}
\def\ddD{{\rm D}}
\def\det{{\rm det}}
\def\ddB{{\rm B}}
\def\ddC{{\rm C}}
\def\ddD{{\rm D}}
\def\ddG{{\rm G}}
\newcommand{\C}{\mathbb C}
\newcommand{\N}{\mathbb N}
\newcommand{\Z}{\mathbb Z}
\def\Dm{\vtriple{\scriptstyle n+1}\over\circ\over{}\kern-1pt\lijntje\kern-1pt
	\vtriple{\scriptstyle{n}}\over\circ\over{}
	\cdots\cdots\vtriple{\scriptstyle 4}\over\circ\over{}\kern-1pt\lijntje\kern-1pt
	\vtriple{\scriptstyle 3}\over\circ\over{\buildrel
		{\scriptstyle 2}\over\vlijn}\kern-1pt\lijntje\kern-1pt
	\vtriple{1}\over\circ\over{}\kern-1pt}
\def\Dn{\vtriple{\scriptstyle n}\over\circ\over{}\kern-1pt\lijntje\kern-1pt
	\vtriple{\scriptstyle{n-1}}\over\circ\over{}
	\cdots\cdots\vtriple{\scriptstyle 4}\over\circ\over{}\kern-1pt\lijntje\kern-1pt
	\vtriple{\scriptstyle 3}\over\circ\over{\buildrel
		{\scriptstyle 2}\over\vlijn}\kern-1pt\lijntje\kern-1pt
	\vtriple{1}\over\circ\over{}\kern-1pt}
\def\En{\vtriple{\scriptstyle n}\over\circ\over{}\kern-1pt\lijntje\kern-1pt
	\vtriple{\scriptstyle{n-1}}\over\circ\over{}
	\cdots\cdots\vtriple{\scriptstyle 5}\over\circ\over{}\kern-1pt\lijntje\kern-1pt
	\vtriple{\scriptstyle 4}\over\circ\over{\buildrel
		{\scriptstyle 2}\over\vlijn}\kern-1pt\lijntje\kern-1pt
	\vtriple{\scriptstyle 3}\over\circ\over{}\kern-1pt\lijntje\kern-1pt
	\vtriple{\scriptstyle 1}\over\circ\over{}\kern-1pt}
\def\An{\vtriple{\scriptstyle n}\over\circ\over{}\kern-1pt\lijntje\kern-1pt
	\vtriple{\scriptstyle{n-1}}\over\circ\over{}\kern-1pt\lijntje\kern-1pt
	\vtriple{\scriptstyle n-2}\over\circ\over{}
	\cdots\cdots
	\vtriple{\scriptstyle 2}\over\circ\over{}\kern-1pt\lijntje\kern-1pt
	\vtriple{\scriptstyle 1}\over\circ\over{}\kern-1pt}
\def\Cn{\vtriple{\scriptstyle n-1}\over\circ\over{}
	\kern-1pt\lijntje\kern-1pt\vtriple{\scriptstyle{n-2}}\over\circ\over{}
	\cdots\cdots
	\vtriple{\scriptstyle 2}\over\circ\over{}
	\kern-1pt\lijntje\kern-1pt\vtriple{\scriptstyle 1}\over\circ\over{}
	\kern-4pt{\dlijntje \kern -25pt<}\kern10pt
	\vtriple{\scriptstyle 0}\over\circ\over{}\kern-1pt}
\def\Ct{\vtriple{\scriptstyle 2}\over\circ\over{}
	\kern-1pt\lijntje\kern-1pt\vtriple{\scriptstyle 1}\over\circ\over{}
	\kern-4pt{\dlijntje \kern -25pt<}\kern12pt
	\vtriple{\scriptstyle 0}\over\circ\over{}\kern-1pt
}
\def\Bn{\vtriple{\scriptstyle n-1}\over\circ\over{}
	\kern-1pt\lijntje\kern-1pt\vtriple{\scriptstyle{n-2}}\over\circ\over{}
	\cdots\cdots
	\vtriple{\scriptstyle 2}\over\circ\over{}
	\kern-1pt\lijntje\kern-1pt\vtriple{\scriptstyle 1}\over\circ\over{}
	\kern-4pt{\dlijntje \kern -25pt>}\kern10pt
	\vtriple{\scriptstyle 0}\over\circ\over{}\kern-1pt}
\def\Bt{\vtriple{\scriptstyle 2}\over\circ\over{}
	\kern-1pt\lijntje\kern-1pt\vtriple{\scriptstyle 1}\over\circ\over{}
	\kern-4pt{\dlijntje \kern -25pt>}\kern12pt
	\vtriple{\scriptstyle 0}\over\circ\over{}\kern-1pt}
\def\Es{\vtriple{\scriptstyle 6}\over\circ\over{}\kern-1pt\lijntje\kern-1pt
	\vtriple{\scriptstyle 5}\over\circ\over{}\kern-1pt\lijntje\kern-1pt
	\vtriple{\scriptstyle 4}\over\circ\over{\buildrel
		{\scriptstyle 2}\over\vlijn}\kern-1pt\lijntje\kern-1pt
	\vtriple{3}\over\circ\over{}\kern-1pt\lijntje\kern-1pt
	\vtriple{\scriptstyle 1}\over\circ\over{}\kern-1pt}
\def\Ff{
	\vtriple{\scriptstyle 1}\over\circ\over{}
	\kern-1pt\lijntje\kern-1pt\vtriple{\scriptstyle 2}\over\circ\over{}
	\kern-4pt{\dlijntje \kern -25pt<}\kern10pt
	\vtriple{\scriptstyle 3}\over\circ\over{}\kern-1pt\lijntje\kern-1pt
	\vtriple{\scriptstyle 4}\over\circ\over{}
	\kern-1pt}
\def\Ht{
	\vtriple{\scriptstyle 1}\over\circ\over{}
	\kern-1pt\overset{5}{\lijntje}\kern-1pt\vtriple{\scriptstyle 2}\over\circ\over{}
	\kern-1pt\lijntje\kern-1pt
	\vtriple{\scriptstyle 3}\over\circ\over{}\kern-1pt}
\def\Hf{
	\vtriple{\scriptstyle 1}\over\circ\over{}
	\kern-1pt\overset{5}{\lijntje}\kern-1pt\vtriple{\scriptstyle 2}\over\circ\over{}
	\kern-1pt\lijntje\kern-1pt
	\vtriple{\scriptstyle 3}\over\circ\over{}\kern-1pt\lijntje\kern-1pt
	\vtriple{\scriptstyle 4}\over\circ\over{}
	\kern-1pt}
\def\In{
	\vtriple{\scriptstyle 0}\over\circ\over{}
	\kern-1pt\overset{n}{\lijntje}\kern-1pt\vtriple{\scriptstyle 1}\over\circ\over{}
	\kern-1pt}
\def\Gt{
	\vtriple{\scriptstyle 0}\over\circ\over{}
	\kern-4pt{\tlijntje\kern -25pt<}\kern 10pt\vtriple{\scriptstyle 1}\over\circ\over{}
	\kern-1pt}
\def\EBn{\vtriple{\scriptstyle n-1}\over\circ\over{}
	\kern-1pt\lijntje\kern-1pt\vtriple{\scriptstyle{n-2}}\over\circ\over{\buildrel
		{\scriptstyle -1}\over\vlijn}\cdots\cdots
	\vtriple{\scriptstyle 2}\over\circ\over{}
	\kern-1pt\lijntje\kern-1pt\vtriple{\scriptstyle 1}\over\circ\over{}
	\kern-4pt{\dlijntje \kern -25pt<}\kern8pt
	\vtriple{\scriptstyle 0}\over\circ\over{}\kern-1pt}
\def\Cn{\vtriple{\scriptstyle n-1}\over\circ\over{}
	\kern-1pt\lijntje\kern-1pt\vtriple{\scriptstyle{n-2}}\over\circ\over{}
	\cdots\cdots
	\vtriple{\scriptstyle 2}\over\circ\over{}
	\kern-1pt\lijntje\kern-1pt\vtriple{\scriptstyle 1}\over\circ\over{}
	\kern-4pt{\dlijntje \kern -25pt<}\kern10pt
	\vtriple{\scriptstyle 0}\over\circ\over{}\kern-1pt}
\def\ECn{\vtriple{\scriptstyle -2}\over\circ\over{}
	\kern-4pt{\dlijntje \kern -25pt>}\kern8pt\vtriple{\scriptstyle n-1}\over\circ\over{}
	\kern-1pt\lijntje\kern-1pt\vtriple{\scriptstyle{n-2}}\over\circ\over{}
	\cdots\cdots
	\vtriple{\scriptstyle 2}\over\circ\over{}
	\kern-1pt\lijntje\kern-1pt\vtriple{\scriptstyle 1}\over\circ\over{}
	\kern-4pt{\dlijntje \kern -25pt<}\kern12pt
	\vtriple{\scriptstyle 0}\over\circ\over{}\kern-1pt}
\def\Fo{\vtriple{\scriptstyle -1}\over\circ\over{}
	\kern-1pt\lijntje\kern-1pt
	\vtriple{\scriptstyle 1}\over\circ\over{}
	\kern-1pt\lijntje\kern-1pt\vtriple{\scriptstyle 2}\over\circ\over{}
	\kern-4pt{\dlijntje \kern -25pt<}\kern8pt
	\vtriple{\scriptstyle 3}\over\circ\over{}\kern-1pt\lijntje\kern-1pt
	\vtriple{\scriptstyle 4}\over\circ\over{}
	\kern-1pt}
\def\Ft{
	\vtriple{\scriptstyle 1}\over\circ\over{}
	\kern-1pt\lijntje\kern-1pt\vtriple{\scriptstyle 2}\over\circ\over{}
	\kern-4pt{\dlijntje \kern -25pt<}\kern8pt
	\vtriple{\scriptstyle 3}\over\circ\over{}\kern-1pt\lijntje\kern-1pt
	\vtriple{\scriptstyle 4}\over\circ\over{}
	\kern-1pt\lijntje\kern-1pt
	\vtriple{\scriptstyle -2}\over\circ\over{}
	\kern-1pt}
\def\Go{\vtriple{\scriptstyle -1}\over\circ\over{}
	\kern-1pt\lijntje\kern-1pt
	\vtriple{\scriptstyle 0}\over\circ\over{}
	\kern-4pt{\tlijntje\kern -25pt<}\kern 12pt\vtriple{\scriptstyle 1}\over\circ\over{}
	\kern-1pt}
\def\Gf{
	\vtriple{\scriptstyle 0}\over\circ\over{}
	\kern-4pt{\tlijntje\kern -25pt<}\kern 12pt\vtriple{\scriptstyle 1}\over\circ\over{}
	\kern-1pt\lijntje\kern-1pt
	\vtriple{\scriptstyle -2}\over\circ\over{}
	\kern-1pt}
\numberwithin{equation}{section}
\newtheorem{lemma}{Lemma}[section]
\newtheorem{prop}[lemma]{Proposition}
\newtheorem{thm}[lemma]{Theorem}
\newtheorem{example}[lemma]{Example}
\theoremstyle{remark}
\newtheorem{rem}[lemma]{Remark}
\theoremstyle{definition}
\newtheorem{defn}[lemma]{Definition}
\newtheorem{conj}{Conjecture}[section]
\begin{document}
	\title{Characteristic polynomials for  classical Lie algebras }
	\author{ Chenyue Feng, Shoumin Liu\footnote{The author is  funded by the NSFC (Grant No. 11971181, Grant No.12271298) },   Xumin Wang}
	\date{}
	\maketitle

	\begin{abstract}
		In this paper,  we will compute the characteristic polynomials for finite dimensional representations of classical complex Lie algebras and the exceptional Lie algebra of type $\ddG_2$, which can be obtained through
		the orbits of  integral weights under the action of their corresponding Weyl groups  and the invariant polynomial theory of the Weyl groups. We show that
        the characteristic polynomials can be decomposed into products of irreducible orbital factors, each of which is invariant under the action of their corresponding Weyl groups.
		
		\par\textbf{Keywords:} \ Lie algebra; Characteristic polynomials; Finite dimensional representations; Monoid; Symmetric polynomials; Invariant polynomials	
		\par\textbf{MSC:} \ 17B05 \ 17B10
	\end{abstract}

	\section{Introduction}

	\hspace{1.5em}Given a finite group $G = \{1, g_1, \cdots, g_n\}$ with the left regular representation $\lambda_G$ of $G$, Dedekind studied the determinant
	$$f_{\lambda_G}(z) = det(z_0I + z_1\lambda_G(g_1) + \cdots + z_n\lambda_G(g_n))$$
	for some groups and  its  factorizations.
	It is natural that  one may define the corresponding polynomial $f_{\pi}(z)$ associated to a general finite dimensional representation $\pi$ of $G$. Let $\hat{G}$ be the set of equivalence classes of irreducible unitary representations of $G$, and $d_{\pi}$ be the dimension of the representation $\pi$. Later in 1896 Frobenius proved the following theorem.
	
	\begin{thm}\label{0}
		If $G = \{1, g_1, \cdots, g_n\}$ is a finite group, then
		$$f_{\lambda_G}(z)=\prod_{\pi\in\hat{G}}(f_{\pi}(z))^{d_\pi}$$
		Moreover, each $f_{\pi}(z)$ is an irreducible polynomial of degree $d_{\pi}$.
	\end{thm}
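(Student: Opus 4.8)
The plan is to split the statement into its two assertions and reduce each to a classical structural fact: the decomposition of the regular representation for the factorization, and Burnside's density theorem together with the irreducibility of the generic determinant for the irreducibility claim. Throughout put $g_0 = 1$, so that $f_{\lambda_G}(z) = \det\bigl(\sum_{i=0}^{n} z_i \lambda_G(g_i)\bigr)$ with $z = (z_0, z_1, \ldots, z_n)$, and note first that for any finite dimensional representation $\rho$ the polynomial $f_\rho$ is independent of the chosen matrix realization, since a simultaneous conjugation of the $\rho(g_i)$ conjugates $\sum_i z_i \rho(g_i)$ and leaves its determinant unchanged. By Maschke's theorem the left regular representation decomposes as $\lambda_G \cong \bigoplus_{\pi \in \hat G} \pi^{\oplus d_\pi}$; in an adapted basis every matrix $\sum_i z_i \lambda_G(g_i)$ is block diagonal, and multiplicativity of $\det$ on block diagonal matrices yields $f_{\rho_1 \oplus \rho_2} = f_{\rho_1} f_{\rho_2}$ in general, hence
$$ f_{\lambda_G}(z) = \prod_{\pi \in \hat G} f_\pi(z)^{d_\pi}. $$
Each $f_\pi$ is the determinant of a $d_\pi \times d_\pi$ matrix with entries linear in $z$, so it is homogeneous of degree $d_\pi$, and it is nonzero because the monomial $z_0^{d_\pi}$ occurs with coefficient $\det(I) = 1$ (using $\pi(g_0) = I$); the identity $\sum_\pi d_\pi^2 = |G| = \deg f_{\lambda_G}$ is then automatic.

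For irreducibility, fix $\pi \in \hat G$ and consider the linear map $\Phi \colon \C^{n+1} \to M_{d_\pi}(\C)$, $z \mapsto \sum_{i=0}^{n} z_i \pi(g_i)$, so that $f_\pi = D \circ \Phi$ where $D \in \C[x_{k\ell} : 1 \le k, \ell \le d_\pi]$ is the generic determinant polynomial. Because $\pi$ is absolutely irreducible, Burnside's theorem gives that the $\pi(g_i)$ span $M_{d_\pi}(\C)$, i.e.\ $\Phi$ is surjective. The claim then follows from two facts. First, $D$ is irreducible; I would prove this by induction on $d_\pi$ via cofactor expansion along the first row: in any nontrivial factorization one factor must be free of all first-row variables (since $D$ has degree one in each of them), and it would then divide every first-row cofactor, which contradicts the inductive irreducibility of the $(d_\pi - 1) \times (d_\pi - 1)$ minors unless the factor is constant. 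Second, the pullback of an irreducible polynomial along a surjective linear map is irreducible: writing $\C^{n+1} = \ker \Phi \oplus V'$ with $\Phi|_{V'}$ an isomorphism, in the induced coordinates $f_\pi$ is literally $D$, regarded as a polynomial in the $V'$-coordinates over the polynomial ring in the $\ker \Phi$-coordinates, and a prime element of a UFD remains prime after adjoining indeterminates. Together with the surjectivity of $\Phi$ this shows $f_\pi$ is irreducible of degree $d_\pi$.

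The real content lies entirely in the irreducibility half — the product formula is bookkeeping once the decomposition of the regular representation is in hand. The two points I expect to require care are invoking Burnside's density theorem correctly (one needs surjectivity of $\Phi$, not merely that its image is nonzero) and carrying out the self-contained proof that the generic determinant is irreducible, which is easy to state but somewhat fiddly to write cleanly.
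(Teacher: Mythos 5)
The paper does not prove this theorem: it is stated as Frobenius's 1896 result on the group determinant and cited (with references to \cite{F1896} and the surrounding historical literature) purely as motivation for the Lie-algebra analogues that the paper actually studies, so there is no in-paper argument to compare yours against. That said, your proof is correct and is the standard modern argument: Maschke's decomposition of $\lambda_G$ plus multiplicativity of $\det$ on block-diagonal matrices gives the product formula, and the irreducibility of each $f_\pi$ follows from Burnside's density theorem (surjectivity of $z\mapsto\sum_i z_i\pi(g_i)$ onto $M_{d_\pi}(\C)$, which is where absolute irreducibility is used) combined with the irreducibility of the generic determinant and the fact that an irreducible polynomial pulled back along a surjective linear map stays irreducible. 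Your inductive sketch for the irreducibility of the generic determinant is slightly compressed --- the clean version notes that $D$ is homogeneous of degree one in the first-row variables, so one factor $Q$ is free of them and must divide every cofactor $M_{1\ell}$; since $M_{11}$ is irreducible by induction and $M_{11}$, $M_{12}$ involve different variable sets, $Q$ is a unit --- but this is exactly the fiddliness you flagged, and the argument goes through. Note only that this route is anachronistic relative to Frobenius's original proof, which predates Burnside's theorem; as a self-contained verification of the stated theorem it is complete.
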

	
The polynomial $f_{\pi}(z)$ was later called the group determinant of $G$ and many related
works by Dedekind and Frobenius are inspired by group representation theory.	
We refer to \cite{Cu2000,D1969,Di1902,Di1921,Di1975,F1896} for some illustrations of this topic.
However, the characteristic polynomials for several general matrices are a new frontier in linear algebra. In \cite{Y2009}, the notion of projective spectrum of operators is defined by  Yang through the multiparameter pencil, and  multivariable homogeneous characteristic polynomials have been studied there.
Fruitful results have been obtained in \cite{CMK2016, GY2017, GR2014, HY2023, HY2024,  HY2018, KY2021}.
For several matrices $A_1, \dots, A_n$ of equal size, their characteristic polynomial is defined as
	$$f_A(z)=det(z_0I+z_1A_1+\cdots+z_nA_n), \quad z=(z_0, \dots, z_n)\in \C^{n+1}.$$
	This generalization of characteristic polynomials contains a lot of information about this linear pencil of matrices. For example, the polynomials $f_A(z)$ is irreducible, then $A_1, \dots, A_n$ have no nontrivial common invariant subspace. Several normal matrices pairwisely commute if and only if $f_A(z)$ is a product of linear factors. Two pairs of projection matrices are unitarily equivalent if and only if their characteristic polynomials coincide, and so on. We refer the reader to the
	paper \cite{Y2024} for these and more conclusions. Without any doubt, linear pencils of matrices play an important role in a wide range of fields in matrix theory, operator theory, representation theory of groups and Lie algebras.
	
	It is natural to consider similar topics for finite dimensional Lie algebras.
	For a Lie algebra $\mathfrak{g}$ with a basis
	$\{x_1,\dots,x_n\}$, the characteristic polynomial of its adjoint representation
	$$f_{\mathfrak{g}}=\mathrm{det}(z_0I + z_{1}\mathrm{ad}x_1 + \cdots + z_{n}\mathrm{ad}x_n)$$ is investigated in \cite{KY2021}. It is shown that $f_\mathfrak{g}$ is invariant under the automorphism group $Aut(\mathfrak{g})$.
	Let $\phi:\mathfrak{sl}(2, \C)\rightarrow \mathfrak{gl}(V)$ be an irreducible representation of  $\mathfrak{sl}(2, \C)$ of dimension $m+1$. The characteristic polynomial
	\begin{eqnarray}\label{fpi}
		f_\phi(z_0,z_1,z_2,z_3)=\begin{cases}
			z_0\prod_{l=1}^{m/2}\left(z_0^2-4l^2(z_1^2+z_2z_3)\right) \quad 2\mid m\\
			\\
			\prod_{l=0}^{(m-1)/2}\left(z_0^2-(2l+1)^2(z_1^2+z_2z_3)\right)\quad 2\nmid m
		\end{cases}
	\end{eqnarray}
	is obtained in \cite{CCD2019, HZ2019, H2021}.
	When  the homomorphism $\phi$ is an arbitrary finite dimensional representation of  $\mathfrak{sl}(2, \C)$, we
	 denote the dimension of the eigenspace of $\phi(h)$ for the eigenvalue $n$ by $d_{n,\phi}$, $n\in \Z$, with \begin{equation*}
	 h=\left(\begin{array}{cc}
	 	1 & 0 \\
	 	0 & -1
	 \end{array}\right).
	 \end{equation*}
	
	 The characteristic polynomial
	\begin{equation}\label{FIRST}
		f_{\phi}(z_0,z_1,z_2,z_3)=z_0^{d_{0,\phi}}\prod_{n\geq 1}\left(z_0^2-n^2(z_1^2+z_2z_3)\right)^{d_{n,\phi}}
	\end{equation}
	is obtained in \cite{JL2022},
	where the authors prove that there is a correspondence between finite dimensional representations of $\mathfrak{sl}(2, \C)$ and their characteristic polynomials. Similar results hold for simple Lie algebra by the correspondence between characteristic polynomials and the linearized polynomials, meanwhile authors defined a monoid structure on these characteristic polynomials related to the tensor products of the representations. For more information, we refer the reader to \cite{GLW2024}.
	In \cite{KKSW}, authors gave a characterization of the linearization of characteristic polynomials which is independent of prescribed representations.
	
	In 2023, de Graaf put forward the following conjecture on generalised characteristic
	polynomials of  complex simple Lie algebras, which is abbreviated as the
	``\textbf{de Graaf's Conjecture}" formed in the below.
	\begin{conj}
	Let $\mu_1, \dots, \mu_s$ be the different dominant weights of irreducible $\mathfrak{g}$-module $V$ and $k_i$ be the size of the orbit of $\mu_i$ under the $\mathscr{W}$, $i=1, \dots, s$. Then
	$$f_{\phi}=\prod_{\mu_i\in\Pi(\lambda)}(f_{\mu_i})^{m_\lambda(\mu_i)},$$
	 where $f_{\mu_i}$ is an irreducible polynomial of degree $k_i=\lvert O_{\mu_i} \rvert$ obtained from the orbit of $\mu_i$,  $m_\lambda(\mu_{i})$ is the multiplicity of $\mu_{i}$ in $V$.
	\end{conj}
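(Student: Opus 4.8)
The plan is to reduce the computation of $f_\phi$ to the Cartan subalgebra, to recover the orbital factors via the Chevalley restriction theorem, and to deduce their irreducibility from the transitivity of $\mathscr{W}$ on a single orbit. Fix a Cartan subalgebra $\mathfrak{h}\subseteq\mathfrak{g}$ with root system $\Phi$, and take the basis $\{x_1,\dots,x_n\}$ of $\mathfrak{g}$ to consist of a basis of $\mathfrak{h}$ together with root vectors $e_\alpha$; the irreducible factorization of $f_\phi$ is unaffected by this choice, since a different basis merely substitutes the $z_i$ by an invertible linear change. Let $G$ denote the connected adjoint group of $\mathfrak{g}$, acting on $\mathfrak{g}$ and hence on $\C[z_0,\dots,z_n]=\C[z_0]\otimes\C[\mathfrak{g}]$ (trivially on $z_0$). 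Since $f_\phi(z_0,x)=\det\!\big(z_0 I+\phi(x)\big)$ with $x=\sum_i z_i x_i$ satisfies $f_\phi(z_0,\mathrm{Ad}(g)x)=f_\phi(z_0,x)$, it is $G$-invariant. Setting the root coordinates to zero restricts $f_\phi$ to $\C\times\mathfrak{h}$; in a weight basis of $V$ the operator $z_0I+\phi(h)$ is diagonal with eigenvalue $z_0+\mu(h)$ on $V_\mu$, so $f_\phi|_{\C\times\mathfrak{h}}=\prod_{\mu}\big(z_0+\mu(h)\big)^{m_\lambda(\mu)}$. As weight multiplicities are constant on $\mathscr{W}$-orbits, regrouping gives $f_\phi|_{\C\times\mathfrak{h}}=\prod_{i=1}^{s}g_{\mu_i}^{\,m_\lambda(\mu_i)}$, where $g_{\mu_i}(z_0,h):=\prod_{\nu\in O_{\mu_i}}\big(z_0+\nu(h)\big)$ is $\mathscr{W}$-invariant and homogeneous of degree $k_i=|O_{\mu_i}|$.

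Next I would build the polynomials $f_{\mu_i}$. The Chevalley restriction theorem says the restriction map $\C[\mathfrak{g}]^{G}\to\C[\mathfrak{h}]^{\mathscr{W}}$ is an isomorphism of graded algebras; tensoring with $\C[z_0]$ it gives an isomorphism $\C[\C\times\mathfrak{g}]^{G}\cong\C[\C\times\mathfrak{h}]^{\mathscr{W}}$. Let $f_{\mu_i}$ be the $G$-invariant preimage of $g_{\mu_i}$; it is homogeneous of degree $k_i$ with $f_{\mu_i}|_{\C\times\mathfrak{h}}=g_{\mu_i}$. Then $\prod_i f_{\mu_i}^{\,m_\lambda(\mu_i)}$ is $G$-invariant and agrees with $f_\phi$ on $\C\times\mathfrak{h}$; since $G\cdot\mathfrak{h}$ is Zariski dense in $\mathfrak{g}$ (the injectivity half of Chevalley restriction), the two coincide, which already yields $f_\phi=\prod_{\mu_i\in\Pi(\lambda)}f_{\mu_i}^{\,m_\lambda(\mu_i)}$. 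Explicitly, $f_{\mu_i}$ is obtained by writing $g_{\mu_i}$ in terms of $z_0$ and the fundamental $\mathscr{W}$-invariants of $\mathfrak{h}$ and then substituting the basic $\mathrm{ad}$-invariants of $\mathfrak{g}$ — this is where the invariant theory of the Weyl group enters.

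It remains to show each $f_{\mu_i}$ is irreducible (the case $\mu_i=0$ being trivial, as $f_{0}=z_0$). Factor $f_{\mu_i}=c\prod_{j}p_j^{a_j}$ into distinct primes in the UFD $\C[z_0,\dots,z_n]$. Since $G$ is connected it fixes each prime factor up to a scalar, and since the semisimple group $G$ has no nontrivial characters, each $p_j$ is actually $G$-invariant; hence each restriction $p_j|_{\mathfrak{h}}$ is $\mathscr{W}$-invariant. Restricting the factorization to $\C\times\mathfrak{h}$ yields $g_{\mu_i}=c\prod_j(p_j|_{\mathfrak{h}})^{a_j}$. But $g_{\mu_i}$ is a product of pairwise non-associate linear primes $z_0+\nu(h)$, hence square-free, which forces all $a_j=1$ and (since a $G$-invariant polynomial restricting to a constant on $\mathfrak{h}$ is constant) each $p_j|_{\mathfrak{h}}=\prod_{\nu\in S_j}\big(z_0+\nu(h)\big)$ for a partition $O_{\mu_i}=\bigsqcup_j S_j$ into nonempty parts. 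Now $\mathscr{W}$ permutes the linear forms $\{\,z_0+\nu(h):\nu\in O_{\mu_i}\,\}$ transitively because it is transitive on $O_{\mu_i}$, and $\mathscr{W}$-invariance of $p_j|_{\mathfrak{h}}$ forces $S_j$ to be $\mathscr{W}$-stable, hence $S_j=O_{\mu_i}$. So there is a single prime factor and $f_{\mu_i}$ is irreducible of degree $k_i$.

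The reduction and the irreducibility argument are uniform; the work special to the classical types and to $\ddG_2$ is producing the orbital factors in closed form. For each of $\ddA_\ell$, $\ddB_\ell$, $\ddC_\ell$, $\ddD_\ell$ and $\ddG_2$ I would fix the standard coordinates $\veps_1,\veps_2,\dots$ on $\mathfrak{h}^*$, describe the $\mathscr{W}$-orbit of a dominant weight combinatorially (permutations in type $\ddA$, signed permutations in types $\ddB$ and $\ddC$, even-signed permutations in type $\ddD$, the dihedral action for $\ddG_2$), expand $g_{\mu_i}=\prod_{\nu\in O_{\mu_i}}(z_0+\nu(h))$ in the elementary-symmetric or power-sum invariants, lift these to the basic invariants of $\mathfrak{g}$, and combine with Freudenthal's multiplicity formula to obtain $f_\phi$. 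I expect the most delicate bookkeeping in type $\ddD$: there $\mathscr{W}$ is only the even-signed permutation group, so orbits of weights lying on walls — notably the two half-spin families — split differently from types $\ddB,\ddC$ and must be treated separately, and the explicit Chevalley lift requires the Pfaffian in addition to the power sums. The conceptual heart, however, is the density/Chevalley-restriction reduction of the first two paragraphs together with the transitivity argument of the third.
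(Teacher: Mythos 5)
Your argument is correct, and it reaches the conclusion by a route that differs from the paper's in substance, not just in packaging. The factorization step is the same underlying idea — both proofs exploit that semisimple elements are dense, restrict to the Cartan subalgebra where $f_{\phi}$ splits into linear forms $z_0+\nu(h)$ with multiplicities constant on $\mathscr{W}$-orbits, and then observe that each orbital product is $\mathscr{W}$-invariant — but the paper carries this out type by type (for $\mathfrak{sl}(n,\C)$, $\ddB_n$, $\ddC_n$, $\ddD_n$, $\ddG_2$), diagonalizing an explicit $L$ with ordered eigenvalues $\theta_1,\dots,\theta_n$ and invoking the fundamental theorem of symmetric (resp.\ signed-symmetric) functions together with Vieta's formulas to express each $f_{\mu}$ in the matrix entries, whereas you invoke the Chevalley restriction theorem $\C[\mathfrak{g}]^{G}\simeq\C[\mathfrak{h}]^{\mathscr{W}}$ and the density of $G\cdot\mathfrak{h}$ once and for all. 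This buys uniformity: your argument proves the conjecture for every complex simple $\mathfrak{g}$, including the exceptional types the paper leaves open, at the cost of not producing the explicit closed forms (in $p,q$ or $\bar{s}_1,\dots,\bar{s}_n,t$) that occupy most of Sections 3 and 4. The irreducibility proofs are genuinely different: the paper (Theorem \ref{3.10}) argues via Galois theory, taking the splitting field of the generic $\det(z_0I-L)$ with Galois group $\mathfrak{S}_n$ and using transitivity of the action on the roots of $f_{\mu}$, while you factor $f_{\mu}$ into primes in $\C[z_0,z_1,\dots]$, use connectedness and the absence of characters of the adjoint group to make each prime factor $G$-invariant, and then let transitivity of $\mathscr{W}$ on $O_{\mu}$ force a single prime. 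Your version is cleaner and sidesteps the somewhat garbled field-theoretic setup in the paper's proof (and extends without change to types $\ddB$, $\ddC$, $\ddD$, $\ddG_2$, where the relevant Galois group would no longer be $\mathfrak{S}_n$); the paper's version, when it works, gives slightly more, namely the Galois group of each $f_{\mu}$ over the coefficient field. Both arguments ultimately rest on the same transitivity of $\mathscr{W}$ on a single orbit.
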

	 This conjecture will be partially proved in this paper mainly  for classical Lie algebras, 
	and the arrangement of this paper is as follows.
	Preliminary results are presented in Section 2 for preparations. In Section 3, we first demonstrate that the characteristic polynomials of the finite dimensional representations of $\mathfrak{sl}(3, \C)$ can be obtained from the orbits of integral weights under the action of the Weyl group of $\mathfrak{sl}(3, \C)$. Subsequently, this conclusion is extended to the characteristic polynomials of the finite dimensional representations of $\mathfrak{sl}(n, \C)$ in section 3. In Section 4, we further generalize this conclusion to the characteristic polynomials on the complex simple Lie algebra of type   $\ddB_{n}$, $\ddC_{n}$, $\ddD_{n}(n\geq 4)$ and $\ddG_2.$ Therefore, the characteristic polynomials of complex simple Lie algebras of the above types are described clearly.
	
	\textbf{Acknowledgments}
	
We would like to appreciate Prof. R.Yang  for Conveying  us the conjectures posed by
 Prof. W.de.Graaf  and many helpful discussions.

	\section{Some preliminary results}
	\hspace{1.3em}We first recall some results for Lie algebras which can be found in \cite{B2002, FH1991, H1972}. Let $\mathfrak{g}$ be a finite dimensional complex simple Lie algebra, $\Phi$
	be the root system of $\mathfrak{g}$ with Weyl group $\mathscr{W}$, and  $\Pi$=\{$\alpha_1$, $\dots$,$\alpha_n$\} be simple roots of $\Phi$. Let ${\mathfrak{h}}$ be a Cartan subalgebra of $\mathfrak{g}$, $\{h_{\alpha_1},\dots,h_{\alpha_n}\}$
	be a basis of ${\mathfrak{h}}$ corresponding to $\Pi$, and $E_{\alpha}$ be the root vector for  each $\alpha\in\Phi$. It is well known that  the set $\mathcal{A}$=$\{h_{\alpha_1},\dots,h_{\alpha_n}, E_{\alpha}(\alpha\in\Phi)\}$
	is a canonical basis of $\mathfrak{g}$, and we have the
	Cartan decomposition $$\mathfrak{g}={\mathfrak{h}}\oplus \sum_{\alpha\in\Phi} \C E_{\alpha}.$$
	Suppose $\phi:\mathfrak{g}\rightarrow \mathfrak{gl}(V)$ is a finite dimensional linear representation of $\mathfrak{g}$, with $V$ being a $\mathfrak{g}$-module.
	\begin{defn}
		The polynomial $$f_{\phi}(z_0,z_1,\dots,z_n,z_\alpha)=\mathrm{det}\left(z_{0}I+\sum_{i=1}^{n}\phi(h_{\alpha_i})z_i+\sum_{\alpha\in\Phi}\phi(E_{\alpha})z_{\alpha}\right)$$
		is called the characteristic polynomial of $\mathfrak{g}$ with respect to the basis $\mathcal{A}$ and the representation $\phi$,
		where $z_{0}$, $z_{1}$, $\dots$, $z_{n}$, $z_\alpha$($\alpha\in\Phi$) are indeterminants.
		Sometimes, let $L=\sum_{i=1}^{n}z_ih_{\alpha_i}+\sum_{\alpha\in\Phi}z_{\alpha}E_{\alpha}$,
		we write
		$$f_{\phi}(z_0,L)=f_{\phi}(z_0,z_1,\dots,z_n,z_\alpha)=\mathrm{det}(z_0I+L).$$
	\end{defn}
	\begin{rem}
		Here we recall the definition of the  formal characters $\Z[\Lambda]$ of a simple Lie algebra $\mathfrak{g}$ from \cite[Section 22.5]{H1972}.
		Let $\Lambda(\mathfrak{g})\subseteq\mathfrak{h}^*$ be the set of integral weight lattice, namely all $\lambda \in \mathfrak{h}^*$ which $\left<\lambda, \alpha\right>\in \Z$($\alpha\in \Phi$).
		 $\Lambda(\mathfrak{g})$ has an integral basis $\{\beta_i\}$ and dominant weight $\mu \in \Lambda(\mathfrak{g})$ can only be written in the following form with respect to the integral basis,
		 $$\mu=\sum_il_i\beta_i$$ with $l_i\in \N$.
		Furthermore we have
		\begin{eqnarray}\label{2.1}
			\left<\beta_i, \alpha_j\right>=\frac{2(\beta_i,\alpha_j)}{(\alpha_j,\alpha_j)}=\beta_i(h_{\alpha_j})=\delta_{ij},\ \
		\gamma_i(\beta_j)=\beta_j-\delta_{ij}\alpha_i	
		\end{eqnarray}
		where $\alpha_j$ is simple root of $\Phi$ and $\gamma_i \in \mathscr{W}$ is the reflection corresponding to $\alpha_i$.
		All of these results can be seen in \cite[Chapter 13]{H1972}.
		Define the stabilizer of $\mu$ under the reflection action of $\mathscr{W}$ as follows
		$$stab(\mu)=\{\gamma \in \mathscr{W} \mid \gamma(\mu)=\mu\}.$$
		Let $\Z[\Lambda]$ be the free abelian group  with $\{e(\eta)\}_{\eta \in \Lambda}$ being its basis.
		From \cite[Section 22.5]{H1972},  the abelian group  $\Z[\Lambda]$ has a commutative ring structure through decreeing that $e(\lambda)e(\eta)=e(\lambda+\eta)$.
		For $\lambda\in\Lambda^{+}$, we suppose that $V(\lambda)$ is an irreducible finite dimensional module of $\mathfrak{g}$ with the highest weight $\lambda$ and
		$\Pi(\lambda)$ denote the set of its weights.
		We define the formal character for $V(\lambda)$ as \begin{equation}\label{form2}
			ch_{\lambda}=\sum_{\eta\in \Pi(\lambda)}m_\lambda(\eta)e(\eta),
		\end{equation}
		where $m_\lambda(\eta)$ is the multiplicity of $\eta$ in $V(\lambda)$, namely the dimension of the eigenspace corresponding to the weight $\eta$. It is known that the
		$m_\lambda(\eta)$s are same for the $\eta$s on the same orbits of the Weyl groups associated to the simple Lie algebra.
		For a general finite dimension representation $V$, if $V(\lambda)=V(\lambda_1)\oplus \cdots V(\lambda_t)$, we define its formal character as $$ch_{V}=\sum_{i=1}^{t}ch_{\lambda_i}.$$
	\end{rem}

	\section{Characteristic polynomials for  $\mathfrak{sl}(n, \C)$ and $\mathfrak{sl}(4, \C)$}\label{sl3}
	\subsection{Cases for the Lie algebra  $\mathfrak{sl}(3, \C)$}
	\hspace{1.3em}Now we focus on the Lie algebra $\mathfrak{sl}(3, \C)$.  Suppose that $E_{i,j}$ be the $3\times 3$ matrix  with $1$ at the entry $(i,j)$, others being $0$.
	Then we know that the  Lie algebra  $\mathfrak{sl}(3, \C)$ has the Cartan decomposition with basis $H_{1}=E_{1,1}-E_{2,2}$, $H_{2}=E_{2,2}-E_{3,3}$,
	and $E_{i,j}$ for $1\leq i\neq j\leq 3$. It is  known that the Lie algebra $\mathfrak{sl}(3, \C)$ is the simple Lie algebras of type $\ddA_2$.
	Let $\mathscr{W}(\ddA_2)$, $\Phi(\ddA_2)$, $\Lambda(\ddA_2)$ be the Weyl group, root system and integral weight lattice associted to  $\mathfrak{sl}(3, \C)$, respectively.
	Suppose that the simple roots of $\Phi(\ddA_2)$ are $\alpha_1$ and $\alpha_2$, then $\Lambda(\ddA_2)$ has the integral basis $$\beta_1=\frac{2\alpha_1+\alpha_2}{3}, \quad \beta_2=\frac{\alpha_1+2\alpha_2}{3}.$$
	\begin{lemma}\label{lem3.1}
		Under the reflection action of  $\mathscr{W}(\ddA_2)$ on $\Phi(\ddA_2)$ , the possible numbers of weights   in each orbit  are $1$, $3$, and $6$.
		Furthermore,  each orbit contains a unique element being a dominant weight  of the form $l_1\beta_1+l_2\beta_2$, with $l_1$, $l_2\in \N$.
	\end{lemma}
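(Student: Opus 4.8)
The plan is to reduce everything to standard properties of the Weyl group $\mathscr{W}(\ddA_2)$. First I would record that $\mathscr{W}(\ddA_2)\cong S_3$: it is generated by the two simple reflections $\gamma_1,\gamma_2$ subject to $\gamma_1^2=\gamma_2^2=e$ and $(\gamma_1\gamma_2)^3=e$, so $|\mathscr{W}(\ddA_2)|=6$. By the orbit--stabilizer theorem each orbit $O_\mu$ in the weight lattice has size $6/|stab(\mu)|$, which \emph{a priori} lies in $\{1,2,3,6\}$; so what must be shown is that the size $2$ cannot occur, that $1$, $3$, $6$ all do, and that every orbit meets the set of dominant weights $l_1\beta_1+l_2\beta_2$ ($l_1,l_2\in\N$) in exactly one point.

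For the unique dominant representative I would invoke the standard fact (e.g.\ \cite{H1972}) that the closed dominant Weyl chamber is a fundamental domain for $\mathscr{W}$ on $\mathfrak{h}^*$: every orbit contains exactly one weight $\mu$ with $\mu(h_{\alpha_j})\geq 0$ for $j=1,2$. Since $\mu(h_{\alpha_j})=l_j$ when $\mu=l_1\beta_1+l_2\beta_2$ by \eqref{2.1}, and membership in $\Lambda(\ddA_2)$ forces $l_j\in\Z$, this is exactly the asserted existence and uniqueness. For type $\ddA_2$ one may instead argue directly: using $\gamma_i(\beta_j)=\beta_j-\delta_{ij}\alpha_i$ from \eqref{2.1} together with $\alpha_1=2\beta_1-\beta_2$ and $\alpha_2=-\beta_1+2\beta_2$, one writes down the six group elements as explicit maps on the $\beta$-basis and checks existence and uniqueness on a short case list.

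It then remains to determine $|stab(\mu)|$ for a dominant $\mu=l_1\beta_1+l_2\beta_2$. Here I would use that the stabilizer of a point in the closed chamber is the parabolic subgroup generated by the simple reflections whose walls contain it, i.e.\ by $\{\gamma_i : l_i=0\}$: if $l_1,l_2>0$ then $stab(\mu)=\{e\}$ and $|O_\mu|=6$; if exactly one of $l_1,l_2$ vanishes then $stab(\mu)=\langle\gamma_i\rangle$ has order $2$ and $|O_\mu|=3$; and if $l_1=l_2=0$, i.e.\ $\mu=0$, then $stab(\mu)=\mathscr{W}(\ddA_2)$ and $|O_\mu|=1$. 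This yields the list $\{1,3,6\}$ of possible sizes, each occurring (take $\mu=0$, $\mu=\beta_1$ with orbit $\{\beta_1,-\beta_1+\beta_2,-\beta_2\}$, and $\mu=\beta_1+\beta_2$). I do not expect a real obstacle here: this is essentially a warm-up lemma, and the only genuine choice is how much of the standard Weyl-group theory (fundamental domain, parabolic stabilizers) to cite versus to verify by hand from \eqref{2.1} — the direct verification being a short finite computation in the $\ddA_2$ case.
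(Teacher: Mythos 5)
Your proposal is correct and follows essentially the same route as the paper: the paper likewise identifies the stabilizer of a weight with the parabolic subgroup generated by the reflections fixing it (trivial, $\mathscr{W}(\ddA_1)$, or $\mathscr{W}(\ddA_2)$), deduces the orbit sizes $6$, $3$, $1$, and cites \cite[Section 13.2, Lemma A]{H1972} for the unique dominant representative. Your explicit remark that orbit size $2$ must be excluded (since Lagrange alone permits it) is a small point of care that the paper glosses over, but the underlying argument is the same.
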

	\begin{proof} From \cite{B2002}, for any $\beta \in \Lambda(\ddA_2)$ and  $\alpha\in \Phi(\ddA_2)$, the stabilizer of $\beta$ in $\mathscr{W}(\ddA_2)$, is generated by the reflections $r_{\alpha}$, with $\alpha\perp \beta$, and isomorphic to a parabolic subgroup of
		$\mathscr{W}(\ddA_2)$, which is possibly being  the trivial subgroup, $\mathscr{W}(\ddA_1)$ or $\mathscr{W}(\ddA_2)$. Therefore by the Lagrange's Theorem, the possible numbers of weights   in each orbit  are $1$, $3$, and $6$. by \cite[Section13.2, Lemma A]{H1972},
		we see that each orbit has a unique element in the form $l_1\beta_1+l_2\beta_2$, with $l_1$, $l_2\in \N$. and we can compute it and obtain that
		\begin{eqnarray}\#\left(\mathscr{W}(\ddA_2)(l_1\beta_1+l_2\beta_2)\right)=\begin{cases}
				6, \quad l_1l_2\neq 0
				\\
				3, \quad  \text{only one of } l_1 \text{ and } l_2 \text{ being 0}
				\\
				1, \quad l_1=l_2=0
			\end{cases}
		\end{eqnarray}
		\end{proof}
		\begin{lemma}\label{stabistor}
			Suppose $\mu=\sum_{k=1}^{n-1}l_k\beta_k$ and $\alpha=\sum_{i=1}^{n-1}t_i\alpha_i \in \Phi^+$ with $l_k \geq 0$ and $t_i \geq 0$. Then 	$stab(\mu)=\{\gamma_j \in \mathscr{W}(\ddA_{n-1})  \mid l_j=0 \}.$
		\end{lemma}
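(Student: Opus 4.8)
The plan is to read the stabilizer off from the explicit action of the simple reflections on the integral basis $\{\beta_k\}$ recorded in \eqref{2.1}, and then to upgrade the resulting statement about simple reflections to the full stabilizer by invoking the classical fact (already used in the proof of Lemma~\ref{lem3.1}) that the isotropy group of a dominant weight is the standard parabolic subgroup cut out by the simple roots orthogonal to it.

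First I would compute, for each simple reflection $\gamma_j\in\mathscr{W}(\ddA_{n-1})$, using $\gamma_j(\beta_k)=\beta_k-\delta_{jk}\alpha_j$ from \eqref{2.1},
\[
\gamma_j(\mu)=\sum_{k=1}^{n-1}l_k\,\gamma_j(\beta_k)=\sum_{k=1}^{n-1}l_k(\beta_k-\delta_{jk}\alpha_j)=\mu-l_j\alpha_j .
\]
Since $\alpha_j\neq 0$, this already shows $\gamma_j\in stab(\mu)$ exactly when $l_j=0$. Put $J=\{\,j\mid l_j=0\,\}$ and let $\mathscr{W}_J=\langle\,\gamma_j\mid j\in J\,\rangle$ be the associated standard parabolic subgroup; the computation gives $\mathscr{W}_J\subseteq stab(\mu)$.

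For the reverse inclusion I would exploit that $l_k\geq 0$ says precisely that $\mu$ lies in the closure of the fundamental Weyl chamber: indeed $\langle\mu,\alpha_i\rangle=\sum_k l_k\langle\beta_k,\alpha_i\rangle=l_i\geq 0$ by \eqref{2.1}. Hence, by the description of stabilizers of such points (see \cite{B2002}, or the argument reproduced in the proof of Lemma~\ref{lem3.1}), $stab(\mu)$ is generated by the reflections $r_\alpha$ with $\alpha\in\Phi^+$ and $(\mu,\alpha)=0$. For $\alpha=\sum_i t_i\alpha_i\in\Phi^+$ one has $(\mu,\alpha)=\sum_i t_i(\mu,\alpha_i)=\sum_i t_i\,\tfrac{(\alpha_i,\alpha_i)}{2}\,l_i$, a sum of nonnegative terms, so $(\mu,\alpha)=0$ forces $t_il_i=0$ for every $i$; equivalently $t_i=0$ whenever $l_i\neq 0$, i.e. the support of $\alpha$ is contained in $J$, so $r_\alpha\in\mathscr{W}_J$. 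This yields $stab(\mu)\subseteq\mathscr{W}_J$, and therefore $stab(\mu)=\mathscr{W}_J=\langle\,\gamma_j\mid l_j=0\,\rangle$, which is the assertion (the stated equality being understood as the identification of $stab(\mu)$ with the standard parabolic subgroup generated by the listed simple reflections).

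I do not expect a genuine obstacle here: the only non-elementary ingredient is the parabolic-stabilizer theorem, which is exactly the tool already in play for Lemma~\ref{lem3.1}. The one point deserving care is that the pairing $\langle\,\cdot\,,\,\cdot\,\rangle$ of \eqref{2.1} is linear only in its first slot, so the vanishing condition must be phrased through the bilinear inner product $(\,\cdot\,,\,\cdot\,)$ together with $(\mu,\alpha_i)=\tfrac12(\alpha_i,\alpha_i)\langle\mu,\alpha_i\rangle$; in type $\ddA_{n-1}$ all roots share a single length, so this reduces to $(\mu,\alpha)\propto\sum_i t_il_i$, but making it explicit avoids any confusion. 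It is also worth noting that the root $\alpha$ appearing in the statement plays no role beyond supplying the positive roots $\sum_i t_i\alpha_i$ whose orthogonality to $\mu$ is being tested.
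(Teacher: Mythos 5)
Your argument is correct and follows essentially the same route as the paper: both rest on the Bourbaki fact (already invoked in Lemma \ref{lem3.1}) that the stabilizer of a dominant weight is generated by the reflections in the roots orthogonal to it, combined with the computation from \eqref{2.1} that $(\mu,\alpha)=0$ for $\alpha\in\Phi^+$ forces the support of $\alpha$ to lie in $\{j\mid l_j=0\}$. Your write-up simply supplies the details (the inclusion $\mathscr{W}_J\subseteq stab(\mu)$ via $\gamma_j(\mu)=\mu-l_j\alpha_j$ and the nonnegativity argument for the reverse inclusion) that the paper's one-line proof leaves implicit.
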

		\begin{proof}
			From (\ref{2.1}), the subgroup $stab(\mu)$ is generated by those roots perpendicular to $\mu$, then it follows as lamma (\ref{lem3.1}).
		\end{proof}	
	Suppose that $V(\lambda)$ is an irreducible finite dimensional module of $\mathfrak{sl}(3, \C)$ with the highest weight $\lambda$ and
	$\Pi(\lambda)$ denote the set of its weights.
	Let $\mu=l_1\beta_1+l_2\beta_2$,
	$O_{\mu}=\{\gamma(\mu) \mid \gamma\in\mathscr{W}(\ddA_2) \}$, $\lvert O_{\mu} \rvert$ be the number of elements in $O_{\mu}$,
	and $orb(V(\lambda))=\{O_{\mu} \mid O_{\mu}\cap \Pi(\lambda)\neq \varnothing\}$.
	Then we have the following lemma.
	
	\begin{lemma}\label{lem3.2}
		It follows that
		\begin{eqnarray}
			O_{\mu}=\begin{cases}
				\{l_1\beta_1, l_1(\beta_2-\beta_1),-l_1\beta_2\}, \quad l_1\neq 0, \quad l_2=0\\
				\\
				\{l_2\beta_2, l_2(\beta_1-\beta_2),-l_2\beta_1\}, \quad l_2\neq 0, \quad l_1=0\\
				\\
				\{l_1\beta_1+l_2\beta_2,\, -l_1\beta_1+(l_1+l_2)\beta_2,\,l_2\beta_1-(l_1+l_2)\beta_2\\
				-l_2\beta_1-l_1\beta_2,\, (l_1+l_2)\beta_1-l_2\beta_2,\, -(l_1+l_2)\beta_1+l_1\beta_2\},\quad l_1l_2\neq 0\\
				\\
				\{0\}, \quad l_1=l_2=0
			\end{cases}
		\end{eqnarray}
	\end{lemma}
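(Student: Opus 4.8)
The plan is to reduce the whole statement to the explicit linear action on $\Lambda(\ddA_2)$ of the two simple reflections $\gamma_1,\gamma_2$ generating $\mathscr{W}(\ddA_2)$, expressed in the integral basis $\{\beta_1,\beta_2\}$, and then to read off the four orbits case by case, using Lemma~\ref{lem3.1} and Lemma~\ref{stabistor} to know in advance how many elements each orbit contains. First I would record the change of basis: since the Cartan matrix of type $\ddA_2$ is $\bigl(\begin{smallmatrix}2&-1\\-1&2\end{smallmatrix}\bigr)$, one has $\alpha_1=2\beta_1-\beta_2$ and $\alpha_2=-\beta_1+2\beta_2$, so substituting into (\ref{2.1}) gives $\gamma_1(\beta_1)=\beta_2-\beta_1$, $\gamma_1(\beta_2)=\beta_2$, $\gamma_2(\beta_1)=\beta_1$, $\gamma_2(\beta_2)=\beta_1-\beta_2$, and hence for $\mu=l_1\beta_1+l_2\beta_2$
\begin{equation*}
\gamma_1(\mu)=-l_1\beta_1+(l_1+l_2)\beta_2,\qquad \gamma_2(\mu)=(l_1+l_2)\beta_1-l_2\beta_2.
\end{equation*}
This pair of formulas is the only computational ingredient; everything else is bookkeeping.

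Next I would dispose of the generic case $l_1l_2\neq0$. Here Lemma~\ref{lem3.1} (equivalently Lemma~\ref{stabistor}, since neither coordinate vanishes, so $stab(\mu)=\{1\}$) tells us $\lvert O_\mu\rvert=6$. Starting from $\mu$ and alternately applying the two formulas above, one generates the six vectors listed in the third case of the statement; since we already know the orbit has exactly six elements, it then suffices to check that applying $\gamma_1$ or $\gamma_2$ to each of the six keeps us inside that list (equivalently, that the list is $\langle\gamma_1,\gamma_2\rangle$-stable), which is a one-line verification from the formulas. The boundary cases are handled the same way: if $l_2=0$ and $l_1\neq0$, then $stab(\mu)=\langle\gamma_2\rangle$ by Lemma~\ref{stabistor}, so $\lvert O_\mu\rvert=3$, and applying $\gamma_1$ and then $\gamma_2$ to $l_1\beta_1$ produces $\{l_1\beta_1,\ l_1(\beta_2-\beta_1),\ -l_1\beta_2\}$, a $\langle\gamma_1,\gamma_2\rangle$-stable set of the right size; the case $l_1=0$, $l_2\neq0$ follows by interchanging the roles of the two indices, and $\mu=0$ is immediate.

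I do not expect a genuine obstacle in this lemma: it is a finite, completely explicit orbit computation for the $6$-element group $\mathscr{W}(\ddA_2)$. The only two places that need care are getting the change of basis $\alpha_i\leftrightarrow\beta_j$ right in the first step — an error there would propagate into every orbit — and confirming that each enumerated list is genuinely closed under the two generators. Both are kept honest by comparing the number of vectors written down against the orbit cardinality supplied by Lemma~\ref{lem3.1}, so any discrepancy would surface at once.
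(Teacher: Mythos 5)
Your proposal is correct and follows essentially the same route as the paper, which simply invokes the generator action $\gamma_i(\beta_j)=\beta_j-\delta_{ij}\alpha_i$ from (\ref{2.1}) and computes the orbits directly; you merely carry out the change of basis $\alpha_1=2\beta_1-\beta_2$, $\alpha_2=-\beta_1+2\beta_2$ and the resulting enumeration explicitly, with the orbit-size check from Lemma~\ref{lem3.1} as a sanity test. No gap.
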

	\begin{proof} Let $\mathscr{W}(\ddA_2)=\left<\gamma_1, \gamma_2\right>$, with $\gamma_1$ and $\gamma_2$ being the reflections corresponding to $\alpha_1$ and $\alpha_2$, respectively.
		From (\ref{2.1}) we can compute this directly.
	\end{proof}
	
	In this paper, we use $``\sim"$ to indicate that the two matrices are similar.
	\begin{thm}\label{1}
		Let $\mu_1, \dots, \mu_s$ be the different dominant weights of irreducible $\mathfrak{sl}(3, \C)$-module $V(\lambda)$ and $k_i$ be the size of the orbit of $\mu_i$ under the $\mathscr{W}(\ddA_2)$, $i=1, \dots, s$. Then
		$$f_{\phi}(z_0,L)=\prod_{\mu_i\in\Pi(\lambda)}(f_{\mu_i})^{m_\lambda(\mu_i)},$$
		 where $f_{\mu_i}$ is an irreducible polynomial of degree $k_i=\lvert O_{\mu_i} \rvert$ obtained from the orbit of $\mu_i$.
	\end{thm}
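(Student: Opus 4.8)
The plan is to reduce $f_\phi(z_0,L)$ to the characteristic polynomial of a generic element of the Cartan subalgebra and then to regroup the eigenvalues by orbits of $\mathscr{W}(\ddA_2)$. Work over the rational function field $K=\C(z_1,z_2,z_\alpha:\alpha\in\Phi(\ddA_2))$, i.e.\ the function field of $\mathfrak{g}=\mathfrak{sl}(3,\C)$, and regard $L=\sum_{i}z_ih_{\alpha_i}+\sum_{\alpha}z_\alpha E_\alpha$ as an element of $\mathfrak{g}\otimes_\C K$. Since the regular semisimple locus of $\mathfrak{g}$ is Zariski-open and nonempty, $L$ is regular semisimple over the algebraic closure $\overline K$; lifting $\phi$ to $\widetilde\phi:\widetilde G\to\GL(V(\lambda))$ with $\widetilde G$ the simply connected group having Lie algebra $\mathfrak{g}$, there is thus $g\in\widetilde G(\overline K)$ with $\mathrm{Ad}(g)H=L$ for some regular $H\in\mathfrak{h}\otimes\overline K$. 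Then $\phi(L)=\widetilde\phi(g)\,\phi(H)\,\widetilde\phi(g)^{-1}$, and since $\phi(H)$ acts on the weight space $V(\lambda)_\eta$ by the scalar $\eta(H)$ with multiplicity $m_\lambda(\eta)$,
\[
f_\phi(z_0,L)=\det\bigl(z_0I+\phi(H)\bigr)=\prod_{\eta\in\Pi(\lambda)}\bigl(z_0+\eta(H)\bigr)^{m_\lambda(\eta)} .
\]

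Put $c_i:=\beta_i(H)$, so $\eta(H)=\sum_i n_ic_i$ whenever $\eta=\sum_i n_i\beta_i$, and let $\widetilde K:=K(c_1,c_2)$. Using the standard $3$-dimensional representation $\phi_{\mathrm{st}}$, $\widetilde K$ is the splitting field over $K$ of $\det(z_0I+\phi_{\mathrm{st}}(L))$, a separable cubic in $z_0$ with coefficients $\sigma_2,\sigma_3\in\C[\mathfrak{g}]$; hence $\widetilde K/K$ is Galois and $\Gal(\widetilde K/K)$ is a subgroup $\Gamma$ of $\mathscr{W}(\ddA_2)$, acting on $H$ (equivalently on the eigenvalues of $\phi_{\mathrm{st}}(L)$) through the Weyl group. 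The crucial claim is $\Gamma=\mathscr{W}(\ddA_2)$. Indeed $\sigma_2,\sigma_3$ are algebraically independent over $\C$, and by Kostant's description of the adjoint quotient $\mathfrak{g}\to\mathfrak{g}//\widetilde G$ its generic fibre is a single regular semisimple orbit, hence irreducible, so $\C[\mathfrak{g}]^{\widetilde G}=\C[\sigma_2,\sigma_3]$ is algebraically closed in $\C[\mathfrak{g}]$; therefore $\C(\sigma_2,\sigma_3)$ is algebraically closed in $K$, and the $\mathscr{W}(\ddA_2)$-Galois extension $\C(c_1,c_2)/\C(\sigma_2,\sigma_3)$ (a generic cubic) remains Galois with full group after base change to $K$, i.e.\ $\Gamma=\mathscr{W}(\ddA_2)$. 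Consequently, for $\sigma\in\Gal(\widetilde K/K)$ one has $\sigma(\eta(H))=\eta'(H)$ for some $\eta'\in O_\eta$, and as $\sigma$ varies $\eta'$ runs through all of $O_\eta$.

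Now set $f_{\mu_i}:=\prod_{\eta\in O_{\mu_i}}(z_0+\eta(H))$, which is monic of degree $k_i=\lvert O_{\mu_i}\rvert$ in $z_0$ and homogeneous of total degree $k_i$ (each factor being homogeneous of degree $1$, as $H$ scales with $L$). Since $O_{\mu_i}$ is $\mathscr{W}(\ddA_2)$-stable, $\Gal(\widetilde K/K)$ permutes $\{\eta(H):\eta\in O_{\mu_i}\}$, so the coefficients of $f_{\mu_i}$ lie in $K$; each $\eta(H)$ is a root of the monic characteristic polynomial of $\phi(L)$, hence integral over $\C[\mathfrak{g}]$, and as $\C[\mathfrak{g}]$ is integrally closed this forces $f_{\mu_i}\in\C[z_0,z_1,z_2,z_\alpha]$. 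Because $\Gamma=\mathscr{W}(\ddA_2)$ acts transitively on $O_{\mu_i}$ and the $\eta(H)$ ($\eta\in O_{\mu_i}$) are pairwise distinct --- since $(\eta-\eta')(H)\neq0$ for $\eta\neq\eta'$, the $c_i$ being algebraically independent over $\C$ --- the polynomial $\prod_{\eta\in O_{\mu_i}}(t-\eta(H))$ is the minimal polynomial over $K$ of any one $\eta(H)$, hence irreducible over $K$; being monic in $z_0$, $f_{\mu_i}$ is then irreducible in $\C[z_0,z_1,z_2,z_\alpha]$ by Gauss's lemma. Finally, regrouping the first displayed product by orbits of $\mathscr{W}(\ddA_2)$ --- $\Pi(\lambda)$ is the disjoint union of the orbits $O_{\mu_1},\dots,O_{\mu_s}$, each $\mu_i\in\Pi(\lambda)$ by Lemma~\ref{lem3.1}, and $m_\lambda$ is constant on each orbit with value $m_\lambda(\mu_i)$ --- gives
\[
f_\phi(z_0,L)=\prod_{i=1}^{s}\prod_{\eta\in O_{\mu_i}}\bigl(z_0+\eta(H)\bigr)^{m_\lambda(\mu_i)}=\prod_{\mu_i\in\Pi(\lambda)}(f_{\mu_i})^{m_\lambda(\mu_i)},
\]
with $\deg f_{\mu_i}=k_i$, which is the assertion.

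I expect the main obstacle to be the equality $\Gamma=\mathscr{W}(\ddA_2)$, i.e.\ verifying that the Cartan part of the generic $L$ really generates a full Weyl-group extension of $K$ rather than a proper subextension; for $\mathfrak{sl}(3,\C)$ this amounts to the cubic with coefficients $\sigma_2,\sigma_3$ having Galois group $S_3$ over $K$, which in turn rests on the irreducibility of the generic fibre of the adjoint quotient. The remaining ingredients --- conjugating $L$ into $\mathfrak{h}$, the integrality estimate, Gauss's lemma, and the orbit regrouping --- are formal. The same scheme should handle types $\ddB_n$, $\ddC_n$, $\ddD_n$ $(n\ge4)$ and $\ddG_2$, with the cubic replaced by the appropriate generating invariants and the orbit combinatorics supplied by the analogues of Lemmas~\ref{lem3.1}--\ref{lem3.2}.
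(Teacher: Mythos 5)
Your proposal is correct, and its skeleton coincides with the paper's: conjugate the generic $L$ into the Cartan subalgebra, write $f_\phi$ as the product of $(z_0+\eta(H))^{m_\lambda(\eta)}$ over the weights, and regroup by $\mathscr{W}(\ddA_2)$-orbits. Where you genuinely diverge is in how the two nontrivial steps are carried out. For the fact that each orbital factor $f_{\mu_i}$ has coefficients that are honest polynomials in $z_0,z_1,z_2,z_\alpha$, the paper argues by brute force: it reduces to the symmetric functions $p=s_2$, $q=s_3$ of the eigenvalues via Vieta and then computes $f_\mu$ explicitly in each of the four orbit types ($\mu=0$, $\mu=l_1\beta_1$, $\mu=l_2\beta_2$, $\mu=l_1\beta_1+l_2\beta_2$), exhibiting the coefficients as polynomials in $p,q$; your route through Galois invariance of the orbit plus integrality of the $\eta(H)$ over the integrally closed ring $\C[\mathfrak{g}]$ reaches the same conclusion without any case analysis and transfers verbatim to the other classical types, which the paper instead handles by repeating the symmetric-function computation (Theorems \ref{2}, \ref{4.4}, \ref{4.13}, \ref{4.14}). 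For irreducibility, the paper (Theorem \ref{3.10}) also uses Galois theory, asserting $\Gal(K/F)=\mathfrak{S}_n$ for the splitting field of the generic characteristic polynomial and deducing transitivity on the orbit; but it simply asserts this equality (and its citation of a result giving a group ``isomorphic to $\mathfrak{S}_{n!}$'' is garbled), whereas you actually justify the fullness of the Galois group via the algebraic independence of the invariants and the irreducibility of the generic fibre of the adjoint quotient. In short: your argument buys uniformity and a rigorous treatment of the one point the paper glosses over, at the cost of heavier machinery (Kostant's theorem, integral closure); the paper's computation buys explicit formulas for the orbital factors in terms of $p$ and $q$, which it uses in its examples.
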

	
	\begin{proof}
		Suppose $$L=z_{11}H_1+z_{22}H_2+\sum_{1\leq i\neq j\leq 3}z_{ij}E_{ij},$$
		then we have
		\begin{equation}
			L=\left(\begin{array}{ccc}
				z_{11} & z_{12} & z_{13} \\
				z_{21} & z_{22}-z_{11} & z_{23}\\
				z_{31} & z_{32} & -z_{22}
			\end{array}\right),
		\end{equation}
		and the characteristic polynomial
		\begin{equation*}
			\begin{aligned}
				\mathrm{det}(z_0 I-L)=z_0^3+(-z_{11}^2+z_{11}z_{22}-z_{22}^2-z_{12}z_{21}-z_{13}z_{31}-z_{23}z_{32})z_0-\mathrm{det}(L).
			\end{aligned}
		\end{equation*}
		Let
		\begin{alignat*}{2}
			p&=-(z_{11}^2+z_{22}^2-z_{11}z_{22}+z_{12}z_{21}+z_{13}z_{31}+z_{23}z_{32}),\\
			q&=\mathrm{det}(L),
		\end{alignat*}
		then
		\begin{equation*}\label{form1}
			\mathrm{det}(z_0 I-L)=z_0^3+pz_0-q
		\end{equation*}
		with eigenvalues $\theta_1$, $\theta_2$, $\theta_3$.

		The elements in $\mathfrak{sl}(3, \C)$ that can be diagonalized are dominant in $\mathfrak{sl}(3, \C)$ being endowed with the Zariski topology, so we can just prove the theorem for diagonalizable elements in $\mathfrak{sl}(3, \C)$.
		Since $Tr(L)=0$, 
		$L$ is similar to the following diagonal matrix
		\begin{equation*}
			\left(\begin{array}{ccc}
				\theta_1 & 0 & 0 \\
				0 & -\theta_1-\theta_2 & 0\\
				0 & 0 & \theta_2
			\end{array}\right),
		\end{equation*}
		which implies $L\sim \theta_1H_1-\theta_2H_2=diag(\theta_1$, $\theta_3$, $\theta_2)$.
		Therefore,
		\begin{equation*}
			f_{\phi}(z_0,L)=\mathrm{det}\big(z_{0}I+\phi(\theta_1H_1-\theta_2H_2)\big).
		\end{equation*}

		The $\Pi(\lambda)$ is reclassified by orbits, then
		\begin{equation*}
			ch_{\lambda}=\sum_{O_{\mu}\subset O(\lambda)} \sum_{\gamma(\mu) \in O_{\mu}}m_\lambda(\mu)e(\mu),
		\end{equation*}	
		Since the weight on the same orbits have the same multiplicity,
		then we have
		\begin{eqnarray*}
			f_{\phi}(z_0,L)&=&\prod_{O_{\mu}\subset orb(V(\lambda))}\prod_{\gamma(\mu)\in O_{\mu}} \big(z_0+\theta_1 \gamma(\mu)(H_1)-\theta_2 \gamma(\mu)(H_2) \big)^{m_\lambda(\mu)}\\
			&=&\prod_{O_{\mu}\subset orb(V(\lambda))}\bigg(\prod_{\gamma \in \mathfrak{S}_n/stab(\mu)} \big(z_0+\theta_1 \gamma(\mu)(H_1)-\theta_2 \gamma(\mu)(H_2) \big)\bigg)^{m_\lambda(\mu)}.
		\end{eqnarray*}
	Let \begin{equation*}
		f_{\mu}=\prod_{\gamma \in \mathfrak{S}_n/stab(\mu)} \big(z_0+\theta_1 \gamma(\mu)(H_1)-\theta_2 \gamma(\mu)(H_2) \big),
	\end{equation*}
	where $\mu=l_1\beta_1+l_2\beta_2$.
	
	By the Vieta's theorem, we have
	$$p=\theta_1\theta_2+\theta_1\theta_3+\theta_2\theta_3,$$
	$$q=\theta_1\theta_2\theta_3.$$
	Hence, we divide it into four cases.
	
	\textbf{Case 1:}

	When $\mu=0$, $f_{\mu}=z_0.$
	
	\textbf{Case 2:}
	
	When $\mu=l_2\beta_2,$	$\phi(-\theta_2H_2)=diag(-\theta_2l_2, \theta_1l_2+\theta_2l_2, -\theta_1l_2),$
	then
	\begin{eqnarray*}
		f_{\mu}&=&\prod_{\gamma \in \mathfrak{S}_n/stab(\mu)} \big(z_0+\theta_1 \gamma(\mu)(H_1)-\theta_2 \gamma(\mu)(H_2) \big)\\
		&=&z_{0}^{3}-pl_2^2z_{0}-ql_2^3 \\
		&=&z_{0}^{3}+(z_{11}^2+z_{22}^2-z_{11}z_{22}+z_{12}z_{21}+z_{13}z_{31}+z_{23}z_{32})l_2^2z_{0}-\mathrm{det}(L)l_2^3.
	\end{eqnarray*}
	\textbf{Case 3:}
	
	When $\mu=l_1\beta_1$ similar to the case above,
	\begin{eqnarray*}
		f_{\mu}&=&z_{0}^{3}-pl_1^2z_{0}+ql_1^3 \\
		&=&z_{0}^{3}+(z_{11}^2+z_{22}^2-z_{11}z_{22}+z_{12}z_{21}+z_{13}z_{31}+z_{23}z_{32})l_1^2z_{0}+\mathrm{det}(L)l_1^3.
	\end{eqnarray*}
	\textbf{Case 4:}
	
	When $\mu=l_1\beta_1+ l_2\beta_2,$
	\begin{eqnarray*}
		&&\phi(\theta_1H_1-\theta_2H_2)\\
		&=&diag(\theta_1l_1-\theta_2l_2, \theta_3l_1-\theta_2l_2, \theta_2l_1-\theta_3l_2, \theta_2l_1-\theta_1l_2,\theta_1l_1-\theta_3l_2 , \theta_3l_1-\theta_1l_2),
	\end{eqnarray*}
	then
	\begin{eqnarray*}
		f_{\mu}&=&\prod_{\gamma \in \mathfrak{S}_n/stab(\mu)} \big(z_0+\theta_1 \gamma(\mu)(H_1)-\theta_2 \gamma(\mu)(H_2) \big)\\
		&=&z_{0}^{6}+\sum_{i=0}^{4}q_{l_1,l_2}^iz_{0}^i,
	\end{eqnarray*}
	where
	\begin{alignat*}{5}
		q_{l_1,l_2}^4&=2p(l_2^2+l_1^2+l_2l_1),\\
		q_{l_1,l_2}^3&=q(l_1-l_2)(2l_1+l_2)(l_1+2l_2),\\
		q_{l_1,l_2}^2&=p^2(l_2^2+l_1^2+l_2l_1)^2,\\
		q_{l_1,l_2}^1&=pq(l_1-l_2)(2l_1+l_2)(l_1+2l_2)(l_1^2+l_1 l_2+l_2^2),\\
		q_{l_1,l_2}^0&=
		q^2(l_1^2+l_1 l_2+l_2^2)^3+p^3(l_1^2 l_2^2(l_1+l_2)^2).
	\end{alignat*}
	Therefore, $f_\phi$ can be expressed as the product of orbital factors $f_\mu$, namely
	$$f_{\phi}(z_0,L)=(f_{\mu_1})^{m_\lambda(\mu_1)}\cdots (f_{\mu_s})^{m_\lambda(\mu_s)},$$
	where $\mu_i$ are dominant weights of irreducible $\mathfrak{sl}(3, \C)$-module $V$ and the degree of $f_{\mu_i}$ equal to the size of the orbit of $\mu_i$ under the $\mathscr{W}(\ddA_2)$.	

    For the irreducibility of $f_{\mu}$, we leave it to a general proof in Theorem \ref{3.10}.
\end{proof}

Throughout this paper, we call the $f_{\mu}$ the orbital factors.

\begin{example}
Suppose $\phi$ is the representation of $\mathfrak{sl}(3, \C)$ of dimension $15$ with highest weight $4\beta_2$ and dominant weights $\beta_2$, $2\beta_1$, $\beta_1+2\beta_2$. This can be referred to \cite[Lecture 12]{FH1991}. Let
$\mu_1=4\beta_2$, $\mu_2=\beta_1+2\beta_2$, $\mu_3=2\beta_1$, $\mu_4=\beta_2$, we have
\begin{alignat*}{4}
	f_{\mu_1}&=z_{0}^{3}-16pz_{0}-64q,\\
	f_{\mu_2}&=z_0^6+14pz_0^4-20qz_0^3+49p^2z_0^2-140pqz_0+343q^2+36p^3,\\
	f_{\mu_3}&=z_0^3-4pz_0+8q,\\
	f_{\mu_4}&=z_0^3-pz_0-q.
\end{alignat*}
After verification, the result of directly calculating $f_{\phi}(z_0,L)$ is indeed equal to $f_{\mu_1}f_{\mu_2}f_{\mu_3}f_{\mu_4}.$
\end{example}

\subsection{Characteristic polynomials for  $\mathfrak{sl}(n, \C)$} \label{An}
\hspace{1.3em}Let $E_{i,j}$ be the $n \times n$ matrix  with $1$ at the entry $(i,j)$, others being $0$. Suppose that
$\{H_i\}$ is the Chevalley basis for
the Cartan subalgebra  $\mathfrak{h}$ of $\mathfrak{sl}(n, \C)$,
with $H_i=E_{i,i}-E_{i+1,i+1} (1\leq i\leq n-1)$.
$\forall L\in\mathfrak{sl}(n, \C)$, we have
\begin{eqnarray} \label{indeterminant}
	L=\sum_{i=1}^{n-1}z_{i,i}H_i+\sum_{1\leq i\neq j\leq n}z_{i,j}E_{i,j}.
\end{eqnarray}

\begin{rem}\label{3.4}
	Suppose  $L_0 \in \mathfrak{sl}(n, \C)$
	has pairwise distinct eigenvalues
	$\theta_1(L_0), \dots, \theta_n(L_0)$ ordered by their length and argument, we can order these roots, namely $\lvert \theta_i(L_0) \rvert < \lvert \theta_j(L_0) \rvert$ or $\arg \theta_i(L_0)<\arg \theta_j(L_0)$ when
	$\lvert \theta_i(L_0) \rvert = \lvert \theta_j(L_0) \rvert$ with $i<j.$
	Therefore, there is an open neighbourhood $U$ of $L_0$ in $\mathfrak{g}$ such that any element $L $ in $U$ has distinct eigenvalues $\theta_1(L), \dots, \theta_n(L)$,
	those $L_0$ are dense under the usual complex topology and dominant under the Zariski's topology,
	and the ordered eigenvalues are continous with respect to the usual complex topology on $U$.
\end{rem}


\begin{thm}\label{3.5}
	Let $\mathfrak{h}$ be the Cartan subalgebra of classical Lie algebra $\mathfrak{sl}(n, \C)$ with the canonical basis $\{H_i\}_{i=1}^{n-1}$ and $L\in\mathfrak{sl}(n, \C)$ can be diagonalizable with distinct eigenvalues $\theta_1,\dots,\theta_{n}$, then
	$$L\sim \sum_{i=1}^{n-1}\bigg(\sum_{j=1}^{i}\theta_j\bigg)H_i.$$
\end{thm}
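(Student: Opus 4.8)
The plan is to diagonalize $L$ explicitly and then exhibit the change of basis that rewrites the diagonal matrix as a combination of the $H_i$. Since $L$ is diagonalizable with distinct eigenvalues $\theta_1,\dots,\theta_n$, there is $g\in \GL(n,\C)$ with $gLg^{-1}=\mathrm{diag}(\theta_1,\dots,\theta_n)$; note $\sum_i\theta_i=\tr(L)=0$ because $L\in\mathfrak{sl}(n,\C)$. So it suffices to show that the diagonal matrix $D=\mathrm{diag}(\theta_1,\dots,\theta_n)$ equals $\sum_{i=1}^{n-1}\big(\sum_{j=1}^{i}\theta_j\big)H_i$ on the nose (not merely up to similarity).

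The key computation is telescoping. Write $s_i=\sum_{j=1}^i\theta_j$ for $1\le i\le n-1$, with $s_0=0$ and $s_n=0$. Then $\sum_{i=1}^{n-1}s_iH_i=\sum_{i=1}^{n-1}s_i(E_{i,i}-E_{i+1,i+1})$, and collecting the coefficient of $E_{k,k}$ gives $s_k-s_{k-1}=\theta_k$ for each $k=1,\dots,n$ (using $s_0=s_n=0$ for the endpoints $k=1$ and $k=n$). Hence $\sum_{i=1}^{n-1}s_iH_i=D$, and therefore $L=g^{-1}Dg\sim D=\sum_{i=1}^{n-1}\big(\sum_{j=1}^i\theta_j\big)H_i$, which is exactly the claim.

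There is essentially no obstacle here: the statement is a bookkeeping identity about the Chevalley basis of the Cartan subalgebra combined with the elementary fact that trace-zero diagonalizable matrices are conjugate to their diagonal form. The only mild subtlety worth a sentence is that the ordering of the eigenvalues $\theta_1,\dots,\theta_n$ must be fixed first (any ordering works, but the coefficients $\sum_{j=1}^i\theta_j$ depend on it); this matches the convention set up in Remark \ref{3.4}. One could also phrase the argument without choosing $g$ at all, simply noting that $L$ and $\sum_{i=1}^{n-1}s_iH_i$ have the same characteristic polynomial and are both diagonalizable, hence similar; but the telescoping identity makes the explicit conjugation transparent and will be reused when the same normalization is needed in the orbit computations for $\mathfrak{sl}(n,\C)$ in the next subsection.
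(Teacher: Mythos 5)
Your proof is correct and follows the same route as the paper: the paper simply writes ``$L\sim \mathrm{diag}(\theta_1,\dots,\theta_n)$ with $\sum_i\theta_i=0$, then by computation,'' and your telescoping identity $s_k-s_{k-1}=\theta_k$ (using $s_0=s_n=0$) is exactly the computation being alluded to. Nothing is missing.
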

\begin{proof}
	Since $L\sim diag(\theta_1,\dots,\theta_n)$
	with $\sum_{i=1}^n\theta_i=0$, then by computation, it follows that
	$$L\sim \sum_{i=1}^{n-1}\bigg(\sum_{j=1}^{i}\theta_j\bigg)H_i.$$
\end{proof}

Let $\mu=\sum_{k=1}^{n-1}l_k\beta_k \in \Lambda(\ddA_{n-1})$ be a dominant weight where $\{\beta_k\}$ is a integra basis of integral weight lattice $\Lambda(\ddA_{n-1})$ and $l_k\in \N$.
 Considering $l_i$ being 0 or not, there are $2^{n-1}$ kinds of orbitals for the Weyl group, because of the permutations that take zero for $l_i$.
 By Lemma \ref{lem3.2}, similar to $\mathfrak{sl}(3, \C)$, we obtain orbit
\begin{eqnarray*}
	O_{\mu}=\{\gamma(\mu)\mid \gamma\in\mathscr{W}(\ddA_{n-1})\}.
\end{eqnarray*}
Define the stabilizer of $\mu$ as follows
$$stab(\mu)=\{\gamma \in \mathscr{W}(\ddA_{n-1}) \mid \gamma(\mu)=\mu\}.$$
\begin{lemma}\label{stabistor}
	Suppose $\mu=\sum_{k=1}^{n-1}l_k\beta_k$ and $\alpha=\sum_{i=1}^{n-1}t_i\alpha_i \in \Phi^+$ with $l_k \geq 0$ and $t_i \geq 0$. Then 	$stab(\mu)=\{\gamma_j \in \mathscr{W}(\ddA_{n-1})  \mid l_j=0 \}$
\end{lemma}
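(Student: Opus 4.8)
The plan is to establish the two inclusions separately, reading the statement as the assertion that $stab(\mu)$ is the subgroup of $\mathscr{W}(\ddA_{n-1})$ generated by the simple reflections $\gamma_j$ with $l_j=0$. One inclusion is immediate: using $\gamma_i(\beta_k)=\beta_k-\delta_{ik}\alpha_i$ from (\ref{2.1}) one computes
$$\gamma_j(\mu)=\sum_{k=1}^{n-1}l_k\gamma_j(\beta_k)=\mu-l_j\alpha_j,$$
so $\gamma_j(\mu)=\mu$ exactly when $l_j=0$; in particular $\langle\gamma_j : l_j=0\rangle\subseteq stab(\mu)$. The same identity records that $\langle\mu,\alpha_i\rangle=\mu(h_{\alpha_i})=l_i\ge 0$, since $\mu$ is dominant.

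For the reverse inclusion I would invoke, exactly as in the proof of Lemma \ref{lem3.1} (following \cite{B2002}), that $stab(\mu)$ is generated by the reflections $r_\alpha$ with $\alpha\in\Phi(\ddA_{n-1})$ and $\alpha\perp\mu$. Given such a root, taken positive and written $\alpha=\sum_i t_i\alpha_i$ with $t_i\ge 0$, I expand $(\mu,\alpha)=\sum_i t_i(\mu,\alpha_i)=0$; since each $(\mu,\alpha_i)$ is a nonnegative multiple of $l_i$, this forces $t_i l_i=0$ for every $i$, i.e.\ $\alpha$ is supported only on the simple roots $\alpha_j$ with $l_j=0$. Hence $\alpha$ lies in the sub-root-system $\Phi_J$ spanned by $\{\alpha_j : j\in J\}$, where $J=\{j : l_j=0\}$, so $r_\alpha$ lies in its Weyl group $\mathscr{W}(\Phi_J)=\langle\gamma_j : j\in J\rangle$. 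Running over all such $\alpha$ gives $stab(\mu)\subseteq\langle\gamma_j : l_j=0\rangle$, which together with the first step proves the lemma.

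I do not expect a genuine difficulty: the only input that is not a formal manipulation is the classical description of the stabilizer of a point of the closed fundamental chamber as the standard parabolic subgroup generated by the simple reflections fixing it, and this is already used for $\ddA_2$ in Lemma \ref{lem3.1} and holds verbatim for $\ddA_{n-1}$. If one prefers a self-contained argument, one can work concretely in $\mathscr{W}(\ddA_{n-1})\cong\mathfrak{S}_n$: under the identification used in Theorem \ref{3.5} the dominant weight $\mu$ corresponds to a weakly decreasing integer sequence, its stabilizer is the Young subgroup permuting the blocks of equal entries, the adjacent transposition $\gamma_j$ fixes $\mu$ precisely when its $j$-th and $(j+1)$-th entries coincide, i.e.\ $l_j=0$, and every Young subgroup is generated by the adjacent transpositions it contains.
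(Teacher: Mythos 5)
Your proposal is correct and follows essentially the same route as the paper: both rest on the Bourbaki fact (already invoked in Lemma \ref{lem3.1}) that $stab(\mu)$ is generated by the reflections $r_\alpha$ with $\alpha\perp\mu$, combined with the computation $\gamma_j(\mu)=\mu-l_j\alpha_j$ from (\ref{2.1}). The paper's proof is a one-line reference to these facts, whereas you supply the details (in particular the observation that $(\mu,\alpha)=\sum_i t_i(\mu,\alpha_i)=0$ forces the support of $\alpha$ into $\{j: l_j=0\}$), which is a welcome expansion rather than a different argument.
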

\begin{proof}
	From (\ref{2.1}), $stab(\mu)$ is generated by those roots perpendicular to $\mu$, then it follows as lamma (\ref{lem3.1}).
\end{proof}	
Since $\gamma(\mu) \in \mathfrak{h}^*$, and from (\ref{2.1})  we can obtain the values for $\gamma(\mu)(H_i)$ where $\gamma$ runs all over $\mathscr{W}(\ddA_{n-1})$, $1\leq i \leq n-1$.
Let $f_{\phi}(z_0,L)$ be the characteristic polynomial of them with respect to the representation $\phi$, we know that $f_{\phi}(z_0,L) \in \mathbb{C}[z_{1,1},\dots, z_{n-1, n-1},z_{i,j}].$
From Theorem \ref{3.5}, we have
\begin{eqnarray*}
	f_{\phi}(z_0,L)&=&\det\left(z_0I+\phi\bigg(\sum_{i=1}^{n-1}\bigg(\sum_{j=1}^{i}\theta_j\bigg)H_i\bigg)\right)\\
	&=&\det\left(z_0I+\sum_{i=1}^{n-1}\bigg(\sum_{j=1}^{i}\theta_j\bigg)\phi(H_i)\right)\\
	&=&\prod_{O_{\mu}\subset orb(V(\lambda))} (f_{\mu})^{m_\lambda(\mu)}
\end{eqnarray*}	
with orbital factor,
\begin{eqnarray*}
	f_{\mu}&=&\prod_{\gamma(\mu)\in O_{\mu}}\left(z_0I+\sum_{i=1}^{n-1}\bigg(\sum_{j=1}^{i}\theta_j\bigg)\gamma(\mu)(H_i)\right)\\
	&=&\prod_{\gamma \in \mathfrak{S}_n/stab(\mu)}\left(z_0I+\sum_{i=1}^{n-1}\bigg(\sum_{j=1}^{i}\theta_j\bigg)\gamma(\mu)(H_i)\right).
\end{eqnarray*}

\begin{rem}
	It's known that the symmetric group $\mathfrak{S}_n\simeq\mathscr{W}(\ddA_{n-1})$ can act on the polynomial ring $\mathbb{C}[\theta_1,\dots,\theta_{n}]$ by
	$$(\gamma \cdot f)(\theta_1,\dots,\theta_{n})=f(\theta_{\gamma(1)},\dots,\theta_{\gamma(n)})$$
	for $\gamma \in \mathfrak{S}_n$ and $f\in\mathbb{C}[\theta_1,\dots,\theta_{n}]$ where $\theta_1,\dots,\theta_{n}$ is regarded as indeterminants. The  representation of $\mathfrak{S}_n$ on $V$ as permutation matrices, with $V=\mathbb{C}^n \simeq \mathbb{C}\theta_1\oplus \cdots \oplus \mathbb{C}\theta_n.$
	Let $\{\theta_i\}_{i=1}^n$ be a basis of $V$, $W=\mathbb{C}(\theta_2-\theta_1)\oplus \cdots \oplus\mathbb{C}(\theta_n-\theta_1)$ and $W'=\mathbb{C}(\theta_1+\cdots+\theta_n)$.
	As the $\mathfrak{S}_n$-group modules, we have $V=W\oplus W'$, with $W$ being the canonical reflection representation for $\mathfrak{S}_n\simeq\mathscr{W}(\ddA_{n-1})$.

	Recall the elementary symmetric functions
	$$s_i(\theta_1,\dots,\theta_{n})=\sum_{1\leq j_1 < \cdots < j_i \leq n}\theta_{j_1} \cdots \theta_{j_i},$$
	and set $s_0=1$. 
	Define the subring $\mathbb{C}[\theta_1,\dots,\theta_{n}]^{\mathfrak{S}_n}$ of invariant polynomials,
	$$\mathbb{C}[\theta_1,\dots,\theta_{n}]^{\mathfrak{S}_n}=\{f\in\mathbb{C}[\theta_1,\dots,\theta_{n}]\mid \gamma \cdot f=f\ for\ all\ \gamma\in\mathfrak{S}_n\}.$$
	Clearly, $s_i \in \mathbb{C}[\theta_1,\dots,\theta_{n}]^{\mathfrak{S}_n}$ and $\mathbb{C}[\theta_1,\dots,\theta_{n}]^{\mathfrak{S}_n}=\mathbb{C}[s_1,\dots, s_n]$ by \cite[Section 16-2]{RK2001}.
	And we have the identity
	$$\det(z_0 I-L)=\sum_{j=0}^n(-1)^js_jz_0^{n-j},$$
	with $L \in \mathfrak{sl}(n,\C)$ which has eigenvalues $\theta_1,\dots,\theta_n$.
	Let $\mathbb{C}[z_{1,1},\dots, z_{n-1, n-1}, \dots, z_{i,j}, \dots]$ be the set of all Characteristic polynomials of $L$ in the (\ref{indeterminant}).
	From the Vieta's Theorem, $s_i \in \mathbb{C}[z_{1,1},\dots, z_{n-1, n-1},\dots, z_{i,j}, \dots].$
\end{rem}

\begin{thm}\label{2}
	Extending the $\mathscr{W}(\ddA_{n-1})\simeq\mathfrak{S}_n$ actions on $z_{1,1},\dots, z_{n-1, n-1},z_{i,j}$ to $z_0$ by trivial action.
	The orbit factor $f_{\mu}$ for $\mathfrak{sl}(n, \C)$ is a  $\mathscr{W}(\ddA_{n-1})$ invariant polynomial, 
	so it follows that $f_{\mu} \in \mathbb{C}[s_1,\dots, s_n].$
	Furthermore, given that $f_\mu$ is a polynomial of $z_0$, the coefficients of $f_\mu$ can be expressed by polynomials of $s_1,\dots, s_n$.
	And then $f_{\mu}$ is a polynomial factor of $f_\phi$.
\end{thm}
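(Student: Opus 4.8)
The plan is to exploit the fact, visible in the displayed formula for $f_\mu$, that $f_\mu$ is a product over the orbit $O_\mu = \mathscr{W}(\ddA_{n-1})(\mu)$ of linear factors in $z_0$ whose non-$z_0$ parts are obtained by letting $\gamma$ run over $\mathfrak{S}_n/stab(\mu)$. First I would observe that $\mathscr{W}(\ddA_{n-1})\simeq\mathfrak{S}_n$ acts on the set of coset representatives $\mathfrak{S}_n/stab(\mu)$ by left multiplication, and that this action simply permutes the factors in the product defining $f_\mu$. Hence, if we let $\sigma\in\mathfrak{S}_n$ act on the indeterminates by $\sigma\cdot z_{i,j}$ (equivalently, act on the eigenvalues $\theta_1,\dots,\theta_n$ by $\sigma\cdot\theta_i = \theta_{\sigma(i)}$ as in the preceding Remark) and trivially on $z_0$, then
$$\sigma\cdot f_\mu = \prod_{\gamma\in\mathfrak{S}_n/stab(\mu)}\Bigl(z_0 + \textstyle\sum_{i=1}^{n-1}\bigl(\sum_{j=1}^{i}\theta_{\sigma(j)}\bigr)\gamma(\mu)(H_i)\Bigr) = \prod_{\gamma\in\mathfrak{S}_n/stab(\mu)}\Bigl(z_0 + \textstyle\sum_{i=1}^{n-1}\bigl(\sum_{j=1}^{i}\theta_j\bigr)(\sigma^{-1}\gamma)(\mu)(H_i)\Bigr),$$
and since $\gamma\mapsto\sigma^{-1}\gamma$ is a bijection of $\mathfrak{S}_n/stab(\mu)$, this equals $f_\mu$. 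The one point that needs care here is the bookkeeping translating "$\gamma$ acts on $\mu$" into "$\sigma$ acts on the $\theta_j$'s", i.e. checking that the substitution $\theta_j\mapsto\theta_{\sigma(j)}$ in the coefficient $\sum_{j\le i}\theta_j$ composed with the reindexing $\gamma(\mu)(H_i)$ indeed amounts to replacing $\gamma$ by $\sigma^{-1}\gamma$; this is the main (though routine) obstacle and follows from (\ref{2.1}) together with Theorem \ref{3.5}.

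Once $\sigma\cdot f_\mu = f_\mu$ for all $\sigma\in\mathfrak{S}_n$ is established, I would invoke the Remark: the characteristic polynomial identity $\det(z_0I - L) = \sum_{j=0}^n(-1)^j s_j z_0^{n-j}$ shows that each elementary symmetric function $s_j = s_j(\theta_1,\dots,\theta_n)$ lies in the ring $\mathbb{C}[z_{1,1},\dots,z_{n-1,n-1},z_{i,j}]$ (by Vieta, as the $s_j$ are, up to sign, the coefficients of the genuine characteristic polynomial of the matrix $L$ in (\ref{indeterminant})), and conversely $\mathbb{C}[\theta_1,\dots,\theta_n]^{\mathfrak{S}_n} = \mathbb{C}[s_1,\dots,s_n]$ by the fundamental theorem on symmetric functions. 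Writing $f_\mu = \sum_{k} c_k(z_{i,j})\, z_0^k$, each coefficient $c_k$ is a polynomial in the $z_{i,j}$ that — by the displayed computation — is symmetric under $\mathfrak{S}_n$, hence lies in $\mathbb{C}[s_1,\dots,s_n]$; therefore $f_\mu\in\mathbb{C}[s_1,\dots,s_n][z_0]$, which is the asserted membership $f_\mu\in\mathbb{C}[s_1,\dots,s_n]$ (with $z_0$ adjoined trivially).

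Finally, for the last assertion that $f_\mu$ is a polynomial factor of $f_\phi$, I would point to the factorization already derived just before the theorem,
$$f_\phi(z_0,L) = \prod_{O_\mu\subset orb(V(\lambda))}(f_\mu)^{m_\lambda(\mu)},$$
obtained from Theorem \ref{3.5} by grouping the weights of $V(\lambda)$ into $\mathscr{W}(\ddA_{n-1})$-orbits and using that $m_\lambda$ is constant on orbits; since each exponent $m_\lambda(\mu)\ge 1$, every $f_\mu$ indeed divides $f_\phi$ in the polynomial ring. I do not expect any genuine difficulty in this last step beyond citing the already-established product formula; the substantive content of the theorem is the symmetry argument of the first paragraph, and the only place one must be attentive is the index manipulation identifying the $\mathfrak{S}_n$-action on indeterminates with relabeling of the orbit sum.
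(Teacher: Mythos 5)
Your argument is correct and is essentially the paper's own: the invariance of $f_\mu$ is obtained exactly as in the paper's proof (left multiplication by $\sigma$ permutes the cosets in $\mathfrak{S}_n/stab(\mu)$ and hence the factors of the product), and your translation of the action on $\mu$ into the substitution $\theta_j\mapsto\theta_{\sigma(j)}$ is precisely the computation carried out in the Remark immediately following the theorem. The appeal to Vieta and the fundamental theorem of symmetric functions, and the reference to the previously derived product formula for the divisibility claim, likewise match the paper.
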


\begin{proof}
	We take $\sigma, \gamma\in\mathscr{W}(\ddA_{n-1})$, then
	\begin{eqnarray*}
		\sigma(f_{\mu})&=&\sigma\left(\prod_{\gamma \in \mathfrak{S}_n/stab(\mu)}\left(z_0+\gamma  (\mu)\bigg(\sum_{i=1}^{n-1}\bigg(\sum_{j=1}^{i}\theta_j\bigg)H_i\bigg)\right)\right)\\
		&=&\prod_{\gamma \in \mathfrak{S}_n/stab(\mu)}\left(z_0+\sigma(\gamma  (\mu))\bigg(\sum_{i=1}^{n-1}\bigg(\sum_{j=1}^{i}\theta_j\bigg)H_i\bigg)\right).
	\end{eqnarray*}	
	It is well know that $\sigma \gamma$, $\gamma$ running over all representations of $\mathfrak{S}_n/stab(\mu)$, is another presentation of $\mathfrak{S}_n/stab(\mu)$.
	So 	$\sigma(f_{\mu})=f_{\mu}$.
\end{proof}

\begin{rem}
	Another way to prove the theorem \ref{2} is as follows, where $f_{\mu}$ is rewritten.
	
	In fact, we have
	\begin{eqnarray*}
		\gamma (\mu) \left(\sum_{i=1}^{n-1}\bigg(\sum_{j=1}^{i}\theta_j\bigg)H_i\right)&=&\left(\sum_{k=1}^{n-1}l_k\gamma(\beta_k)\right)  \left(\sum_{i=1}^{n-1}\bigg(\sum_{j=1}^{i}\theta_{j}\bigg)H_i\right)\\
		&=&\mu \left(\gamma^{-1}\bigg(\sum_{i=1}^{n-1}\bigg(\sum_{j=1}^{i}\theta_{j}\bigg)H_i\bigg)\right)\\
		&=&\mu\left(\gamma^{-1}(diag(\theta_1, \theta_2, \cdots, \theta_{n})) \right) \\
		&=&\mu\left(diag(\theta_{\gamma(1)}, \theta_{\gamma(2)}, \cdots, \theta_{\gamma(n)}) \right) \\
		&=&\sum_{i=1}^{n-1}\bigg(\sum_{j=1}^{i}\theta_{\gamma(j)}\bigg)\mu(H_i),\\
	\end{eqnarray*}
	with $\gamma \in \mathscr{W}(\ddA_{n-1})$.
	Then we have
	\begin{eqnarray*}
		f_{\mu}&=&\prod_{\gamma \in \mathfrak{S}_n/stab(\mu)}\left(z_0+\gamma  (\mu)\bigg(\sum_{i=1}^{n-1}\bigg(\sum_{j=1}^{i}\theta_j\bigg)H_i\bigg)\right)\\
		&=&\prod_{\gamma \in \mathfrak{S}_n/stab(\mu)}\left(z_0+\sum_{i=1}^{n-1}\bigg(\sum_{j=1}^{i}\theta_{\gamma(j)}\bigg)\mu(H_i)\right) \\
		&=&\prod_{\gamma \in \mathfrak{S}_n/stab(\mu)}\left(z_0+\sum_{i=1}^{n-1}\bigg(\sum_{j=1}^{i}\theta_{\gamma(j)}\bigg)l_i\right).
	\end{eqnarray*}	
	So we know that $f_{\mu}$ is a symmetric polynomial,
	therefore a $\mathfrak{S}_n$ invariant polynomial and can be written uniquely as an algebraic expression in $\{s_1, \dots, s_{n}\}$ from \cite[Section 16-2]{RK2001} or \cite[Chapter 5]{GW2009}.
\end{rem}

The above conclusion brings us to the following theorem.
\begin{thm}\label{3.10}\label{irreducible}
	Let $\mathfrak{g} \in \mathfrak{sl}(n, \C)$ with a Chevalley basis, $\mu_1, \dots, \mu_s$ be the different dominant weights of $\mathfrak{g}$-module $V$ and $k_i$ be the size of the orbit of $\mu_i$ under the Weyl group  $\mathscr{W}$, $i=1, \dots, s$.
	Then the characteristic polynomial
	$$f_{\phi}(z_0,L)=(f_{\mu_1})^{m_\lambda(\mu_1)}\cdots (f_{\mu_s})^{m_\lambda(\mu_s)},$$
	with $f_{\mu_i}$ irreducible of degree $k_i=\lvert O_{\mu_i} \rvert$.
\end{thm}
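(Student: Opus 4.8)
The plan is to combine the factorization already obtained just above the statement with a separate irreducibility argument for each orbital factor $f_{\mu_i}$. The factorization $f_{\phi}(z_0,L)=\prod_{O_{\mu}\subset\mathrm{orb}(V(\lambda))}(f_{\mu})^{m_\lambda(\mu)}$ follows directly from Theorem \ref{3.5} (diagonalizing $L$ on a Zariski-dense set), the reclassification of $\Pi(\lambda)$ into $\mathscr{W}(\ddA_{n-1})$-orbits, and the fact that weights on the same orbit have equal multiplicity; the degree of $f_{\mu_i}$ is $|\mathfrak{S}_n/\mathrm{stab}(\mu_i)|=|O_{\mu_i}|=k_i$ by Lemma \ref{stabistor}. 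So the entire content left to prove is that each $f_{\mu}$ is irreducible in $\mathbb{C}[z_0,z_{1,1},\dots,z_{n-1,n-1},z_{i,j}]$.

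First I would pass, via the identity $\det(z_0I-L)=\sum_{j=0}^n(-1)^j s_j z_0^{n-j}$ and Vieta's theorem, to working in the ring $\mathbb{C}[z_0,s_1,\dots,s_n]$; by the Remark after Theorem \ref{2}, $f_{\mu}$ is a symmetric polynomial in $\theta_1,\dots,\theta_n$ with coefficients (as a polynomial in $z_0$) lying in $\mathbb{C}[s_1,\dots,s_n]$, and since $s_1,\dots,s_n$ are algebraically independent it suffices to prove $f_{\mu}$ is irreducible as an element of $\mathbb{C}[z_0,\theta_1,\dots,\theta_n]^{\mathfrak{S}_n}$. Using the rewritten form $f_{\mu}=\prod_{\gamma\in\mathfrak{S}_n/\mathrm{stab}(\mu)}\bigl(z_0+\sum_{i=1}^{n-1}(\sum_{j=1}^{i}\theta_{\gamma(j)})\,l_i\bigr)$, I observe that $f_{\mu}$, viewed in $\mathbb{C}(\theta_1,\dots,\theta_n)[z_0]$, is a product of $k_i$ distinct monic linear factors in $z_0$ (distinct because the linear forms $\ell_\gamma(\theta):=\sum_i(\sum_{j\le i}\theta_{\gamma(j)})l_i$ take distinct values for $\gamma$ in distinct cosets of $\mathrm{stab}(\mu)$, as the dominant weight $\mu$ has trivial stabilizer beyond $\mathrm{stab}(\mu)$). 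Hence any factorization of $f_{\mu}$ in $\mathbb{C}[z_0,s_1,\dots,s_n]$ corresponds to a partition of the coset set $\mathfrak{S}_n/\mathrm{stab}(\mu)$ into two nonempty $\mathfrak{S}_n$-stable subsets each giving a polynomial with coefficients in $\mathbb{C}[s_1,\dots,s_n]$; but $\mathfrak{S}_n$ acts transitively on $\mathfrak{S}_n/\mathrm{stab}(\mu)$, so no such nontrivial stable partition exists, forcing irreducibility. I would phrase this cleanly as: the Galois group $\mathfrak{S}_n=\mathrm{Gal}\bigl(\mathbb{C}(\theta_1,\dots,\theta_n)/\mathbb{C}(s_1,\dots,s_n)\bigr)$ permutes the roots $\{-\ell_\gamma(\theta)\}$ of $f_{\mu}$ in $z_0$ transitively, and a monic polynomial over a field whose splitting field has a Galois group acting transitively on its roots is irreducible.

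The main obstacle is the transitivity/distinctness claim, i.e. verifying that $\gamma\mapsto \ell_\gamma$ is injective on $\mathfrak{S}_n/\mathrm{stab}(\mu)$ and that $\mathfrak{S}_n$ permutes these $k_i$ linear forms transitively. Injectivity amounts to: if $\sum_i(\sum_{j\le i}(\theta_{\gamma(j)}-\theta_{\gamma'(j)}))l_i=0$ identically in the $\theta$'s, then $\gamma^{-1}\gamma'\in\mathrm{stab}(\mu)$ — this should follow by rewriting $\ell_\gamma(\theta)=\mu\bigl(\mathrm{diag}(\theta_{\gamma(1)},\dots,\theta_{\gamma(n)})\bigr)$ as in the Remark, so that $\ell_\gamma=\ell_{\gamma'}$ as linear forms forces $\gamma(\mu)=\gamma'(\mu)$ in $\mathfrak{h}^*$, hence equality in $\mathscr{W}(\ddA_{n-1})/\mathrm{stab}(\mu)$ by the orbit–stabilizer correspondence. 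Transitivity of $\mathfrak{S}_n$ on $\{\ell_\gamma\}$ is then immediate from its transitive action on the coset space. Once these two points are settled the irreducibility, and with it the theorem, follows; the degree count $\deg_{z_0}f_{\mu_i}=k_i$ has already been recorded, completing the proof.
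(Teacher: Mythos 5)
Your proposal is correct and follows essentially the same route as the paper: the paper likewise proves irreducibility by passing to the splitting field of $\det(z_0I-L)$ over the coefficient field, identifying the roots of $f_{\mu}$ (in $z_0$) with the cosets $\mathfrak{S}_n/\mathrm{stab}(\mu)$, and invoking transitivity of the Galois action to conclude via the standard criterion (citing Cox). If anything your write-up is more careful, since you explicitly verify the distinctness of the linear forms $\ell_\gamma$ and the transitivity of the action on them --- points the paper asserts without proof, and where its statement that the Galois group of $f_{\mu}$ is ``isomorphic to $\mathfrak{S}_{n!}$'' is a misstatement of what is actually needed.
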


\begin{proof}
Here we just need to prove the irreducibility of $f_{\mu}$.
Let $F=\mathbb{C}[z_0, z_{1,1},\dots, z_{n-1, n-1},\dots, $ $z_{i,j}, \dots]$ be the quotient field
of $\mathbb{C}[z_0]$,
$K$ be the splitting field of $\mathrm{det}(z_0 I-L)$ where $L$ is in formula (\ref{indeterminant}). Then $Gal(K/ F)=\mathfrak{S}_n$. Consider each $\mu=\sum_{k=1}^{n-1}l_k\beta_k \in \Lambda(\ddA_{n-1})$, the orbit of $\mu$ under $\mathfrak{S}_n$ is corresponding to $\mathfrak{S}_n/stab(\mu)$.
Furthermore, let $K_\mu$ be the splitting field of $f_\mu$, by Galois theory, $Gal(K/K_\mu)=stab(\mu)$.
Therefore by \cite[Theorem 6.4.1]{DAC2012}, the Galois group $G$ for $f_{\mu}$  is isomorphic to $\mathfrak{S}_{n!}$ and then $G$ is transitive. In \cite[Proposition 6.3.7]{DAC2012}, we see that $f_{\mu}$ is irreducible.
\end{proof}

The following proposition show a method to compute each $f_\lambda$ by induction.
\begin{prop}
	Let $\mathfrak{g}$ be a finite dimensional complex simple Lie algebra and $\lambda \in \Lambda(\mathfrak{g})$ be the dominant weight of irreducible $\mathfrak{g}$-module $V(\lambda)$.
	Then the orbit factor $f_{\lambda}$ is a rational homogeneous polynomial and $\deg f_{\lambda}=\lvert O_{\lambda} \rvert$.	
\end{prop}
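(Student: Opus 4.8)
The plan is to establish the two assertions — rational homogeneity and the degree formula — by tracing through the explicit product formula for $f_\lambda$ that has already been derived for the $\mathfrak{sl}(n,\C)$ case (and, following the arrangement of the paper, will be derived analogously in Section~4 for types $\ddB_n$, $\ddC_n$, $\ddD_n$, $\ddG_2$). Recall from the discussion preceding Theorem~\ref{3.10} that
$$f_{\lambda}=\prod_{\gamma\in\mathscr{W}/stab(\lambda)}\Big(z_0+\gamma(\lambda)(L)\Big),$$
where $L$ is the diagonalizable representative of the generic element of $\mathfrak g$ expressed through its eigenvalues $\theta_1,\dots,\theta_n$, and $\gamma(\lambda)(L)$ is a linear form in the $\theta_i$'s (hence, by Vieta, a rational expression in the coordinates $z_{i,i}, z_{i,j}$). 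The number of factors in this product is exactly the index $[\mathscr{W}:stab(\lambda)]$, which by definition equals $\lvert O_\lambda\rvert$, the size of the orbit of $\lambda$. Since $z_0$ has degree $1$ and each $\gamma(\lambda)(L)$ is homogeneous of degree $1$ in the ambient indeterminates (the $\theta_i$ are degree-$1$ quantities, being eigenvalues of the degree-$1$ pencil $L$), the product is homogeneous of degree $\lvert O_\lambda\rvert$. This immediately gives $\deg f_\lambda=\lvert O_\lambda\rvert$.

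Next I would address the word \emph{rational}: the coefficients of $f_\lambda$, viewed as a polynomial in $z_0$, are symmetric functions of $\theta_1,\dots,\theta_n$ (by Theorem~\ref{2} and the Remark following it, $f_\lambda$ is $\mathscr{W}$-invariant), hence lie in $\C[s_1,\dots,s_n]$, and by Vieta's theorem the $s_i$ are genuine polynomials in the matrix entries $z_{i,i}, z_{i,j}$. Therefore $f_\lambda$ already lies in the polynomial ring $\C[z_0, z_{1,1},\dots,z_{i,j},\dots]$; the adjective ``rational'' here simply records that a priori it was built inside the splitting field $K$ of $\det(z_0I-L)$ but descends to the base. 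For the non-simply-laced and $\ddD_n$ cases one repeats the same reasoning, using the analogue of Theorem~\ref{3.5} that will be proved in Section~4 to write $L$ in a diagonal-type normal form whose entries are $\Z$-linear in the $\theta_i$, so that each orbital linear form $z_0+\gamma(\lambda)(L)$ is again homogeneous of degree one and $\mathscr W$-permuted among the $\lvert O_\lambda\rvert$ cosets.

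**The main obstacle** I anticipate is purely bookkeeping uniformity rather than any deep difficulty: one must be careful that the normal form $L\sim\sum(\sum\theta_j)H_i$ (Theorem~\ref{3.5}), and its counterparts for $\ddB_n,\ddC_n,\ddD_n,\ddG_2$, genuinely makes $\gamma(\lambda)(L)$ a \emph{homogeneous} linear form — in the orthogonal and symplectic types the eigenvalues come in pairs $\pm\theta_i$ (and possibly a zero), so one should check that the weight pairings $\gamma(\lambda)(H_i)$ still combine the $\theta_i$ linearly with constant (integer) coefficients, which they do because weights are linear functionals on the Cartan subalgebra. Once that is in place, homogeneity of degree $\lvert O_\lambda\rvert$ and the identification of the number of linear factors with the orbit size are both immediate, and the proposition follows. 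I would close by remarking that this also re-proves, uniformly, the degree statements appearing in Theorems~\ref{1} and~\ref{3.10}.
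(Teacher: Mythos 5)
Your proof is correct, but it is not the route the paper takes. You argue directly from the orbit product formula $f_\lambda=\prod_{\gamma\in\mathscr{W}/stab(\lambda)}\bigl(z_0+\gamma(\lambda)(L)\bigr)$: there are exactly $[\mathscr{W}:stab(\lambda)]=\lvert O_\lambda\rvert$ factors, each homogeneous of degree one because the eigenvalues $\theta_i$ scale linearly with the pencil $L$, and $\mathscr{W}$-invariance pushes the coefficients down into $\C[s_1,\dots,s_n]$ (resp.\ $\C[\bar{s}_1,\dots]$) and hence into the polynomial ring in the $z_{i,j}$. The paper instead proceeds by induction on the dominance order: since $m_\lambda(\lambda)=1$ and every other dominant weight $\mu$ of $V(\lambda)$ satisfies $\mu\prec\lambda$, it sets $g_\lambda=\prod_{\mu\prec\lambda}(f_\mu)^{m_\lambda(\mu)}$ and writes $f_\lambda=f_{V(\lambda)}/g_\lambda$; by the inductive hypothesis $g_\lambda$ is a homogeneous polynomial of degree $\sum_{\mu\prec\lambda}\lvert O_\mu\rvert m_\lambda(\mu)$, while $f_{V(\lambda)}$ is a homogeneous determinant of degree $\dim V(\lambda)$, so the quotient is homogeneous of degree $\dim V(\lambda)-\sum_{\mu\prec\lambda}\lvert O_\mu\rvert m_\lambda(\mu)=\lvert O_\lambda\rvert$. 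The two arguments buy different things: the paper's induction is an effective recipe --- it computes $f_\lambda$ by dividing the full characteristic polynomial by the already-known lower orbital factors, without ever diagonalizing $L$ or manipulating symmetric functions of its eigenvalues, and it is what justifies the adjective ``rational'' (a priori a quotient of polynomials); your direct argument is shorter, avoids the induction, and yields the sharper conclusion that $f_\lambda$ is in fact a genuine polynomial in $z_0$ and the $z_{i,j}$, at the cost of invoking the normal-form and invariant-theory machinery separately in each type.
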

\begin{proof}
	Let $V(\lambda)$ be the irreducible module for classical Lie algebra $\mathfrak{g}$ with highest weight $\lambda$ for $\mu$ being domaint weight. We have
	$$ch_\lambda=\sum_{O_\mu \in orb(V(\lambda))}m_\lambda(\mu)\left(\sum_{\mu'\in O_\mu}e(\mu')\right).$$
	It is known that $m_\lambda(\lambda)=1$.
	
	From Remark \ref{3.4} and Theorem \ref{3.5}, and let
	$$g_{\lambda}=\prod_{\stackrel{\mu \prec \lambda}{\mu\in orb(V(\lambda))}}(f_\mu)^{m_{\lambda}(\mu)}.$$
	Then we have
	$$f_\lambda=\frac{f_{V(\lambda)}}{g_{\lambda}}.$$
	
	By induction, $f_\mu$ is a rational homogeneous polynomial with $\mu \prec \lambda$ and we see that
	$$\sum_{\stackrel{\mu \prec \lambda}{\mu\in orb(V(\lambda))}}\deg f_\mu=\sum_{\stackrel{\mu \prec \lambda}{\mu\in orb(V(\lambda))}}\lvert O_{\mu} \rvert m_\mu, $$ 
	and $g_{\lambda}$ is also a homogeneous polynomial. 
	Therefore $f_\lambda$ is a homogeneous polynomial
	with $\deg f_\lambda=\deg f_{V(\lambda)} - \deg f_{\lambda}=\lvert O_{\lambda} \rvert.$
\end{proof}

\subsection{Some results for $\mathfrak{sl}(4, \C)$}
\hspace{1.3em}We recall that Cartan subalgebra $\mathfrak{h}$ of $\mathfrak{sl}(4, \C)$, %
has a basis $\{H_1,H_2,$ $H_3\}$.
From Cartan matrix of root system $\Phi(\ddA_3)$ for Lie algebra $\mathfrak{sl}(4, \C)$, integral weight lattice $\Lambda(\ddA_3)$ has the integral basis
$$\beta_1=\frac{3\alpha_1+2\alpha_2+\alpha_3}{4},\ \beta_2=\frac{\alpha_1+2\alpha_2+\alpha_3}{2},\ \beta_3=\frac{\alpha_1+2\alpha_2+3\alpha_3}{4},$$
where $\alpha_1$, $\alpha_2$, $\alpha_3$ are the simple roots of $\Phi(\ddA_3)$.
It's well know that the Weyl group $\mathscr{W}(\ddA_3)$ is genetated by the reflections $\gamma_i (1\leq i \leq 3)$ and  have the presentation
$$\{\gamma_i, 1\leq i \leq 3 \mid \gamma_i^2=id,(\gamma_i\gamma_{i+1})^3=id, (\gamma_i\gamma_j)^2=id\ for\ \lvert i-j \rvert>1\}.$$

Let $\mu=l_1\beta_1+l_2\beta_2+l_3\beta_3$ be a dominant weight in $\Lambda(\ddA_3)$ with $l_1, l_2, l_3 \in \N$.
We obtain orbit
\begin{eqnarray*}
	O_{\mu}=\{\gamma(\mu)\mid \gamma\in\mathscr{W}(\ddA_3)\}.
\end{eqnarray*}
Let $L \in \mathfrak{sl}(4, \C)$ and have the form
\begin{equation*}
	L=\left(\begin{array}{cccc}
		z_{11} & z_{12} & z_{13} & z_{14}\\
		z_{21} & z_{22}-z_{11}   & z_{23}&z_{24}\\
		z_{31} & z_{32} & z_{33}-z_{22}  & z_{34} \\
		z_{41} & z_{42} & z_{43} & -z_{33} \\
	\end{array}\right),
\end{equation*}
with eigenvalues $\theta_1, \theta_2, \theta_3, \theta_4$, so
$$L\sim diag(\theta_1, \theta_2, \theta_3, \theta_4)=\theta_1H_1+(\theta_1+\theta_2)H_2+(\theta_1+\theta_2+ \theta_3)H_3.$$
Let $f_{\phi}(z_0,L)$ be the characteristic polynomial of $\mathfrak{sl}(4, \C)$ with respect to the representation $\phi$.
From Section \ref{An},
we have orbital factor
\begin{eqnarray*}
	f_{\mu}=\prod_{\gamma(\mu)\in O_{\mu}}\big(z_0+\theta_1 \gamma(\mu)(H_1)+(\theta_1+\theta_2) \gamma(\mu)(H_2)+(\theta_1+\theta_2+\theta_3)\gamma(\mu)(H_3)\big).
\end{eqnarray*}
Let
$$v=\theta_1 \gamma(\mu)(H_1)+(\theta_1+\theta_2) \gamma(\mu)(H_2)+(\theta_1+\theta_2+\theta_3)\gamma(\mu)(H_3).$$
In brief, we list the orbit $O_{\mu}$ and values for $v$  in the table below.

\begin{center}
	\begin{table}[htbp]
		\footnotesize
		\caption{weights in $O_{\mu}$ and the values for $v$}
		\centering
		\renewcommand\arraystretch{1.0}{
			\begin{tabular}{|p{6.2cm}<{\centering}|p{6.2cm}<{\centering}|}
			\Xhline{1.5px} 
			$Id$ & $\gamma_1$ \\
			\Xhline{1px} 
			$l_1\beta_1+l_2\beta_2+l_3\beta_3$ & $-l_1\beta_1+(l_1+l_2)\beta_2+l_3\beta_3$   \\
			\Xhline{1px} 
			$\theta_1l_1+(\theta_1+\theta_2)l_2+(\theta_1+\theta_2+\theta_3)l_3$ & $\theta_2l_1+(\theta_1+\theta_2)l_2+(\theta_1+\theta_2+\theta_3)l_3$ \\

			\Xhline{1.5px} 
			$\gamma_2$ & $\gamma_3$ \\
			\Xhline{1px}
			$(l_1+l_2)\beta_1-l_2\beta_2+(l_2+l_3)\beta_3$ & $l_1\beta_1+(l_2+l_3)\beta_2-l_3\beta_3$   \\
			\Xhline{1px} 
			$\theta_1l_1+(\theta_1+\theta_3)l_2+(\theta_1+\theta_2+\theta_3)l_3$ &
			$\theta_1l_1+(\theta_1+\theta_2)l_2+(\theta_1+\theta_2+\theta_4)l_3$ \\	
			
			\Xhline{1.5px}
			$\gamma_1\gamma_2\gamma_1$ & $\gamma_2\gamma_3\gamma_2$ \\
			\Xhline{1px}
			$-l_2\beta_1-l_1\beta_2+(l_1+l_2+l_3)\beta_3$ & $(l_1+l_2+l_3)\beta_1-l_3\beta_2-l_2\beta_3$   \\
			\Xhline{1px} 
			$\theta_3l_1+(\theta_2+\theta_3)l_2+(\theta_1+\theta_2+\theta_3)l_3$ &
			$\theta_1l_1+(\theta_1+\theta_4)l_2+(\theta_1+\theta_3+\theta_4)l_3$ \\	
			
			\Xhline{1.5px}
			$\gamma_3\gamma_2\gamma_1\gamma_2\gamma_3$ & $\gamma_1\gamma_3$ \\
			\Xhline{1px}
			$-(l_2+l_3)\beta_1+l_2\beta_2-(l_1+l_3)\beta_3$ & $-l_1\beta_1+(l_1+l_2+l_3)\beta_2-l_3\beta_3$   \\
			\Xhline{1px} 
			$\theta_4l_1+(\theta_2+\theta_4)l_2+(\theta_2+\theta_3+\theta_4)l_3$ &
			$\theta_2l_1+(\theta_1+\theta_2)l_2+(\theta_1+\theta_2+\theta_4)l_3$ \\	
			
			\Xhline{1.5px}
			$\gamma_2\gamma_1\gamma_3\gamma_2$ & $\gamma_3\gamma_2\gamma_1\gamma_2\gamma_3\gamma_2$ \\
			\Xhline{1px}
			$l_3\beta_1-(l_1+l_2+l_3)\beta_2+l_1\beta_3$ & $-l_3\beta_1-l_2\beta_2-l_1\beta_3$   \\
			\Xhline{1px} 
			$\theta_3l_1+(\theta_3+\theta_4)l_2+(\theta_1+\theta_3+\theta_4)l_3$ &
			$\theta_4l_1+(\theta_3+\theta_4)l_2+(\theta_2+\theta_3+\theta_3)l_4$ \\	
			
			\Xhline{1.5px}
			$\gamma_1\gamma_2\gamma_3$ & $\gamma_1\gamma_2\gamma_3\gamma_1\gamma_2$ \\
			\Xhline{1px}
			$-(l_1+l_2+l_3)\beta_1+l_1\beta_2+l_2\beta_3$ & $-l_3\beta_1-(l_1+l_2)\beta_2+l_1\beta_3$   \\
			\Xhline{1px} 
			$\theta_2l_1+(\theta_2+\theta_3)l_2+(\theta_2+\theta_3+\theta_4)l_3$ &
			$\theta_3l_1+(\theta_3+\theta_4)l_2+(\theta_2+\theta_3+\theta_4)l_3$ \\	
			
			\Xhline{1.5px}
			$\gamma_2\gamma_1\gamma_3$ & $\gamma_1\gamma_3\gamma_2$ \\
			\Xhline{1px}
			$(l_2+l_3)\beta_1-(l_1+l_2+l_3)\beta_2+(l_1+l_2)\beta_3$ & $-(l_1+l_2)\beta_1+(l_1+l_2+l_3)\beta_2-(l_2+l_3)\beta_3$   \\
			\Xhline{1px} 
			$\theta_3l_1+(\theta_1+\theta_3)l_2+(\theta_1+\theta_3+\theta_4)l_3$ &
			$\theta_2l_1+(\theta_2+\theta_4)l_2+(\theta_1+\theta_2+\theta_4)l_3$ \\	
			
			\Xhline{1.5px}
			$\gamma_2\gamma_1\gamma_3\gamma_2\gamma_1$ & $\gamma_3\gamma_2\gamma_1$ \\
			\Xhline{1px}
			$l_3\beta_1-(l_2+l_3)\beta_2-l_1\beta_3$ & $l_2\beta_1+l_3\beta_2-(l_1+l_2+l_3)\beta_3$   \\
			\Xhline{1px} 
			$\theta_4l_1+(\theta_3+\theta_4)l_2+(\theta_1+\theta_3+\theta_4)l_3$ &
			$\theta_4l_1+(\theta_1+\theta_4)l_2+(\theta_1+\theta_2+\theta_4)l_3$ \\	
			
			\Xhline{1.5px}
			$\gamma_1\gamma_2$ & $\gamma_2\gamma_3$ \\
			\Xhline{1px}
			$-(l_1+l_2)\beta_1+l_1\beta_2+(l_2+l_3)\beta_3$ & $(l_1+l_2+l_3)\beta_1-(l_2+l_3)\beta_2+l_2\beta_3$   \\
			\Xhline{1px} 
			$\theta_2l_1+(\theta_2+\theta_3)l_2+(\theta_1+\theta_2+\theta_3)l_3$ &
			$\theta_1l_1+(\theta_1+\theta_3)l_2+(\theta_1+\theta_3+\theta_3)l_4$ \\	
			
			\Xhline{1.5px}
			$\gamma_1\gamma_2\gamma_3\gamma_2$ & $\gamma_1\gamma_2\gamma_1\gamma_3$ \\
			\Xhline{1px}
			$-(l_1+l_2+l_3)\beta_1+(l_1+l_2)\beta_2-l_2\beta_3$ & $-(l_2+l_3)\beta_1-l_1\beta_2+(l_1+l_2)\beta_3$   \\
			\Xhline{1px} 
			$\theta_2l_1+(\theta_2+\theta_4)l_2+(\theta_2+\theta_3+\theta_4)l_3$ &
			$\theta_3l_1+(\theta_2+\theta_3)l_2+(\theta_2+\theta_3+\theta_4)l_3$ \\	
			
			\Xhline{1.5px}
			$\gamma_2\gamma_1$ & $\gamma_3\gamma_1\gamma_2\gamma_1$ \\
			\Xhline{1px}
			$l_2\beta_1-(l_1+l_2)\beta_2+(l_1+l_2+l_3)\beta_3$ & $-l_2\beta_1+(l_2+l_3)\beta_2-(l_1+l_2+l_3)\beta_3$   \\
			\Xhline{1px} 
			$\theta_3l_1+(\theta_1+\theta_3)l_2+(\theta_1+\theta_2+\theta_3)l_3$ &
			$\theta_4l_1+(\theta_2+\theta_4)l_2+(\theta_1+\theta_2+\theta_4)l_3$ \\	
			
			\Xhline{1.5px}
			$\gamma_2\gamma_3\gamma_2\gamma_1$ & $\gamma_3\gamma_2$ \\
			\Xhline{1px}
			$(l_2+l_3)\beta_1-l_3\beta_2-(l_1+l_2)\beta_3$ & $(l_1+l_2)\beta_1+l_3\beta_2-(l_2+l_3)\beta_3$   \\
			\Xhline{1px} 
			$\theta_4l_1+(\theta_1+\theta_4)l_2+(\theta_1+\theta_3+\theta_4)l_3$ &
			$\theta_1l_1+(\theta_1+\theta_4)l_2+(\theta_1+\theta_2+\theta_4)l_3$ \\	
			\Xhline{1.5px}
	\end{tabular}}
\end{table}
\end{center}

From the above table we can obtain $O_{l_1\beta_1}$, $O_{l_2\beta_2}$, $O_{l_3\beta_3}$, $O_{l_1\beta_1+l_2\beta_2}$, $O_{l_1\beta_1+l_3\beta_3}$, $O_{l_2\beta_2+l_3\beta_3}$, $O_{l_1\beta_1+l_2\beta_2+l_3\beta_3}$ and the orbit factors $f_\mu$. 
For example, take $l_2=l_3=0$, we get $O_{l_1\beta_1}$.
Let's take $\mu=l_2\beta_2$,
by the Vieta's theorem, we have
\begin{eqnarray*}
	f_{\mu}&=&\prod_{\gamma(\mu)\in O_{l_2\beta_2}}\big(z_0+\theta_1 \gamma(\mu)(H_1)+(\theta_1+\theta_2) \gamma(\mu)(H_2)+(\theta_1+\theta_2+\theta_3)\gamma(\mu)(H_3)\big)\\
	&=&z_0^6+2pl_2^2z_0^4+(p^2l_2^4-4rl_2^4)z_0^2-q^2l_2^6,
\end{eqnarray*}
where
$$p=\theta_1\theta_2+\theta_1\theta_3+\theta_1\theta_4+\theta_2\theta_3+\theta_2\theta_4+\theta_3\theta_4,$$
$$q=\theta_1\theta_2\theta_3+\theta_1\theta_2\theta_4+\theta_1\theta_3\theta_4+\theta_2\theta_3\theta_4,$$
$$r=\theta_1\theta_2\theta_3\theta_4.$$
Therefore, $f_{\mu}\in \mathbb{C}[z_{11}, z_{22},z_{33}, z_{12},\dots, z_{43}].$

\section{Characteristic polynomials for $\ddB_{n}$, $\ddC_{n}$, $\ddD_{n}(n\geq 4)$ and $\ddG_2$ }\label{classical}
	
\hspace{1.3em}In this chapter, we mainly study the characteristic polynomials on the complex simple Lie algebra of type $\ddB_{n}$, $\ddC_{n}$, $\ddD_{n}(n\geq 4)$ namely $\mathfrak{o}(2n+1, \C)$, $\mathfrak{sp}(2n, \C)$, $\mathfrak{o}(2n, \C)(n\geq 2)$ as defined in \cite[Chapter 1]{H1972} and the exceptional Lie algebra $\ddG_2$.


\subsection{Characteristic polynomials for $\ddB_{n}$, $\ddC_{n}$, $\ddD_{n}(n\geq 4)$}

\begin{lemma}\label{4.1}
	For the 
	classical simple Lie algebra of $\mathfrak{o}(2n+1, \C)$, $\mathfrak{sp}(2n, \C)$, $\mathfrak{o}(2n, \C) (n \geq 2)$, each $L$ in them.
	If $\theta$ is an eigenvalue of $L$, then $-\theta$ is also an eigenvalue of $L$ and they have the same multiplicities.
	For $\mathfrak{o}(2n+1, \C)$, the characteristic polynomial $\mathrm{det}(z_0 I-L)$ have only odd powers of $z_0$. For $\mathfrak{sp}(2n, \C)$ and $\mathfrak{o}(2n, \C) (n \geq 2)$, the characteristic polynomial $\mathrm{det}(z_0 I-L)$ have only even powers of $z_0$.
\end{lemma}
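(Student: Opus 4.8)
The plan is to exploit the defining bilinear form of each of these matrix Lie algebras. Recall from \cite[Chapter 1]{H1972} that, in the standard matrix realizations, $\mathfrak{o}(m,\C)$ (with $m=2n+1$ for type $\ddB_n$ and $m=2n$ for type $\ddD_n$) consists exactly of the matrices $L$ with $L^{\mathrm{T}}S+SL=0$ for a fixed nondegenerate \emph{symmetric} matrix $S$, while $\mathfrak{sp}(2n,\C)$ consists of the matrices $L$ with $L^{\mathrm{T}}J+JL=0$ for a fixed nondegenerate \emph{skew-symmetric} matrix $J$. In every case the defining relation can be rewritten as $SLS^{-1}=-L^{\mathrm{T}}$ (respectively $JLJ^{-1}=-L^{\mathrm{T}}$), so that each element $L$ of the Lie algebra is similar to $-L^{\mathrm{T}}$.

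Next I would invoke the elementary fact that any square matrix is similar to its transpose; composing this with the previous step gives $L\sim -L^{\mathrm{T}}\sim -L$. The first assertion of the lemma then follows at once: similar matrices have the same characteristic polynomial, hence the same eigenvalues with equal algebraic multiplicities, and the eigenvalues of $-L$ are precisely the negatives of those of $L$; thus $\theta$ occurs as an eigenvalue of $L$ if and only if $-\theta$ does, with the same multiplicity. Geometric multiplicities match for the same reason, since $\dim\ker(L-\theta I)=\dim\ker(-L-\theta I)=\dim\ker(L+\theta I)$.

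For the parity statements, write $N$ for the size of $L$ and use $L\sim -L$ once more:
\[
\det(z_0I-L)=\det(z_0I+L)=(-1)^N\det\big((-z_0)I-L\big),
\]
the last equality because $z_0I+L=-\big((-z_0)I-L\big)$. Hence the characteristic polynomial $p(z_0):=\det(z_0I-L)$ satisfies $p(z_0)=(-1)^N p(-z_0)$. For $\mathfrak{o}(2n+1,\C)$ we have $N=2n+1$ odd, so $p(-z_0)=-p(z_0)$, i.e.\ $p$ is an odd polynomial in $z_0$ and only odd powers of $z_0$ appear; for $\mathfrak{sp}(2n,\C)$ and $\mathfrak{o}(2n,\C)$ we have $N=2n$ even, so $p(-z_0)=p(z_0)$ and only even powers appear. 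In particular, for type $\ddB_n$ the eigenvalue $0$ is forced to occur with odd multiplicity, consistently with $p(0)=0$.

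I do not expect a genuine obstacle here. The only point demanding a little care is bookkeeping with the specific matrices $S$ and $J$ chosen in \cite[Chapter 1]{H1972}, in order to confirm that the identity $SLS^{-1}=-L^{\mathrm{T}}$ (resp.\ with $J$) holds on the nose for the basis elements used there; but since only similarity of $L$ with $-L$ is needed, any coordinate change relating those forms to the standard ones would in any case do no harm.
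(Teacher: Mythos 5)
Your proposal is correct and follows essentially the same route as the paper: both use the defining relation of the bilinear form to get $L\sim -L^{\mathrm{T}}\sim -L$ and then deduce the eigenvalue symmetry and the parity of $\det(z_0I-L)$ from $p(z_0)=(-1)^N p(-z_0)$. The only difference is that you spell out the parity computation explicitly, which the paper leaves implicit.
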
	
\begin{proof}
	We only clarify type $\mathfrak{sp}(2n, \C)$, other types are similar.
	Let
	\begin{equation*}
		J=\left(
		\begin{array}{cc}
			0 & I_n  \\
			-I_n &  0 \\
		\end{array}.
		\right)	
	\end{equation*}
	We know that
	$\mathfrak{sp}(2n, \C)$ consist of all $2n\times 2n$ matrices $L$ satisfying
	$$LJ+JL^T=0,$$
	where $L^T$ denotes the transpose matrix.
	This means that $L \sim -L^T$, and since $L \sim L^T$, we have $L \sim -L$. Therefore the conclusion is confirmed.
\end{proof}	

Let $E_{i,j}$ be the  matrix  with $1$ at the entry $(i,j)$, others being $0$.
The Chevalley bases for
the Cartan subalgebras  $\mathfrak{h}$ of $\mathfrak{o}(2n+1, \C)$, $\mathfrak{sp}(2n, \C)$, $\mathfrak{o}(2n, \C) (n \geq 2)$ are listed below respectively.\\
$\mathfrak{o}(2n+1, \C)(n \geq 1)$:\\
$H_i=E_{i+1,i+1}-E_{i+2,i+2}-E_{n+i+1,n+i+1}+E_{n+i+2,n+i+2} (1\leq i\leq n-1),\\ H_n=2E_{n+1,n+1}-2E_{2n+1,2n+1}$.\\
$\mathfrak{sp}(2n, \C)(n \geq 1)$:\\
$H_i=E_{i,i}-E_{i+1,i+1}-E_{n+i,n+i}+E_{n+i+1,n+i+1}) (1\leq i\leq n-1),\\
H_n=E_{n,n}-E_{2n,2n}$.\\
$\mathfrak{o}(2n, \C) (n \geq 2)$:\\
$H_i=E_{i,i}-E_{i+1,i+1}-E_{n+i,n+i}+E_{n+i+1,n+i+1} (1\leq i\leq n-1),\\
H_n=E_{n-1,n-1}+E_{n,n}-E_{2n-1,2n-1}+E_{2n,2n}$.

\begin{thm}\label{4.9}
	Let $\mathfrak{h}$ be the Cartan subalgebra of classical Lie algebra $\mathfrak{o}(2n+1, \C)$ with the canonical basis $\{H_i\}_{i=1}^{n}$ and $L\in\mathfrak{o}(2n+1, \C)$ can be diagonalizable with eigenvalues $0, \theta_1,\cdots,\theta_n, $
	$-\theta_1,\cdots,-\theta_n$, then
	$$L\sim \sum_{j=1}^{n-1}\bigg(\sum_{i=j}^{n-1}H_i+\frac{H_n}{2}\bigg)\theta_j+\frac{H_n}{2}\theta_n=\sum_{i=1}^{n-1}\bigg(\sum_{j=1}^{i}\theta_j\bigg)H_i+\frac{\sum_{j=1}^n\theta_j}{2}H_n.$$
\end{thm}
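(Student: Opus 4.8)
The plan is to follow the same line of argument as the proof of Theorem~\ref{3.5}. Since ``$\sim$'' denotes ordinary matrix similarity and $L$ is diagonalizable with the prescribed multiset of eigenvalues, $L$ is similar in $\GL(2n+1,\C)$ to the diagonal matrix $D=\mathrm{diag}(0,\theta_1,\dots,\theta_n,-\theta_1,\dots,-\theta_n)$; we are free to fix this particular ordering of the diagonal entries, since permuting them does not change the similarity class. So the whole statement reduces to the purely computational identity
$$D=\sum_{i=1}^{n-1}\Big(\sum_{j=1}^{i}\theta_j\Big)H_i+\tfrac12\Big(\sum_{j=1}^{n}\theta_j\Big)H_n,$$
together with the observation that the two right-hand sides displayed in the statement agree.

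First I would read off the diagonal form of a general element of $\mathfrak h$ from the given Chevalley basis. With the conventions of \cite[Chapter 1]{H1972} the indices $1,\dots,2n+1$ split as $\{1\}$, a ``positive block'' $\{2,\dots,n+1\}$, and a ``negative block'' $\{n+2,\dots,2n+1\}$, and one checks directly that $H_i$ (for $1\le i\le n-1$) is $0$ in position $1$, equals $e_i-e_{i+1}$ on the positive block and $-(e_i-e_{i+1})$ on the negative block, while $H_n$ is $0$ in position $1$ and equals $\pm 2e_n$ on the two blocks, where $e_1,\dots,e_n$ denote the standard coordinate vectors of a block. Hence $\sum_{i=1}^{n-1}c_iH_i+c_nH_n$ is $0$ in position $1$, has positive block $(c_1,\,c_2-c_1,\,\dots,\,c_{n-1}-c_{n-2},\,2c_n-c_{n-1})$, and negative block equal to minus that. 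Matching this with $(\theta_1,\dots,\theta_n)$ gives the triangular system $c_1=\theta_1$, $c_i-c_{i-1}=\theta_i$ for $2\le i\le n-1$, and $2c_n-c_{n-1}=\theta_n$, whose unique solution is $c_i=\sum_{j=1}^{i}\theta_j$ for $i\le n-1$ and $c_n=\tfrac12\big(\theta_n+\sum_{j=1}^{n-1}\theta_j\big)=\tfrac12\sum_{j=1}^{n}\theta_j$. This is exactly the second expression in the statement; the first expression has coefficient $\sum_{j\le i}\theta_j$ in front of $H_i$ (for $i\le n-1$) and coefficient $\tfrac12\sum_{j=1}^{n-1}\theta_j+\tfrac12\theta_n$ in front of $H_n$, so the two forms coincide.

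There is essentially no obstacle here beyond careful bookkeeping; the only points that need attention are the index shift caused by reserving position $1$ for the eigenvalue $0$, and the factor $2$ carried by $H_n$ (which is why $c_n$ picks up a $\tfrac12$). For completeness I would also remark that the hypothesis that $0$ genuinely occurs in the spectrum is consistent with Lemma~\ref{4.1}, since the nonzero eigenvalues pair off with equal multiplicities and $2n+1$ is odd. If one instead wanted the similarity to be realized by an element of the group preserving the defining bilinear form, one could invoke conjugacy of Cartan subalgebras under the adjoint group together with the $\Aut(\mathfrak g)$-invariance of $f_\phi$, but for the statement as given plain matrix similarity is all that is required.
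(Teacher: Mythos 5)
Your proposal is correct and follows essentially the same route as the paper: reduce to the diagonal matrix $\mathrm{diag}(0,\theta_1,\dots,\theta_n,-\theta_1,\dots,-\theta_n)$ and verify the identity inside $\mathfrak h$ by a base-change computation between $\{E_{i+1,i+1}-E_{n+i+1,n+i+1}\}$ and the Chevalley basis $\{H_i\}$. The paper inverts the relations $H_i=F_i-F_{i+1}$, $H_n=2F_n$ directly, while you solve the equivalent triangular system for the coefficients $c_i$; the bookkeeping and the resulting formula are the same.
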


\begin{proof}
	We know that
	$$diag(0, \theta_1,\cdots,\theta_n,-\theta_1,\cdots,-\theta_n)=\sum_{i=1}^n\theta_i(E_{i+1,i+1}-E_{n+i+1,n+i+1}),$$
	with
	$$(E_{i+1,i+1}-E_{n+i+1,n+i+1})=\sum_{i}^{n-1}H_i+\frac{H_n}{2}(0<i<n),\ E_{n+1,n+1}-E_{2n+1,2n+1}=\frac{H_n}{2}.$$
	Therefore we have
	$$L\sim \sum_{j=1}^{n-1}\bigg(\sum_{i=j}^{n-1}H_i+\frac{H_n}{2}\bigg)\theta_j+\frac{H_n}{2}\theta_n=\sum_{i=1}^{n-1}\bigg(\sum_{j=1}^{i}\theta_j\bigg)H_i+\frac{\sum_{j=1}^n\theta_j}{2}H_n.$$
\end{proof}
Let $f_{\phi}(z_0,L)$ be the characteristic polynomial of $\mathfrak{o}(2n+1, \C)$ with respect to the representation $\phi$.
Then,
\begin{eqnarray*}
	f_{\phi}(z_0,L)&=&\det\left(z_0I+\phi\bigg(\sum_{i=1}^{n-1}\bigg(\sum_{j=1}^{i}\theta_j\bigg)H_i+\frac{\sum_{j=1}^n\theta_j}{2}H_n\bigg)\right)\\
	&=&\det\left(z_0I+\sum_{i=1}^{n-1}\bigg(\sum_{j=1}^{i}\theta_j\bigg)\phi(H_i)+\frac{\sum_{j=1}^n\theta_j}{2}\phi(H_n)\right)\\
	&=&\prod_{O_{\mu}\subset orb(V(\lambda))} (f_{\mu})^{m_\lambda(\mu)}
\end{eqnarray*}	
with orbital factor
\begin{eqnarray}\label{form}
	f_{\mu}=\prod_{\gamma(\mu)\in O_{\mu}}\left(z_0I+\sum_{i=1}^{n-1}\bigg(\sum_{j=1}^{i}\theta_j\bigg)\gamma(\mu)(H_i)+\frac{\sum_{j=1}^n\theta_j}{2}\gamma(\mu)(H_n)\right).
\end{eqnarray}

\begin{rem}\label{weyl}
For type $\ddB_n$ and $\ddC_n$, the Weyl group $\mathscr{W}(\ddB_n)=\mathscr{W}(\ddC_n)=\mathfrak{S}_n \ltimes \mathbb{Z}_2^n$ where $\mathbb{Z}_2^n=\{[\epsilon_1,\dots,\epsilon_n] \mid \epsilon_i=\pm1\}$. There is a representation of $\mathscr{W}(\ddB_n)$ on $\mathbb{C}^n$ where $\mathfrak{S}_n$ acts as permutations of coordinates and $\epsilon \in \mathbb{Z}_2^n$ acts by $\epsilon  (\theta_1,\dots,\theta_n)=[\epsilon_1\theta_1,\dots,\epsilon_n\theta_n].$
It's show that $\mathbb{C}[\theta_1,\dots,\theta_{n}]^{\mathfrak{S}_n \ltimes \mathbb{Z}_2^n}=\mathbb{C}[\bar{s}_1,\dots, \bar{s}_n]$ with
$$\bar{s}_i=\sum_{1\leq j_1 < \cdots < j_i \leq n}\theta_{j_1}^2 \cdots \theta_{j_i}^2.$$
For type $\ddD_n$, the Weyl group	$\mathscr{W}(\ddD_n)=\mathfrak{S}_n \ltimes \mathbb{Z}_2^{n-1}$
act on $\mathbb{C}^n$ by the rule that $\mathfrak{S}_n$ permutes coordinates, whereas $\mathbb{Z}_2^{n-1}$  consists of the elements changing the sign on an even number of coordinates, then $\mathbb{C}[\theta_1,\dots,\theta_{n}]^{\mathfrak{S}_n \ltimes \mathbb{Z}_2^{n-1}}=\mathbb{C}[\bar{s}_1,\dots, \bar{s}_{n-1},t]$ where $\bar{s}_i$ is as above and $t=\theta_1\theta_2\cdots\theta_n.$
For the above facts, we refer the reader to reference \cite[Section 16-2]{RK2001}.
\end{rem}

\begin{thm}\label{4.4}
	Extending the $\mathscr{W}(\ddB_{n})\simeq\mathfrak{S}_n\ltimes \mathbb{Z}_2^{n}$ actions on $z_{1,1},\dots, z_{n, n},\dots, z_{i,j},\dots$ to $z_0$ by trivial action.
	The orbit factor $f_{\mu}$ for $\mathfrak{o}(2n+1, \C)$ is a  $\mathscr{W}(\ddB_{n})$ invariant polynomial, 
	so it follows that $f_{\mu} \in \mathbb{C}[\bar{s}_1,\dots, \bar{s}_n].$
	Furthermore, given that $f_\mu$ is a polynomial of $z_0$, the coefficients of $f_\mu$ can be expressed by polynomials of $\bar{s}_1,\dots, \bar{s}_n$.
	And then $f_{\mu}$ is a factor of $f_\phi$.
\end{thm}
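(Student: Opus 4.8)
The plan is to mirror the proof of Theorem \ref{2} and the Remark following it, with the symmetric group $\mathfrak{S}_n\simeq\mathscr{W}(\ddA_{n-1})$ replaced by the hyperoctahedral group $\mathscr{W}(\ddB_n)\simeq\mathfrak{S}_n\ltimes\Z_2^n$. By Theorem \ref{4.9}, a generic diagonalizable $L\in\mathfrak{o}(2n+1, \C)$, with eigenvalues $0,\pm\theta_1,\dots,\pm\theta_n$, satisfies $L\sim X$ where $X=\sum_{i=1}^{n-1}\big(\sum_{j=1}^{i}\theta_j\big)H_i+\tfrac12\big(\sum_{j=1}^{n}\theta_j\big)H_n$, and by formula (\ref{form}) the orbital factor is
$$
f_\mu=\prod_{\gamma\in\mathscr{W}(\ddB_n)/stab(\mu)}\big(z_0+\gamma(\mu)(X)\big),
$$
each factor being well defined on the coset $\gamma\,stab(\mu)$ since $stab(\mu)$ fixes $\mu$. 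As $f_\mu$ is a monic polynomial in $z_0$ of degree $\lvert O_\mu\rvert$, it is enough to show that each of its coefficients, viewed in $\C[\theta_1,\dots,\theta_n]$, is invariant under $\mathscr{W}(\ddB_n)$.

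The key step is to turn the $\mathscr{W}(\ddB_n)$-action into a reindexing of the product. Using $\gamma(\mu)(Y)=\mu(\gamma^{-1}(Y))$ for $Y\in\mathfrak{h}$, together with the identities (\ref{2.1}) (which give $\mu(H_i)=l_i$ for $\mu=\sum_k l_k\beta_k$), I would verify on the explicit Chevalley basis $\{H_i\}$ of $\mathfrak{o}(2n+1, \C)$ that $\gamma^{-1}(X)$ is again of the form $\sum_{i=1}^{n-1}\big(\sum_{j=1}^{i}\theta'_j\big)H_i+\tfrac12\big(\sum_{j=1}^{n}\theta'_j\big)H_n$, where $(\theta'_1,\dots,\theta'_n)$ is the signed permutation of $(\theta_1,\dots,\theta_n)$ corresponding to $\gamma$ under the action of Remark \ref{weyl} ($\mathfrak{S}_n$ permuting the $\theta_j$, the $k$-th $\Z_2$-generator flipping $\theta_k\mapsto-\theta_k$). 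Writing $\theta^{\gamma}$ for that signed permutation, this yields
$$
f_\mu=\prod_{\gamma\in\mathscr{W}(\ddB_n)/stab(\mu)}\Big(z_0+\sum_{i=1}^{n-1}\Big(\sum_{j=1}^{i}\theta^{\gamma}_j\Big)l_i+\frac{\sum_{j=1}^{n}\theta^{\gamma}_j}{2}\,l_n\Big).
$$
One also needs, by the same argument as in Lemma \ref{stabistor}, that $stab(\mu)$ is the subgroup of $\mathscr{W}(\ddB_n)$ generated by the simple reflections $\gamma_j$ with $l_j=0$, so that the products over cosets make sense.

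Granting this, the invariance is formal. For $\sigma\in\mathscr{W}(\ddB_n)$ acting on $\C[\theta_1,\dots,\theta_n]$ by signed permutations and trivially on $z_0$, applying $\sigma$ to $f_\mu$ only replaces each $\theta^{\gamma}$ by $\theta^{\gamma'}$ for a corresponding coset $\gamma'$; since left translation is a bijection of $\mathscr{W}(\ddB_n)/stab(\mu)$, the factors of $f_\mu$ are merely permuted, whence $\sigma(f_\mu)=f_\mu$, exactly as in the proof of Theorem \ref{2}. Therefore each coefficient of $f_\mu$ in $z_0$ lies in $\C[\theta_1,\dots,\theta_n]^{\mathscr{W}(\ddB_n)}=\C[\bar s_1,\dots,\bar s_n]$ by Remark \ref{weyl}. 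Finally, by Lemma \ref{4.1} one has $\det(z_0I-L)=z_0\prod_{i=1}^{n}(z_0^2-\theta_i^2)=\sum_{j=0}^{n}(-1)^j\bar s_j\,z_0^{2(n-j)+1}$, so $\bar s_1,\dots,\bar s_n$ are, up to sign, coefficients of the characteristic polynomial of $L$ and hence, by Vieta's theorem, polynomials in the entry-variables $z_{1,1},\dots,z_{n,n},\dots,z_{i,j},\dots$; substituting, the coefficients of $f_\mu$ become polynomials in these variables, and since $f_\phi=\prod_{O_\mu\subset orb(V(\lambda))}(f_\mu)^{m_\lambda(\mu)}$, we conclude that $f_\mu$ is a polynomial factor of $f_\phi$.

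The main obstacle is the second paragraph: verifying from the given realization of $\mathfrak{o}(2n+1, \C)$ that the $\mathscr{W}(\ddB_n)$-action on $\mathfrak{h}$ is precisely the signed-permutation action on the eigenvalue parameters $\theta_j$, keeping careful track of the coefficient $\tfrac12$ on $H_n$ (this half is exactly what lets the sign change $\theta_n\mapsto-\theta_n$ stay inside the lattice, and also why $\bar s_n$ rather than $t=\theta_1\cdots\theta_n$ is the relevant invariant here), and likewise pinning down $stab(\mu)$. Everything else is the same bookkeeping as in type $\ddA$; the analogous statements for $\mathfrak{sp}(2n, \C)$ and $\mathfrak{o}(2n, \C)$ would follow by the same route, using the diagonal models for those algebras and the invariant rings $\C[\bar s_1,\dots,\bar s_n]$, respectively $\C[\bar s_1,\dots,\bar s_{n-1},t]$, of Remark \ref{weyl}.
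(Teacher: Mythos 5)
Your proposal is correct and follows essentially the same route as the paper's proof: the paper likewise uses $\tau(\mu)(Y)=\mu(\tau^{-1}(Y))$ to convert the $\mathscr{W}(\ddB_n)$-action into the signed-permutation $\theta_j\mapsto\epsilon_j\theta_{\gamma(j)}$ on the eigenvalue parameters, rewrites $f_\mu$ as a product over cosets of $stab(\mu)$ so that invariance follows from reindexing, and then invokes $\C[\theta_1,\dots,\theta_n]^{\mathscr{W}(\ddB_n)}=\C[\bar s_1,\dots,\bar s_n]$. Your closing observation that $\bar s_1,\dots,\bar s_n$ are, via Lemma \ref{4.1} and Vieta, polynomials in the entry-variables is left implicit in the paper but is the same bookkeeping.
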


\begin{proof}
	$\forall \tau=\gamma\ltimes\epsilon \in \mathscr{W}(\ddB_n)$ with $\gamma \in \mathfrak{S}_n$, there is the following fact,
	
	\begin{eqnarray*}
		&&\tau (\mu)\left(\sum_{i=1}^{n-1}\bigg(\sum_{j=1}^{i}\theta_j\bigg)H_i+\frac{\sum_{j=1}^n\theta_j}{2}H_n\right)\\
		&=&\left(\sum_{k=1}^{n}l_k\tau(\beta_k)\right)  \left(\sum_{i=1}^{n-1}\bigg(\sum_{j=1}^{i}\theta_j\bigg)H_i+\frac{\sum_{j=1}^n\theta_j}{2}H_n\right)\\
		&=&\mu \left(\tau^{-1}\bigg(\sum_{i=1}^{n-1}\bigg(\sum_{j=1}^{i}\theta_j\bigg)H_i+\frac{\sum_{j=1}^n\theta_j}{2}H_n\bigg)\right)\\
		&=&\mu\left(\tau^{-1}(diag(0, \theta_1, \cdots, \theta_{n}, -\theta_1, \cdots, -\theta_{n} )) \right) \\
		&=&\mu\left(diag(0, \epsilon_1\theta_{\gamma(1)},  \cdots , \epsilon_n\theta_{\gamma(n)}, -\epsilon_1\theta_{\gamma(1)},  \cdots , -\epsilon_n\theta_{\gamma(n)}) \right) \\
		&=&\sum_{i=1}^{n-1}\bigg(\sum_{j=1}^{i}\epsilon_j\theta_{\gamma(j)}\bigg)\mu(H_i)+\frac{\sum_{j=1}^n\epsilon_j\theta_{\gamma(j)}}{2}\mu(H_n).\\
	\end{eqnarray*}
	
	
	Then we have
	\begin{eqnarray*}
		f_{\mu}&=&\prod_{\gamma(\mu)\in O_{\mu}}\left(z_0I+\sum_{i=1}^{n-1}\bigg(\sum_{j=1}^{i}\theta_j\bigg)\gamma(\mu)(H_i)+\frac{\sum_{j=1}^n\theta_j}{2}\gamma(\mu)(H_n)\right)\\
		&=&\prod_{\tau \in \mathscr{W}(\ddB_n)/stab(\mu)}\left(z_0+\tau(\mu)\bigg(\sum_{i=1}^{n-1}\bigg(\sum_{j=1}^{i}\theta_j\bigg)H_i+\frac{\sum_{j=1}^n\theta_j}{2}H_n\bigg)\right)\\
		&=&\prod_{\gamma\ltimes\epsilon \in \gamma \in \mathscr{W}(\ddB_n)/stab(\mu)}\left(z_0+\sum_{i=1}^{n-1}\bigg(\sum_{j=1}^{i}\epsilon_j\theta_{\gamma(j)}\bigg)\mu(H_i)+\frac{\sum_{j=1}^n\epsilon_j\theta_{\gamma(j)}}{2}\mu(H_n)\right) \\
		&=&\prod_{\gamma\ltimes\epsilon \in \gamma \in \mathscr{W}(\ddB_n)/stab(\mu)}\left(z_0+\sum_{i=1}^{n-1}\bigg(\sum_{j=1}^{i}\epsilon_j\theta_{\gamma(j)}\bigg)l_i+\frac{\sum_{j=1}^n\epsilon_j\theta_{\gamma(j)}}{2}l_n\right) \\
		&=&\prod_{\gamma\ltimes\epsilon \in \gamma \in \mathscr{W}(\ddB_n)/stab(\mu)}\left(z_0^2-\bigg(\sum_{i=1}^{n-1}\bigg(\sum_{j=1}^{i}\epsilon_j\theta_{\gamma(j)}\bigg)l_i+\frac{\sum_{j=1}^{n-1}\epsilon_j\theta_{\gamma(j)}+\theta_{n}}{2}l_n\bigg)^2\right).
	\end{eqnarray*}	
	So we know that the coefficient of $f_{\mu}$ as the polynomial of $z_0$ is a $\mathscr{W}(\ddB_n)$ invariant polynomial and can be written uniquely as an algebraic expression in $\{\bar{s}_1, \dots, \bar{s}_{n}\}$ from \cite[Section 16-2]{RK2001}.
\end{proof}

Through the same argument, there are also the following two theorems.

\begin{thm}\label{4.10}
	Let $\mathfrak{h}$ be the Cartan subalgebra of classical Lie algebra  $\mathfrak{sp}(2n, \C)$  with the canonical basis $\{H_i\}_{i=1}^{n}$ and $L\in\mathfrak{sp}(2n, \C)$  can be diagonalizable with eigenvalues $ \theta_1,\cdots,\theta_n, -\theta_1,\cdots,$
	$-\theta_n,$ then
	$$L\sim \sum_{j=1}^{n}\bigg(\sum_{i=j}^{n}H_i\bigg)\theta_j=\sum_{i=1}^{n}\bigg(\sum_{j=1}^{i}\theta_j\bigg)H_i.$$
\end{thm}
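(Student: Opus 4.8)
The plan is to mimic exactly the argument already used for $\mathfrak{sl}(n,\C)$ in Theorem \ref{3.5} and for $\mathfrak{o}(2n+1,\C)$ in Theorem \ref{4.9}. First I would recall the explicit Chevalley basis of the Cartan subalgebra $\mathfrak{h}$ of $\mathfrak{sp}(2n,\C)$ listed above, namely $H_i = E_{i,i}-E_{i+1,i+1}-E_{n+i,n+i}+E_{n+i+1,n+i+1}$ for $1\le i\le n-1$ and $H_n = E_{n,n}-E_{2n,2n}$. The key computational observation is that the diagonal matrix $\mathrm{diag}(\theta_1,\dots,\theta_n,-\theta_1,\dots,-\theta_n)$, which lies in $\mathfrak{h}$ because $\mathfrak{sp}(2n,\C)$ is realized with the form $J$ as in Lemma \ref{4.1}, can be rewritten as $\sum_{i=1}^n \theta_i\,(E_{i,i}-E_{n+i,n+i})$, and each building block satisfies $E_{i,i}-E_{n+i,n+i} = \sum_{k=i}^{n} H_k$ for $1\le i\le n$ (telescoping: $H_i+H_{i+1}+\cdots+H_n$ collapses to $E_{i,i}-E_{n+i,n+i}$, with the $H_n$ term supplying the final $E_{n,n}-E_{2n,2n}$).

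The second step is to substitute this identity and reorganize the double sum. Since $L$ is diagonalizable with eigenvalues $\theta_1,\dots,\theta_n,-\theta_1,\dots,-\theta_n$ (the symmetry of the spectrum being exactly the content of Lemma \ref{4.1}), $L$ is conjugate in $\mathfrak{sp}(2n,\C)$ to this diagonal matrix; one must note that conjugation can be performed inside the symplectic group because any ordered eigenbasis can be rescaled to a symplectic basis adapted to $J$, so the similarity is realized by an element of $Sp(2n,\C)$ and the image stays in $\mathfrak{h}$. Then
\begin{equation*}
	L \sim \sum_{i=1}^{n}\theta_i\bigg(\sum_{k=i}^{n}H_k\bigg) = \sum_{j=1}^{n}\bigg(\sum_{i=j}^{n}H_i\bigg)\theta_j,
\end{equation*}
and swapping the order of summation — collecting, for each fixed $k$, all $\theta_j$ with $j\le k$ — turns this into $\sum_{i=1}^{n}\big(\sum_{j=1}^{i}\theta_j\big)H_i$, which is the asserted formula.

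I do not expect a serious obstacle here: unlike the $\ddB_n$ case there is no factor of $2$ to track (the long root is the last node, and $H_n$ already equals $E_{n,n}-E_{2n,2n}$ without doubling), so the telescoping is cleaner than in Theorem \ref{4.9}. The only point that needs a word of care is the justification that the diagonalization respects the symplectic structure, i.e. that the conjugating matrix may be taken in $Sp(2n,\C)$ rather than merely in $GL(2n,\C)$; this follows because eigenvectors for $\theta_j$ and $\theta_k$ are $J$-orthogonal whenever $\theta_j+\theta_k\neq 0$, so the eigenspaces for $\theta_j$ and $-\theta_j$ pair nondegenerately and can be put in standard symplectic form. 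As in Theorem \ref{3.5}, the statement is really just a normal-form bookkeeping lemma, and the phrase ``by computation'' in the style of the earlier proofs suffices once the telescoping identity for $E_{i,i}-E_{n+i,n+i}$ is recorded.
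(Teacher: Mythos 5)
Your proposal is correct and follows essentially the same route as the paper, which proves this "through the same argument" as Theorem \ref{4.9}: write $\mathrm{diag}(\theta_1,\dots,\theta_n,-\theta_1,\dots,-\theta_n)=\sum_{i=1}^n\theta_i(E_{i,i}-E_{n+i,n+i})$ and use the telescoping identity $E_{i,i}-E_{n+i,n+i}=\sum_{k=i}^{n}H_k$ before swapping the order of summation. Your additional remark that the conjugation can be taken inside $Sp(2n,\C)$ goes beyond what the paper records (its $\sim$ is just matrix similarity), but it is accurate and harmless.
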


\begin{thm}\label{4.11}
	Let $\mathfrak{h}$ be the Cartan subalgebra of classical Lie algebra $\mathfrak{o}(2n, \C) (n \geq 2)$ with the canonical basis $\{H_i\}_{i=1}^{n}$ and $L\in\mathfrak{o}(2n, \C)$  can be diagonalizable with eigenvalues $ \theta_1,\cdots,\theta_n, $
	$-\theta_1,\cdots,-\theta_n,$ then
	\begin{eqnarray*}
		L&\sim& \sum_{j=1}^{n-2}\bigg(\sum_{i=j}^{n-2}H_i+\frac{H_{n-1}+H_n}{2}\bigg)\theta_j+\frac{H_{n-1}+H_n}{2}\theta_{n-1}+\frac{H_n-H_{n-1}}{2}\theta_n\\
		&=&\sum_{i=1}^{n-2}\bigg(\sum_{j=1}^{i}\theta_j\bigg)H_i+\frac{\sum_{j=1}^{n-1}\theta_j-\theta_n}{2}H_{n-1}+\frac{\sum_{j=1}^{n}\theta_j}{2}H_n.
	\end{eqnarray*}
\end{thm}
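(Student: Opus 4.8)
The plan is to mimic the proofs of Theorems \ref{4.9} and \ref{4.10}, the only new feature being the slightly asymmetric role of the last two simple coroots $H_{n-1}$ and $H_n$ in the Chevalley basis of $\mathfrak{o}(2n,\C)$ displayed above. Since $L$ is diagonalizable with eigenvalues $\theta_1,\dots,\theta_n,-\theta_1,\dots,-\theta_n$ (the pairing of eigenvalues into $\pm$ pairs being guaranteed by Lemma \ref{4.1}), we have $L\sim\mathrm{diag}(\theta_1,\dots,\theta_n,-\theta_1,\dots,-\theta_n)$, and it suffices to express this diagonal matrix in terms of the basis $\{H_i\}_{i=1}^n$. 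First I would write the diagonal matrix as
\begin{equation*}
\mathrm{diag}(\theta_1,\dots,\theta_n,-\theta_1,\dots,-\theta_n)=\sum_{i=1}^{n}\theta_i\bigl(E_{i,i}-E_{n+i,n+i}\bigr),
\end{equation*}
so the whole computation reduces to expressing each $D_i:=E_{i,i}-E_{n+i,n+i}$ in the basis $\{H_i\}$.

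The key step is the following elementary identity in the Cartan subalgebra. For $1\le i\le n-1$ we have $H_i=D_i-D_{i+1}$, while $H_n=E_{n-1,n-1}+E_{n,n}-E_{2n-1,2n-1}+E_{2n,2n}=D_{n-1}+D_n$. Hence, solving this linear system, $D_{n-1}=\tfrac{1}{2}(H_n+H_{n-1})$ and $D_n=\tfrac12(H_n-H_{n-1})$; and for $j\le n-2$ a telescoping sum gives $D_j=\sum_{i=j}^{n-2}H_i+D_{n-1}=\sum_{i=j}^{n-2}H_i+\tfrac{1}{2}(H_{n-1}+H_n)$. Substituting these expressions into $\sum_{i=1}^{n}\theta_i D_i$ and collecting the coefficient of each $H_i$ yields
\begin{equation*}
L\sim\sum_{j=1}^{n-2}\Bigl(\sum_{i=j}^{n-2}H_i+\tfrac{H_{n-1}+H_n}{2}\Bigr)\theta_j+\tfrac{H_{n-1}+H_n}{2}\theta_{n-1}+\tfrac{H_n-H_{n-1}}{2}\theta_n,
\end{equation*}
and a second rearrangement, grouping by $H_i$ rather than by $\theta_j$, produces the stated closed form $\sum_{i=1}^{n-2}\bigl(\sum_{j=1}^{i}\theta_j\bigr)H_i+\tfrac{1}{2}\bigl(\sum_{j=1}^{n-1}\theta_j-\theta_n\bigr)H_{n-1}+\tfrac{1}{2}\bigl(\sum_{j=1}^{n}\theta_j\bigr)H_n$.

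I do not anticipate a genuine obstacle here: the argument is purely linear algebra inside $\mathfrak{h}$, and the only point requiring a little care is bookkeeping the coefficients of $H_{n-1}$ and $H_n$ correctly, since these are the two coroots attached to the fork at the end of the $\ddD_n$ diagram and enter with the half-integer weights $\tfrac12(\theta_1+\cdots+\theta_{n-1}\mp\theta_n)$ rather than the uniform pattern seen in types $\ddA$ and $\ddC$. One should also note, as in Remark \ref{3.4} and the $\ddB_n$ discussion, that diagonalizable elements are Zariski-dense in $\mathfrak{o}(2n,\C)$, so establishing the similarity for such $L$ is all that is needed for the subsequent factorization of $f_\phi$ into orbital factors; but that density remark is only invoked later, not within the proof of this theorem itself.
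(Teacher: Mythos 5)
Your proof is correct and follows exactly the route the paper intends: the paper proves Theorem \ref{4.9} by writing the diagonal matrix as $\sum_i\theta_i(E_{i,i}-E_{n+i,n+i})$ and expressing those differences in the $H_i$, and it states that Theorem \ref{4.11} follows ``through the same argument,'' which is precisely your telescoping computation with $D_i=E_{i,i}-E_{n+i,n+i}$, $H_i=D_i-D_{i+1}$, and $H_n=D_{n-1}+D_n$. The only blemish is inherited from the paper's own typo in the displayed $H_n$ for $\mathfrak{o}(2n,\C)$ (the last term must be $-E_{2n,2n}$ for the identity $H_n=D_{n-1}+D_n$ that you use to hold), and your bookkeeping of the coefficients of $H_{n-1}$ and $H_n$ is correct.
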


\begin{thm}\label{4.13}
	Extending the $\mathscr{W}(\ddC_{n})\simeq\mathfrak{S}_n\ltimes \mathbb{Z}_2^{n}$ actions on $z_{1,1},\dots, z_{n, n}, \dots, z_{i,j}, \dots$ to $z_0$ by trivial action.
	The orbit factor $f_{\mu}$ for $\mathfrak{sp}(2n, \C)$ is a  $\mathscr{W}(\ddC_{n})$ invariant polynomial, 
	so it follows that $f_{\mu} \in \mathbb{C}[\bar{s}_1,\dots, \bar{s}_n].$
	Furthermore, given that $f_\mu$ is a polynomial of $z_0$, the coefficients of $f_\mu$ can be expressed by polynomials of $\bar{s}_1,\dots, \bar{s}_n$.
	And then $f_{\mu}$ is a factor of $f_\phi$.
\end{thm}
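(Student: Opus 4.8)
The plan is to follow verbatim the argument used for Theorem \ref{4.4}, replacing the diagonalization of Theorem \ref{4.9} by that of Theorem \ref{4.10} and the description of $\mathbb{C}[\theta_1,\dots,\theta_n]^{\mathscr{W}(\ddB_n)}$ by the one for $\mathscr{W}(\ddC_n)=\mathfrak{S}_n\ltimes\mathbb{Z}_2^n$ recorded in Remark \ref{weyl}. First I would reduce to diagonalizable $L\in\mathfrak{sp}(2n,\C)$: by Remark \ref{3.4} such elements are Zariski-dense in $\mathfrak{sp}(2n,\C)$, so it is enough to establish every asserted identity there. For a diagonalizable $L$ with eigenvalues $\theta_1,\dots,\theta_n,-\theta_1,\dots,-\theta_n$, Theorem \ref{4.10} gives $L\sim\sum_{i=1}^n\big(\sum_{j=1}^i\theta_j\big)H_i=\mathrm{diag}(\theta_1,\dots,\theta_n,-\theta_1,\dots,-\theta_n)$, so that, exactly as in Section \ref{An},
$$f_\phi(z_0,L)=\det\Big(z_0I+\sum_{i=1}^n\big(\textstyle\sum_{j=1}^i\theta_j\big)\phi(H_i)\Big)=\prod_{O_\mu\subset orb(V(\lambda))}(f_\mu)^{m_\lambda(\mu)},$$
with orbital factor $f_\mu=\prod_{\gamma(\mu)\in O_\mu}\big(z_0+\sum_{i=1}^n(\sum_{j=1}^i\theta_j)\gamma(\mu)(H_i)\big)$.

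Next I would carry out the key computation: for $\tau=\gamma\ltimes\epsilon\in\mathscr{W}(\ddC_n)$ and $\mu=\sum_{k=1}^n l_k\beta_k$, using the duality between the action of $\mathscr{W}(\ddC_n)$ on weights and on $\mathfrak{h}$ together with $(\ref{2.1})$,
$$\tau(\mu)\Big(\sum_{i=1}^n\big(\textstyle\sum_{j=1}^i\theta_j\big)H_i\Big)=\mu\big(\tau^{-1}\mathrm{diag}(\theta_1,\dots,-\theta_n)\big)=\mu\big(\mathrm{diag}(\epsilon_1\theta_{\gamma(1)},\dots,-\epsilon_n\theta_{\gamma(n)})\big)=\sum_{i=1}^n\big(\textstyle\sum_{j=1}^i\epsilon_j\theta_{\gamma(j)}\big)l_i,$$
since $\mu(H_i)=l_i$ by $(\ref{2.1})$. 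Hence $f_\mu=\prod_{\gamma\ltimes\epsilon\in\mathscr{W}(\ddC_n)/stab(\mu)}\big(z_0+\sum_{i=1}^n(\sum_{j=1}^i\epsilon_j\theta_{\gamma(j)})l_i\big)$, and since the central element $[-1,\dots,-1]\in\mathbb{Z}_2^n$ lies in $\mathscr{W}(\ddC_n)$ the cosets pair up, so one may rewrite $f_\mu=\prod\big(z_0^2-(\sum_{i=1}^n(\sum_{j=1}^i\epsilon_j\theta_{\gamma(j)})l_i)^2\big)$, which also recovers the "even powers of $z_0$" statement of Lemma \ref{4.1}.

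Finally, any $\sigma\in\mathscr{W}(\ddC_n)$ applied to $f_\mu$ merely permutes the coset representatives of $\mathscr{W}(\ddC_n)/stab(\mu)$, so $\sigma(f_\mu)=f_\mu$; therefore each coefficient of $f_\mu$, regarded as a polynomial in $z_0$, lies in $\mathbb{C}[\theta_1,\dots,\theta_n]^{\mathscr{W}(\ddC_n)}=\mathbb{C}[\bar s_1,\dots,\bar s_n]$ by Remark \ref{weyl}. By Vieta's theorem applied to $\det(z_0I-L)$ the $\bar s_i$ are polynomials in the matrix entries $z_{i,j}$ of $L$, so $f_\mu$ is a genuine polynomial in those indeterminates, and the factorization $f_\phi=\prod(f_\mu)^{m_\lambda(\mu)}$ exhibits it as a factor of $f_\phi$. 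The one point I would verify most carefully — everything else being formally identical to the proof of Theorem \ref{4.4} — is the middle display: that with the chosen Chevalley basis $\{H_i\}_{i=1}^n$ of $\mathfrak{sp}(2n,\C)$ and the integral basis $\{\beta_k\}$ of $\Lambda(\ddC_n)$, the $\mathfrak{S}_n\ltimes\mathbb{Z}_2^n$-action on $\mathrm{diag}(\theta_1,\dots,-\theta_n)$ really is the sign-and-permutation action on the $\theta_j$, so that $\tau^{-1}$ has the stated effect and $\mu(H_i)=l_i$ holds; I expect this bookkeeping to be the only genuine obstacle.
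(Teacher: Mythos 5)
Your proposal is correct and follows essentially the same route as the paper: the paper itself disposes of Theorem \ref{4.13} with the phrase ``through the same argument,'' meaning precisely the proof of Theorem \ref{4.4} with the diagonalization of Theorem \ref{4.10} in place of Theorem \ref{4.9} and with $\mathbb{C}[\theta_1,\dots,\theta_n]^{\mathscr{W}(\ddC_n)}=\mathbb{C}[\bar s_1,\dots,\bar s_n]$ from Remark \ref{weyl}. Your added checks (the pairing of cosets via $[-1,\dots,-1]\in\mathbb{Z}_2^n$, $\mu(H_i)=l_i$ from (\ref{2.1}), and Vieta giving $\bar s_i$ as polynomials in the $z_{i,j}$) are exactly the bookkeeping the paper leaves implicit.
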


\begin{thm}\label{4.14}
	Extending the $\mathscr{W}(\ddD_{n})\simeq\mathfrak{S}_n\ltimes \mathbb{Z}_2^{n-1}$ actions on $z_{1,1},\dots, z_{n, n},\dots, z_{i,j}, \dots$ to $z_0$ by trivial action.
	The orbit factor $f_{\mu}$ for $\mathfrak{sp}(2n, \C)$ is a  $\mathscr{W}(\ddD_{n})$ invariant polynomial, 
	so it follows that $f_{\mu} \in \mathbb{C}[\bar{s}_1,\dots, \bar{s}_{n-1},t].$
	Furthermore, given that $f_\mu$ is a polynomial of $z_0$, the coefficients of $f_\mu$ can be expressed by polynomials of $\bar{s}_1,\dots, \bar{s}_{n-1},t$.
	And then $f_{\mu}$ is a factor of $f_\phi$.
\end{thm}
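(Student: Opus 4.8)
The plan is to mimic, almost verbatim, the proofs of Theorems \ref{4.4} and \ref{4.13}, replacing the Weyl group $\mathscr{W}(\ddB_n)$ by $\mathscr{W}(\ddD_n)=\mathfrak{S}_n\ltimes\mathbb{Z}_2^{n-1}$ and the diagonalization used there by that of Theorem \ref{4.11}. First I would reduce, via Remark \ref{3.4} and Lemma \ref{4.1}, to a diagonalizable $L\in\mathfrak{o}(2n,\C)$ with eigenvalues $\theta_1,\dots,\theta_n,-\theta_1,\dots,-\theta_n$; by Theorem \ref{4.11} such an $L$ is conjugate to the explicit Cartan element
$$M_\theta=\sum_{i=1}^{n-2}\bigg(\sum_{j=1}^{i}\theta_j\bigg)H_i+\frac{\sum_{j=1}^{n-1}\theta_j-\theta_n}{2}H_{n-1}+\frac{\sum_{j=1}^{n}\theta_j}{2}H_n,$$
so that $f_{\mu}=\prod_{\gamma(\mu)\in O_{\mu}}\big(z_0+\gamma(\mu)(M_\theta)\big)$, the product running over $\mathscr{W}(\ddD_n)/stab(\mu)$.

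Next I would compute, for $\tau=\gamma\ltimes\epsilon\in\mathscr{W}(\ddD_n)$ and $\mu=\sum_{k=1}^{n}l_k\beta_k$, the value $\tau(\mu)(M_\theta)=\mu(\tau^{-1}(M_\theta))$. Identifying $M_\theta$ with $\mathrm{diag}(\theta_1,\dots,\theta_n,-\theta_1,\dots,-\theta_n)$ and using the action of $\mathscr{W}(\ddD_n)$ on $\mathbb{C}^n$ recalled in Remark \ref{weyl}, together with $\beta_k(H_i)=\delta_{ki}$ from (\ref{2.1}), one gets
$$\tau(\mu)(M_\theta)=\sum_{i=1}^{n-2}\bigg(\sum_{j=1}^{i}\epsilon_j\theta_{\gamma(j)}\bigg)l_i+\frac{\sum_{j=1}^{n-1}\epsilon_j\theta_{\gamma(j)}-\epsilon_n\theta_{\gamma(n)}}{2}l_{n-1}+\frac{\sum_{j=1}^{n}\epsilon_j\theta_{\gamma(j)}}{2}l_n,$$
where $(\epsilon_1,\dots,\epsilon_n)\in\{\pm1\}^n$ has an even number of entries equal to $-1$. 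Since for any $\sigma\in\mathscr{W}(\ddD_n)$ the map $\tau\mapsto\sigma\tau$ merely permutes a set of coset representatives of $\mathscr{W}(\ddD_n)/stab(\mu)$, it follows that $\sigma(f_{\mu})=f_{\mu}$, i.e. $f_{\mu}$ is a $\mathscr{W}(\ddD_n)$-invariant polynomial in $\theta_1,\dots,\theta_n$ (the action on $z_0$ being trivial). The even-sign-change constraint, the only structural difference from type $\ddB_n$, plays no role here, because $\mathscr{W}(\ddD_n)$ still acts on itself by left translation.

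Then, regarding $f_{\mu}$ as a polynomial in $z_0$, each of its coefficients is a $\mathscr{W}(\ddD_n)$-invariant element of $\mathbb{C}[\theta_1,\dots,\theta_n]$, hence lies in $\mathbb{C}[\bar{s}_1,\dots,\bar{s}_{n-1},t]$ by Remark \ref{weyl} and \cite[Section 16-2]{RK2001}. Finally I would check that this forces $f_{\mu}$ to be an honest polynomial factor of $f_\phi$: by Vieta applied to $\det(z_0I-L)=\prod_{j=1}^{n}(z_0^2-\theta_j^2)$ (Lemma \ref{4.1}) the generators $\bar{s}_i=e_i(\theta_1^2,\dots,\theta_n^2)$ already lie in $\mathbb{C}[z_{1,1},\dots,z_{n,n},\dots,z_{i,j},\dots]$, and the remaining generator $t=\theta_1\cdots\theta_n$ is, up to a nonzero constant, the Pfaffian of $L$ with respect to the defining skew form, hence also polynomial in the entries $z_{i,j}$. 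Consequently every coefficient of $f_{\mu}$ lies in $\mathbb{C}[z_0,z_{i,j}]$, and $f_{\mu}$ divides $f_\phi$ there.

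I expect the one genuinely new point, compared with the $\ddB_n$ and $\ddC_n$ cases, to be exactly this last step: the invariant ring of $\mathscr{W}(\ddD_n)$ needs the extra generator $t$, so one must exhibit $t$ — equivalently, any coefficient of $f_{\mu}$ in which $t$ occurs to an odd power — as a polynomial in the matrix entries rather than merely an element algebraic over $\mathbb{C}[z_{i,j}]$. Invoking the Pfaffian (equivalently, the Chevalley restriction theorem identifying $\mathscr{W}(\ddD_n)$-invariants on $\mathfrak{h}$ with $\mathrm{Ad}$-invariant polynomials on $\mathfrak{o}(2n,\C)$) settles this; everything else is a line-by-line transcription of the computation in Theorem \ref{4.4}.
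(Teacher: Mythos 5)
Your proposal is correct and follows essentially the same route as the paper: the paper gives no separate proof of Theorem \ref{4.14}, deferring to ``the same argument'' as Theorem \ref{4.4}, which is precisely the computation you transcribe with $\mathscr{W}(\ddD_{n})\simeq\mathfrak{S}_n\ltimes \mathbb{Z}_2^{n-1}$ acting with an even number of sign changes and the diagonalization of Theorem \ref{4.11} in place of Theorem \ref{4.9}. The one place where you go beyond the paper is the observation that the extra generator $t=\theta_1\cdots\theta_n$ of the type $\ddD_n$ invariant ring is not delivered by Vieta's formulas applied to $\det(z_0I-L)=\prod_{j=1}^{n}(z_0^2-\theta_j^2)$ (which only yields $t^2$) and must be exhibited separately as a polynomial in the entries $z_{i,j}$, e.g.\ via the Pfaffian of the skew-symmetrization of $L$; this correctly identifies and fills a step the paper leaves implicit, and it is exactly the point needed for orbital factors of spin-type weights, where $t$ occurs to odd powers. (Incidentally, the occurrence of $\mathfrak{sp}(2n,\C)$ in the statement of Theorem \ref{4.14} should read $\mathfrak{o}(2n,\C)$.)
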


From Theorem \ref{irreducible} and the above conclusions, the theorem below holds.
\begin{thm}\label{4.15}
Let $\mathfrak{g}$ be the classical Lie algebra with a Chevalley basis, $\mu_1, \dots, \mu_s$ be the different dominant weights of irreducible $\mathfrak{g}$-module $V$ and $k_i$ be the size of the orbit of $\mu_i$ under the Weyl group  $\mathscr{W}$, $i=1, \dots, s$.
Then the characteristic polynomial
$$f_{\phi}(z_0,L)=(f_{\mu_1})^{m_\lambda(\mu_1)}\cdots (f_{\mu_s})^{m_\lambda(\mu_s)},$$
with $f_{\mu_i}$ irreducible of degree $k_i=\lvert O_{\mu_i} \rvert$ obtained from the orbit of $\mu_i$.
\end{thm}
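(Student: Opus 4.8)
The plan is to deduce the theorem from the structural results already established for each type, so that only two genuinely new points remain: the global factorization $f_\phi=\prod_i(f_{\mu_i})^{m_\lambda(\mu_i)}$ and the irreducibility of each $f_{\mu_i}$. For the factorization I would first reduce to diagonalizable $L$: the diagonalizable elements are Zariski dense in $\mathfrak g$ and both sides of the identity are polynomials in the entries $z_{i,j}$ of $L$, so it is enough to check equality there. For diagonalizable $L$, Theorems~\ref{4.9}, \ref{4.10}, \ref{4.11} (and Theorem~\ref{3.5} in type~$\ddA$) give $L\sim L_{\mathrm{diag}}$ with $L_{\mathrm{diag}}$ an explicit combination of the Chevalley generators $H_i$ whose coefficients are linear in the eigenvalues $\theta_1,\dots,\theta_n$; consequently the eigenvalues of $\phi(L)$ are exactly the numbers $\nu(L_{\mathrm{diag}})$ for $\nu$ ranging over the weight set $\Pi(\lambda)$, each with multiplicity $m_\lambda(\nu)$. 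Partitioning $\Pi(\lambda)=\bigsqcup_{i=1}^s O_{\mu_i}$ into $\mathscr W$-orbits, where $\mu_i$ is the unique dominant weight in $O_{\mu_i}$ and $m_\lambda$ is constant on $O_{\mu_i}$, yields
\begin{align*}
	f_\phi(z_0,L)&=\prod_{\nu\in\Pi(\lambda)}\bigl(z_0+\nu(L_{\mathrm{diag}})\bigr)^{m_\lambda(\nu)}\\
	&=\prod_{i=1}^s\Bigl(\prod_{\nu\in O_{\mu_i}}\bigl(z_0+\nu(L_{\mathrm{diag}})\bigr)\Bigr)^{m_\lambda(\mu_i)}=\prod_{i=1}^s\bigl(f_{\mu_i}\bigr)^{m_\lambda(\mu_i)},
\end{align*}
and the degree count $\deg f_{\mu_i}=|O_{\mu_i}|=k_i$ is immediate from the inner product over $O_{\mu_i}$.

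It remains to prove that each $f_{\mu_i}$ is irreducible; this is the heart of the matter, and I would mirror the Galois-theoretic argument of Theorem~\ref{3.10}. Let $F=\C(z_{1,1},\dots,z_{i,j},\dots)$ be the field of rational functions in the entries of $L$, regard $\det(z_0I-L)$ and each orbital factor as a polynomial in $z_0$ over $F$, and let $K$ be the splitting field of $\det(z_0I-L)$ over $F$; by Lemma~\ref{4.1} its roots are $\pm\theta_1,\dots,\pm\theta_n$ (together with a forced $0$ in type~$\ddB$), so $K=F(\theta_1,\dots,\theta_n)$. The crucial step is to identify $\mathrm{Gal}(K/F)$ with the Weyl group $\mathscr W$: since the $z_{i,j}$ are independent indeterminates, the only relations among the $\theta_j$ that hold over $F$ are those intrinsic to the eigenvalue multiset, so for $\mathfrak o(2n+1,\C)$ and $\mathfrak{sp}(2n,\C)$ one gets $\mathrm{Gal}(K/F)=\mathfrak S_n\ltimes\Z_2^n=\mathscr W(\ddB_n)=\mathscr W(\ddC_n)$, whereas for $\mathfrak o(2n,\C)$ the Pfaffian of $L$ with respect to its defining form is a polynomial in the entries equal up to a nonzero scalar to $\theta_1\cdots\theta_n$, pinning this product down inside $F$ and forcing $\mathrm{Gal}(K/F)$ into the even-sign-change subgroup $\mathfrak S_n\ltimes\Z_2^{n-1}=\mathscr W(\ddD_n)$, precisely the invariance group appearing in Theorem~\ref{4.14}.

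With this identification I would conclude as in Theorem~\ref{3.10}. By Theorems~\ref{4.4}, \ref{4.13}, \ref{4.14} (and Theorem~\ref{2} in type~$\ddA$), $f_{\mu_i}$ is monic in $z_0$ of degree $|O_{\mu_i}|$ and $\mathscr W$-invariant, so its coefficients lie in the invariant ring (one of $\C[s_1,\dots,s_n]$, $\C[\bar s_1,\dots,\bar s_n]$, $\C[\bar s_1,\dots,\bar s_{n-1},t]$) and hence in $F$; being monic in $z_0$ it is primitive, so by Gauss's lemma irreducibility over $F$ is equivalent to irreducibility as a polynomial in $z_0$ and the entries. Over $K$ we have $f_{\mu_i}=\prod_{\nu\in O_{\mu_i}}\bigl(z_0+\nu(L_{\mathrm{diag}})\bigr)$; when $\mu_i\neq 0$ (the case $\mu_i=0$ gives $f_{\mu_i}=z_0$, trivially irreducible), algebraic independence of the $\theta_j$ makes distinct weights $\nu\neq\nu'$ of $O_{\mu_i}$ give $\nu(L_{\mathrm{diag}})\neq\nu'(L_{\mathrm{diag}})$, so these linear factors are distinct, and the computations in the proofs of Theorems~\ref{4.4}, \ref{4.13}, \ref{4.14} show that $\tau\in\mathscr W$ permutes the roots $\{-\nu(L_{\mathrm{diag}})\}_{\nu\in O_{\mu_i}}$ according to the action of $\mathscr W$ on $O_{\mu_i}$, which is transitive by the very definition of an orbit. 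Hence $\mathrm{Gal}(K/F)$ acts transitively on the roots of $f_{\mu_i}$, and by \cite[Proposition~6.3.7]{DAC2012} the polynomial $f_{\mu_i}$ is irreducible; combined with the factorization above this proves the theorem. The main obstacle is the correct determination of $\mathrm{Gal}(K/F)$ in type~$\ddD_n$: one must notice the Pfaffian relation to see that the relevant Galois group is the even-sign-change subgroup $\mathscr W(\ddD_n)$ rather than the full hyperoctahedral group, and it is exactly this that makes the $\mathscr W(\ddD_n)$-orbit $O_{\mu_i}$ the complete set of conjugates of each root, hence $f_{\mu_i}$ irreducible.
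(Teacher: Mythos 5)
Your proposal is correct and follows essentially the same route as the paper, which obtains the factorization by reducing to diagonalizable elements via Theorems \ref{4.9}--\ref{4.11} and then simply invokes the Galois-theoretic irreducibility argument of Theorem \ref{3.10} together with the invariance results of Theorems \ref{4.4}, \ref{4.13} and \ref{4.14}. If anything, you supply more detail than the paper does --- notably the observation that in type $\ddD_n$ the Pfaffian pins down $\theta_1\cdots\theta_n$ inside $F$ and forces $\mathrm{Gal}(K/F)$ into the even-sign-change subgroup $\mathscr{W}(\ddD_n)$, a point that is genuinely needed for the orbital factors to lie in $F$ and be irreducible there, and which the paper's one-line derivation of Theorem \ref{4.15} leaves entirely implicit.
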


\begin{example}
For the $B_2$ with a basis of Cartan subalgebra $\{H_1=diag(0, 1, -1, -1, 1),\ H_2=diag(0, 0, 2, 0, -2)\}$, it follows that
\begin{eqnarray}O_{l_1l_2}=\begin{cases}
\{l_1\beta_1,\ -l_1\beta_1+2l_1\beta_2,\ -l_1\beta_1,\ l_1\beta_1-2l_1\beta_2\}, \quad l_1\neq 0, \quad l_2=0\\
\\
\{l_2\beta_2,\ l_2\beta_1-l_2\beta_2,\ -l_2\beta_2,\ -l_2\beta_1+l_2\beta_2\}, \quad l_2\neq 0, \quad l_1=0\\
\\
\{l_1\beta_1+l_2\beta_2,\ -l_1\beta_1+(2l_1+l_2)\beta_2,\ (l_1+l_2)\beta_1-(2l_1+l_2)\beta_2,\\
(l_1+l_2)\beta_1-l_2\beta_2,\
-l_1\beta_1-l_2\beta_2,\ l_1\beta_1-(2l_1+l_2)\beta_2,\\
-(l_1+l_2)\beta_1+(2l_1+l_2)\beta_2,\
-(l_1+l_2)\beta_1+l_2\beta_2\},\quad l_1l_2\neq 0\\

\\
\{0\}, \quad l_1=l_2=0
\end{cases}
\end{eqnarray}
From (\ref{form}),
all orbital factors are obtained.

\textbf{Case 1:}

When $\mu=0$, $f_{\mu}=z_0.$

\textbf{Case 2:}

When $\mu=l_1\beta_1$,
\begin{eqnarray*}f_{\mu}&=&z_0\big(z_0^2-(\theta_1l_1)^2\big)\big(z_0^2-(\theta_2l_1)^2\big) \\
	&=&z_0\big(z_0^4-l_1^2\bar{s}_1z_0^2+l_1^2l_2^2\bar{s}_2\big).
\end{eqnarray*}

\textbf{Case 3:}

When $\mu=l_2\beta_2$,
\begin{eqnarray*}f_{\mu}&=&z_0\big(z_0^2-(\frac{\theta_1+\theta_2}{2}l_2)^2\big)\big(z_0^2-(\frac{-\theta_1+\theta_2}{2}l_2)^2\big) \\
&=&z_0\big(z_0^4-\frac{1}{2}l_2^2\bar{s}_1z_0^2+\frac{1}{16}l_2^4(\bar{s}_1^2-2\bar{s}_2)-\frac{1}{8}l_2^4\bar{s}_2\big).
\end{eqnarray*}

\textbf{Case 4:}

When $\mu=l_1\beta_1+l_2\beta_2$,
\begin{eqnarray*}f_{\mu}
=&z_0\big(z_0^2-(\theta_1l_1+\frac{\theta_1+\theta_2}{2}l_2)^2\big)\big(z_0^2-(\theta_2l_1+\frac{\theta_1+\theta_2}{2}l_2)^2\big) &\\
	&\big(z_0^2-(-\theta_1l_1+\frac{-\theta_1+\theta_2}{2}l_2)^2\big)\big(z_0^2-(\theta_2l_1+\frac{-\theta_1+\theta_2}{2}l_2)^2\big)& \\
    &=z_0\big(z_0^8+q_{l_1,l_2}^1z_0^6+q_{l_1,l_2}^2z_0^4+q_{l_1,l_2}^3z_0^2+q_{l_1,l_2}^4\big),
\end{eqnarray*}
where
\begin{alignat*}{4}
		q_{l_1,l_2}^1&=(-2l_1^2-2l_1l_2-l_2^2)\bar{s}_1,\\
		q_{l_1,l_2}^2&=(l_1^4+2l_1^3 l_2+\frac{5}{2} l_1^2 l_2^2+\frac{3}{2} l_1 l_2^3+\frac{3}{8}l_2^4)(\bar{s}_1^2-2\bar{s}_2)
		+(4l_1^4+8l_1^3 l_2+5l_1^2 l_2^2+l_1 l_2^3+\frac{1}{4}l_2^4)\bar{s}_2,\\
			q_{l_1,l_2}^3&=-\frac{1}{16} l_2^2(2l_1+l_2)^2(2l_1^2+2l_1 l_2+l_2^2)(\bar{s}_1^3-3\bar{s}_1\bar{s}_2)\\
		&-\frac{1}{16}(4l_1^2+2l_1 l_2-l_2^2)(2l_1^2+2l_1 l_2+l_2^2)(4l_1^2+6l_1 l_2+l_2^2)\bar{s}_1\bar{s}_2,\\
		q_{l_1,l_2}^4&=\frac{1}{256}l_2^4(2l_1+l_2)^4(\bar{s}_1^4+2\bar{s}_2^2-4\bar{s}_1^2\bar{s}_2)\\
		&-\frac{1}{64}l_2^2(2l_1+l_2)^2 (8l_1^4+16l_1^3 l_2+12l_1^2 l_2^2+4l_1 l_2^3+l_2^4)\bar{s}_2(\bar{s}_1^2-2\bar{s}_2)\\
		&+(l_1^8+4l_1^7 l_2+7l_1^6 l_2^2+7l_1^5 l_2^3+\frac{37}{8} l_1^4 l_2^4+\frac{9}{4} l_1^3 l_2^5+\frac{13}{16} l_1^2 l_2^6+\frac{3}{16} l_1 l_2^7+\frac{3}{128}l_2^8)\bar{s}_2^2.
		\end{alignat*}

Therefore, the results are in agreement with the Theorem \ref{4.4}.

It is noteworthy that
$$z_0^2-(\frac{\theta_1+\theta_2}{2}l_2)^2,\ \big(z_0^2-(\theta_1l_1+\frac{\theta_1+\theta_2}{2}l_2)^2\big)\big(z_0^2-(\theta_2l_1+\frac{\theta_1+\theta_2}{2}l_2)^2\big)$$
are symmetric polynomial, but not  $\mathscr{W}(\ddB_{2})$ invariant polynomials, then not orbital factors of $f_{\phi}(z_0,L)$.
\end{example}

\subsection{Characteristic polynomials for $\ddG_{2}$}

\hspace{1.3em}By the same method, we try to compute the characteristic polynomial of
the exceptional Lie algebra $\ddG_2$ within Weyl group $\mathscr{D}_6 \simeq \mathfrak{S}_3 \ltimes \mathbb{Z}_2$. We take a Chevalley basis for Cartan algebra $\mathfrak{h}$ of $\ddG_2$ as follows
	\begin{equation*}
	H_1=\left(\begin{array}{ccccccc}
		0 & 0 & 0 & 0 & 0 & 0 & 0  \\
		0 & 1 & 0 & 0 & 0 & 0 & 0  \\
		0 & 0 & -1 & 0 & 0 & 0 & 0  \\
		0 & 0 & 0 & 2 & 0 & 0 & 0  \\
		0 & 0 & 0 & 0 & 1 & 0 & 0  \\
		0 & 0 & 0 & 0 & 0 & -1 & 0  \\
		0 & 0 & 0 & 0 & 0 & 0 & -2  \\
	\end{array}\right), \quad
    H_2=\left(\begin{array}{ccccccc}
		0 & 0 & 0 & 0 & 0 & 0 & 0  \\
		0 & -1 & 0 & 0 & 0 & 0 & 0  \\
		0 & 0 & 0 & 0 & 0 & 0 & 0  \\
		0 & 0 & 0 & -1 & 0 & 0 & 0  \\
		0 & 0 & 0 & 0 & 0 & 0 & 0  \\
		0 & 0 & 0 & 0 & 0 & 1 & 0  \\
		0 & 0 & 0 & 0 & 0 & 0 & 1  \\
	\end{array}\right).	
\end{equation*}
The other elements of the basis are not detailed and readers can refer to \cite[Chapter 4.4]{RA2005}.
In general, an element of $\ddG_2$ may be written in the form
\begin{equation*}
	L=\left(\begin{array}{ccccccc}
		0 & -2z_{12} & 2z_{13} & 2z_{14} & 2z_{15} & -2z_{16} & 2z_{17}  \\
		z_{16} & z_{11}-z_{22} & z_{17} & -z_{15} & z_{25} & 0 & z_{27}  \\
		z_{15} & z_{14} & -z_{11} & -z_{34} & 0 & z_{25} & -z_{16}  \\
		z_{17} & -z_{13} & z_{43} & 2z_{11}-z_{22} & z_{16} & z_{27} & 0  \\
		z_{13} & z_{52} & 0 & -z_{12} & z_{11} & z_{17} & -z_{43}  \\
		z_{12} & 0 & z_{52} & z_{64} & z_{14} & -z_{11}+z_{22} & -z_{13}  \\
		z_{14} & z_{64} & z_{12} & 0 & z_{34} & -z_{15} & -2z_{11}+z_{22} \\
	\end{array}\right)
\end{equation*}
where $z_{ij}$ are indeterminants.
Let $L$ can be diagonalizable and have eigenvalues $\{0, \theta_1, \theta_2, \theta_3, -\theta_1, $ $-\theta_2, -\theta_3\}$ and we know that $\theta_1+\theta_2+\theta_3=0$, so
$$L\sim diag(0, \theta_1, \theta_2, \theta_3, -\theta_1, -\theta_2, -\theta_3) \sim \theta_1H_1+(2\theta_1+\theta_2)H_2=(-\theta_2-\theta_3)H_1+(\theta_1-\theta_3)H_2.$$
Then characteristic polynomial
\begin{eqnarray*}
	f_{\phi}(z_0,L)
	&=&\det\left(z_0I+(-\theta_2-\theta_3)\phi(H_1)+(\theta_1-\theta_3)\phi(H_2)\right)\\
	&=&\prod_{O_{\mu}\subset orb(V(\lambda))} (f_{\mu})^{m_\lambda(\mu)}.
\end{eqnarray*}	
Next, $f_{\mu}$ will be shown to be an invariant polynomial.
Let $\mu=l_1\beta_1+l_2\beta_2$ be a dominant weight in $\Lambda(\ddG_2)$ with $l_1, l_2\in \N$.
$\forall \tau=\gamma\ltimes\epsilon \in \mathscr{D}_6$ with $\gamma \in \mathfrak{S}_3, \epsilon \in \mathbb{Z}_2$, there is the following fact,
	\begin{eqnarray*}
	&&\tau (\mu)\big((-\theta_2-\theta_3)H_1+(\theta_1-\theta_3)H_2\big)\\
	&=&\mu \left(\tau^{-1}\big((-\theta_2-\theta_3)H_1+(\theta_1-\theta_3)H_2\big)\right)\\
	&=&\mu\left(\tau^{-1}(diag(0, \theta_3,  -\theta_1, -\theta_{2}, \theta_1,  -\theta_3, \theta_{2} )) \right) \\
	&=&\mu\left(diag(0, \epsilon\theta_{\gamma(3)},  -\epsilon\theta_{\gamma(1)}, -\epsilon\theta_{\gamma(2)}, \epsilon\theta_{\gamma(1)}, -\epsilon\theta_{\gamma(3)}, \epsilon\theta_{\gamma(2)}) \right) \\
	&=&(-\epsilon\theta_{\gamma(2)}-\epsilon\theta_{\gamma(3)})\mu(H_1)+(\epsilon\theta_{\gamma(1)}-\epsilon\theta_{\gamma(3)})\mu(H_2).\\
\end{eqnarray*}
 Then
	\begin{eqnarray*}
	f_{\mu}&=&\prod_{\tau \in \mathscr{D}_6/stab(\mu)}\left(z_0+\tau(\mu)\big((-\theta_2-\theta_3)H_1+(\theta_1-\theta_3)H_2\big)\right)\\
	&=&\prod_{\tau \in \mathscr{D}_6/stab(\mu)}\left(z_0+(-\epsilon\theta_{\gamma(2)}-\epsilon\theta_{\gamma(3)})\mu(H_1)+(\epsilon\theta_{\gamma(1)}-\epsilon\theta_{\gamma(3)})\mu(H_2)\right)\\
	&=&\prod_{\tau \in \mathscr{D}_6/stab(\mu)}\left(z_0+(-\epsilon\theta_{\gamma(2)}-\epsilon\theta_{\gamma(3)})l_1+(\epsilon\theta_{\gamma(1)}-\epsilon\theta_{\gamma(3)})l_2\right)\\
	&=&\prod_{\gamma \in \mathscr{D}_6/stab(\mu)/\mathbb{Z}_2}\left(z_0^2-\big((-\theta_{\gamma(2)}-\theta_{\gamma(3)})l_1+(\theta_{\gamma(1)}-\theta_{\gamma(3)})l_2\big)^2\right).\\
\end{eqnarray*}
Therefore $f_{\mu}$ is a symmetric polynomial and a $\mathfrak{S}_3$ invariant polynomial and can be written uniquely as an algebraic expression in $\{\bar{s}_1, \bar{s}_{2}, \bar{s}_{3}\}$ from \cite[Section 16-2]{RK2001}.

In conclusion, the following theorem is obtained.
\begin{thm}\label{1}
	Let $\mu_1, \dots, \mu_s$ be the dominant weights of irreducible $\ddG_2$-module $V$ and $k_i$ be the size of the orbit of $\mu_i$ under the $\mathscr{W}(\ddG_2)$, $i=1, \dots, s$. Then $$f_{\phi}(z_0,L)=(f_{\mu_1})^{m_\lambda(\mu_1)}\cdots (f_{\mu_s})^{m_\lambda(\mu_s)},$$
	where $f_{\mu_i}$ is an irreducible polynomials of degree $k_i=\lvert O_{\mu_i} \rvert$ obtained from the orbit of $\mu_i$.
\end{thm}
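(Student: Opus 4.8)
The plan is to run, for $\ddG_2$, the same two-step strategy used for $\mathfrak{sl}(n,\C)$ in Theorem~\ref{3.10} and for the classical types in Theorems~\ref{4.4}, \ref{4.13}, \ref{4.14}, \ref{4.15}: first produce the orbit factorization, then prove each $f_\mu$ is irreducible by a Galois argument. For the factorization I would note, as in Remark~\ref{3.4}, that the diagonalizable elements are dense in $\ddG_2$, so it suffices to treat an $L$ with distinct eigenvalues $\{0,\pm\theta_1,\pm\theta_2,\pm\theta_3\}$ and $\theta_1+\theta_2+\theta_3=0$. Using the similarity $L\sim(-\theta_2-\theta_3)H_1+(\theta_1-\theta_3)H_2=:M$ recorded above and the fact that $\phi(M)$ is diagonalizable with eigenvalues the weights $\eta\in\Pi(\lambda)$ of $V=V(\lambda)$ and multiplicities $m_\lambda(\eta)$, one gets $f_\phi(z_0,L)=\det(z_0I+\phi(M))=\prod_{\eta\in\Pi(\lambda)}\bigl(z_0+\eta(M)\bigr)^{m_\lambda(\eta)}$. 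Regrouping $\Pi(\lambda)$ into $\mathscr{W}(\ddG_2)$-orbits and using that $m_\lambda$ is constant on each orbit (as recalled after~(\ref{form2})) yields $f_\phi=\prod_{O_\mu\subset orb(V(\lambda))}(f_\mu)^{m_\lambda(\mu)}$ with $f_\mu=\prod_{\tau\in\mathscr{W}(\ddG_2)/stab(\mu)}\bigl(z_0+\tau(\mu)(M)\bigr)$; and the computation carried out in this subsection already shows the coefficients of $f_\mu$ (as a polynomial in $z_0$) lie in $\C[\bar s_1,\bar s_2,\bar s_3]$, hence in the coordinate ring of $\ddG_2$, so $f_\mu\in F[z_0]$, where $F$ denotes the field of rational functions in the entries $z_{ij}$.

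Next I would record the orbit/stabilizer combinatorics, in analogy with Lemmas~\ref{lem3.1} and \ref{stabistor}: for a dominant weight $\mu=l_1\beta_1+l_2\beta_2\in\Lambda(\ddG_2)$, $stab(\mu)$ is the parabolic subgroup generated by the simple reflections $\gamma_i$ with $l_i=0$, so $\lvert O_\mu\rvert$ is $1$, $6$, or $12$ according as both, exactly one, or neither of $l_1,l_2$ vanishes. In particular the map $\tau\,stab(\mu)\mapsto\tau(\mu)$ is a bijection $\mathscr{W}(\ddG_2)/stab(\mu)\xrightarrow{\sim}O_\mu$, and for generic $(\theta_1,\theta_2,\theta_3)$ the linear values $\tau(\mu)(M)$ are pairwise distinct; hence $f_\mu$ is separable over $F$ with $\deg f_\mu=\lvert O_\mu\rvert$, and $\mathscr{W}(\ddG_2)$ permutes its roots simply transitively through the coset action.

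For irreducibility I would imitate the argument of Theorem~\ref{3.10}. Let $K$ be the splitting field over $F$ of the characteristic polynomial $\det(z_0I-L)=z_0\prod_{i=1}^{3}(z_0^2-\theta_i^2)$ of the $7$-dimensional defining representation; its roots are $0,\pm\theta_1,\pm\theta_2,\pm\theta_3$ subject only to $\theta_1+\theta_2+\theta_3=0$, so $K=F(\theta_1,\theta_2)$ and $\Gal(K/F)$ embeds into the group of permutations of these roots fixing $0$, preserving the pairing $\theta_i\leftrightarrow-\theta_i$ and the relation $\sum\theta_i=0$, which is exactly $\mathfrak{S}_3\ltimes\Z_2\simeq\mathscr{W}(\ddG_2)$. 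Granting that this embedding is an equality (discussed below), the subfield $K_\mu\subseteq K$ generated over $F$ by the roots of $f_\mu$ is, by the Galois correspondence, the fixed field of $stab(\mu)$, so $\Gal(K/K_\mu)=stab(\mu)$, the group $\mathscr{W}(\ddG_2)$ acts transitively on the pairwise distinct roots of $f_\mu$, and by \cite[Proposition 6.3.7]{DAC2012} the polynomial $f_\mu$ is irreducible over $F$, of degree $k_i=\lvert O_{\mu}\rvert$. Combining this with the factorization of the first step completes the proof.

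The step I expect to be the real obstacle is showing that $\Gal(K/F)$ is the \emph{full} Weyl group $\mathscr{W}(\ddG_2)$ and not a proper subgroup, i.e. that the characteristic polynomial of the generic $L\in\ddG_2$ in the $7$-dimensional representation realizes the standard $\mathscr{W}(\ddG_2)$-action on $\{0,\pm\theta_1,\pm\theta_2,\pm\theta_3\}$ over $F$. Since $\C[\theta_1,\theta_2]^{\mathscr{W}(\ddG_2)}$ is a polynomial ring on two generators (of degrees $2$ and $6$) and the Chevalley restriction $\ddG_2\to\mathfrak{h}/\mathscr{W}(\ddG_2)$ is dominant, the two basic invariants remain algebraically independent over $\C$ inside $F$; the cleanest way I see to finish is via the Grothendieck simultaneous resolution, pulling the étale $\mathscr{W}(\ddG_2)$-cover $\mathfrak{h}^{\mathrm{rs}}\to\mathfrak{h}^{\mathrm{rs}}/\mathscr{W}(\ddG_2)$ back along the dominant map from the regular semisimple locus of $\ddG_2$ to obtain a connected cover, which is equivalent to $\Gal(K/F)=\mathscr{W}(\ddG_2)$; an elementary alternative is to exhibit one explicit $L_0\in\ddG_2$ whose characteristic polynomial already has Galois group all of $\mathscr{W}(\ddG_2)$ over a suitable rational base. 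The remaining ingredients — the similarity normal form and the $\mathscr{W}(\ddG_2)$-invariance of $f_\mu$ — are already in the excerpt, and the rest is a routine transcription of the $\mathfrak{sl}(n,\C)$ and classical-type arguments.
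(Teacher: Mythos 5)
Your proposal is correct and follows the same overall architecture as the paper: diagonalize the generic $L$ to $(-\theta_2-\theta_3)H_1+(\theta_1-\theta_3)H_2$, group the weights of $V(\lambda)$ into $\mathscr{W}(\ddG_2)$-orbits using the constancy of $m_\lambda$ on orbits to get $f_\phi=\prod(f_\mu)^{m_\lambda(\mu)}$, verify that the coefficients of $f_\mu$ are $\mathfrak{S}_3\ltimes\Z_2$-invariant and hence lie in $\C[\bar s_1,\bar s_2,\bar s_3]$, and then deduce irreducibility from transitivity of a Galois action on the roots. Where you genuinely diverge is on the irreducibility step: the paper's $\ddG_2$ subsection proves only the factorization and the invariance, and then asserts irreducibility by appeal to the pattern of Theorem \ref{3.10}, whose own argument simply declares $\Gal(K/F)=\mathfrak{S}_n$ for type $\ddA$ and never addresses why, for $\ddG_2$, the Galois group of the generic characteristic polynomial of the $7$-dimensional representation is the \emph{full} group $\mathfrak{S}_3\ltimes\Z_2$ rather than a proper subgroup compatible with the constraints $\theta_i\leftrightarrow-\theta_i$ and $\sum\theta_i=0$. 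You correctly isolate this as the real content and supply two viable closures (connectedness of the pullback of the \'etale $\mathscr{W}$-cover $\mathfrak{h}^{\mathrm{rs}}\to\mathfrak{h}^{\mathrm{rs}}/\mathscr{W}$ along the dominant map from the regular semisimple locus, or a single explicit $L_0$ with full Galois group plus specialization). Your version is therefore more complete than the paper's on exactly the point the paper leaves implicit; the paper's version buys brevity by reusing the type-$\ddA$ template verbatim, at the cost of not justifying the surjectivity onto the Weyl group in the $\ddG_2$ case. Your orbit-size count ($1$, $6$, $12$ according to how many of $l_1,l_2$ vanish) and the identification of $stab(\mu)$ with the parabolic generated by the $\gamma_i$ with $l_i=0$ are consistent with the paper's Lemma \ref{stabistor} and with the order-$12$ dihedral Weyl group.
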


	\begin{rem} From this paper, we see that the characteristic polynomials of the representations of a  complex simple Lie algebra have a profound meaning for the representation monoidal category of the Lie algebras.
		However, we have not presented the relevant conclusions of $F_4$, $E_6$, $E_7$, $E_8$, because the calculation method is the same as the classical Lie algebras, the computation might be more difficult for precise computing works.
		In fact, there are many interesting topics about these polynomials, such as finding the link between the coefficients  and the representations,
		how to factorize the tensor products through the resolution products of their characteristic polynomials and so on. Therefore, we need more efforts to work on these topics.
	\end{rem}

Chenyue Feng\\
Email: fcy 1572565124@163.com\\
School of Mathematics, Shandong University\\
Shanda Nanlu 27, Jinan, \\
Shandong Province, China\\
Postcode: 250100\\
Shoumin Liu\\
Email: s.liu@sdu.edu.cn\\
School of Mathematics, Shandong University\\
Shanda Nanlu 27, Jinan, \\
Shandong Province, China\\
Postcode: 250100\\
Xumin Wang\\
Email: 202320303@mail.sdu.edu.cn\\
School of Mathematics, Shandong University\\
Shanda Nanlu 27, Jinan, \\
Shandong Province, China\\
Postcode: 250100


\begin{thebibliography}{99}
\bibitem{RA2005}
R. Arenas, Constructing a Matrix Representation of the Lie Group $\ddG_2$, HMC Senior Theses. 166, 2005.

\bibitem{B2002}N. Bourbarkis,  Lie groups and Lie algebras, Chapter 4--6, Translated from the 1968 French original by Andrew Pressley,
Springer-Verlag, Berlin, 2002.

\bibitem{CMK2016}
I. Chagouel, M. Stessin, K. Zhu, Geometric spectral theory for compact operators, Trans. Amer. Math. Soc. 368(3) (2016), 1559--1582.

\bibitem{CCD2019}
Z. Chen, X. Chen, M. Ding, On the characteristic polynomial of $\mathfrak{sl}(2, \C)$, Linear Alg. Appl. 579 (2019), 237--243.

\bibitem{DAC2012}
D. A. Cox, Galois Theory, 2nd ed, Wiley-Blackwell, Hoboken, 2012.


\bibitem{Cu2000}
C. W. Curtis, Pioneers of representation theory, Bull. Amer. Math. Soc. 37 (2000), no. 3, 359-362. 

\bibitem{D1969}
R. Dedekind, Gesammelte Mathematische Werke, Vol. II, Chelsea, New York, 1969. 

\bibitem{Di1902}
A. C. Dixon, Note on the reduction of a ternary quantic to a symmetrical determinan, Cambr. Proc. 11 (1902), no. 5, 350-351.

\bibitem{Di1921}
L. E. Dickson, Determination of all general homogeneous polynomials expressible as determinants with linear elements,
Trans. Amer. Math. Soc. 22 (1921), no. 2, 167-179.

\bibitem{Di1975}
L. E. Dickson, An elementary exposition of Frobenius theory of group characters and group-determinants, Ann. Math. 4 (1902), 25-49; Mathematical Papers, Vol. II. Chelsea, New York, 1975, 737-761.

\bibitem{F1896}
F. G. Frobenius, \"uber vertauschbare Matrizen, Sitzungsberichte der K\"oniglich Preussischen,Akademie der Wissenschaften zu Berlin, (1896), 601--614.

\bibitem{FH1991}
W. Fulton, J. Harris, Representation theory: a first course, Graduate Text in Mathematics, Springer-Verlag, New York, 1991.

\bibitem{GLW2024} A. Geng, S. Liu, X. Wang, Characteristic polynomials and finite dimensional representations of simple Lie algebras, New York J. Math. 30 (2024), 24--37.

\bibitem{GW2009}
R. Goodman, N. R. Wallach, Symmetry, representations, and invariants, Vol. 255, Springer, Dordrecht, 2009.

\bibitem{GY2017} R. Grigorchuk, R. Yang, Joint spectrum and the infinite dihedral groups, Proc. Steklov Inst.Math. 297(1) (2017), 145--178.

\bibitem{GR2014}
W. He, R. Yang, Projective spectrum and kernel bundle, Sci. China Math. 57 (2014), 1--10.

\bibitem{HY2023}
W. He and R. Yang, On the unitary equivalence of compact operator tuples, Sci. China. Math.
66 (2023), 571--580.

\bibitem{HY2024}
K. Howell and R. Yang, The characteristic polynomial of projections, Lin. Alg. Appl. 680
(2024), 170--182.


\bibitem{HY2018}
Z. Hu, R. Yang, On the charactersitic polynomials of multiparameter pencils,  Linear Alg. Appl. 558 (2018), 250--263.

\bibitem{HZ2019}
Z. Hu, P. Zhang, Determinant and charactreristic polynomials of Lie algebras, Linear Alg. Appl. 563 (2019), 426--439.

\bibitem{H2021}
Z. Hu, Eigenvalues and eigenvectors of a class of irreducible tridiagonal matrices, Linear Alg. Appl. 619 (2021), 328--337.

\bibitem{H1972}
J. E. Humphreys, Introduction to Lie algebras and representation theory, Graduate Text in Mathematics, Springer-Verlag, New York-Berlin, 1972.

\bibitem{JL2022}
T. Jiang, S. Liu, Characteristic polynomials and finitely dimensional representations of $\mathfrak{sl}(2, \C)$, Linear Alg. Appl. 647 (2022), 78--88.

\bibitem{KKSW}K. Korkeathikhun, B. Khuhirun, S. Sriwongsa, K. Wiboonton, More on characteristic polynomials of Lie algebras, J. Algebra. 643 (2024), 294--310.

\bibitem{RK2001}
Richard. M. Kane, Reflection groups and invariant theory, Vol. 5, Springer, New York, 2001.


\bibitem{KY2021}
F. Azari Key, R. Yang, Spectral invariants for finite dimensional Lie algebras, Linear Alg. Appl. 611 (2021), 148--170.

\bibitem{K2002}
A. Knapp, Lie groups beyond an introdution, second edition,  Birkh\"ause, Boston-Basel-Berlin, 2002.

\bibitem{Y2024}
R. Yang, A spectral theory of noncommuting operators, Springer, 2024.

\bibitem{Y2009}
R. Yang, Projective spectrum in Banach algebras, J. Topo. Anal. 1(3) (2009), 289--306.



\end{thebibliography}
\end{document}